\numberwithin{equation}{section}
\DeclareMathOperator{\diver}{\mathrm{div}}
\newtheorem{thm}{Theorem}[section]
\newtheorem{lem}{Lemma}[section]
\newtheorem{cor}{Corollary}[section]
\newtheorem{prop}{Proposition}[section]
\newcommand{\beq}{\begin{eqnarray}}
\newcommand{\eeq}{\end{eqnarray}}
\newcommand{\beqno}{\begin{eqnarray*}}
\newcommand{\eeqno}{\end{eqnarray*}}
\newcommand{\be}{\begin{equation}}
\newcommand{\ee}{\end{equation}}
\theoremstyle{definition}
\newtheorem{remark}{Remark}[section]
\begin{document}
%\switchlinenumbers
\title{\Large On the initial-boundary value problem for the 2D partially dissipative Oldroyd-B model: global well-posedness and large time stability}
\author[1]{\rm Zhenrong Nong}
\author[2]{\rm Yinghui Wang\footnote{Corresponding author.\\ $\quad\quad\quad$ E-mail addresses: yhwangmath@hunnu.edu.cn, yhwangmath@163.com (Y. Wang).}}
\author[3]{\rm Huancheng Yao}
\author[4]{\rm Shihao Zhang}
\affil[1,4]{\footnotesize School of Mathematics and Statistics, Hunan Normal University, Changsha, Hunan 410081, P. R. China}
\affil[2]{\footnotesize MOE-LCSM, School of Mathematics and Statistics, Hunan Normal University, Changsha, Hunan 410081, P. R. China}
\affil[3]{\footnotesize College of Mathematics and Informatics, South China Agricultural University, Guangzhou, Guangdong 510642, P. R. China}
\date{}
\maketitle
\begin{abstract}
This work studies the global well-posedness of the Oldroyd-B model with anisotropic viscosity. While global existence and uniqueness of strong solutions for the fully dissipative Oldroyd-B model were established in [Constantin-Kliegl, Arch. Ration. Mech. Anal., 206 (2012), 725--740], under $H^2(\mathbb{R}^2)$ initial data,
the horizontally viscous counterpart—where dissipation of velocity acts only along the horizontal direction—remains  unexplored.  
We establish the global existence and uniqueness of strong solutions to the initial-boundary value problem for the horizontally viscous Oldroyd-B model  with arbitrary large initial data in the vertical strip domain $[0,1]\times\mathbb{R}$ and the periodic channel $[0,1]\times\mathcal{T}$, where $\mathcal{T}$ represents the one-dimensional torus.    
To the best of our knowledge, this work provides the first rigorous proof of global-in-time existence and uniqueness of strong solutions for the partially dissipative Oldroyd-B model. 
In addition, we investigate the long-time asymptotic behavior of solutions for small initial data in the periodic channel $[0,1]\times\mathcal{T}$. 
Our results extend the current understanding of viscoelastic fluid dynamics under partial dissipation constraints.
\vspace{4mm}

{\textbf{Keyword:} Well-posedness; Oldroyd-B model; horizontal viscosity; long-time behavior; partial dissipation}\\

{\textbf{2020 Mathematics Subject Classification:} 35Q35, 76A10, 76A05, 76B03}
\end{abstract}

\section{Introduction}
The Oldroyd-B model serves as a fundamental framework for characterizing viscoelastic fluid dynamics, as comprehensively discussed in \cite{Bird-Armstrong-Hassager19871,Bird-Armstrong-Hassager19872}.  We begin by the following generalized two-dimensional Oldroyd-B system, which describes the motion of dilute polymeric fluids (see \cite{Barrett_Lu_Suli_2017,Constantin-Kliegl2012,La_2020} for a detailed illustration of this model):
\begin{equation}\label{EQ-t2h} 
\begin{cases}
 \partial_{t}u+u\cdot\nabla u+\nabla P-\mu\partial^2_x u=\diver\mathbb{T}, \\
 \partial_{t}\eta+u\cdot\nabla\eta=\varepsilon\Delta\eta,\\
 \partial_{t}\mathbb{T}+u\cdot\nabla\mathbb{T}-(\nabla u\mathbb{T}+\mathbb{T}\nabla^\top u)=\varepsilon\Delta\mathbb{T}-2\kappa(\mathbb{T}-\eta\mathbb{I}), \\
 \nabla\cdot u=0, 
 \end{cases} 
\end{equation}
where $u$ denotes the fluid velocity, $\mathbb{T}$ is the symmetric extra stress tensor (a  $2\times 2$   matrix), $P$ is the  pressure, $\eta$ is the density of suspended polymer, $\mu$ denotes the viscosity coefficient, $\varepsilon$ is the center-of-mass diffusion coefficient, $\kappa$ is the relaxation parameter. And, $\varepsilon,~\mu, ~\kappa$ are positive constants. By introducing the modified stress tensor $\tau:=\mathbb{T}-\eta\mathbb{I}$ with $\mathbb{I}$ being a 2 $\times$ 2 unit matrix, one can derive an equivalent system of \eqref{EQ-t2h} as follows:
\begin{equation}\label{EQ-t1h} 
\begin{cases}
 \partial_{t}u+u\cdot\nabla u+\nabla P-\mu\partial^2_x u=\diver \tau, \\
 \partial_{t}\eta+u\cdot\nabla\eta=\varepsilon\Delta\eta,\\
 \partial_{t}\tau+u\cdot\nabla\tau-(\nabla u\tau+\tau\nabla^\top u)+2\kappa\tau-\varepsilon\Delta\tau=2\eta\mathbb{D}(u), \\
 \nabla\cdot u=0, 
 \end{cases} 
\end{equation}
where $\mathbb{D}(u):=(\nabla u+\nabla^\top u)/2$ is the symmetric part of velocity gradient. In the present paper, we will establish  the global well-posedness for the initial-boundary problem of the Oldroyd-B model \eqref{EQ-t1h} over two distinct domains:  
 $[0,1]\times\mathbb{R}$ and $[0,1]\times\mathcal{T}$, where $\mathcal{T}$ represents the one-dimensional periodic torus. More specifically,  we equip system \eqref{EQ-t1h} with the initial value conditions: 
     \begin{gather} 
u(x,y,0)=u_0(x,y),~\tau(x,y,0)=\tau_0(x,y),~\eta(x,y,0)=\eta_0(x,y),
    \end{gather}
    for $(x,y) \in [0,1]\times\mathbb{R}$ or $[0,1]\times\mathcal{T}$;
and the boundary value conditions:
    \begin{gather}
                u(0,y,t) = u(1,y,t),~\partial_{x}\eta(0,y,t) = \partial_{x}\eta(1,y,t) = 0,~\partial_{x}\tau(0,y,t) = \partial_{x}\tau(1,y,t) = 0, \label{OB_as_boundary_1}
    \end{gather}
     for $(y,t) \in \mathbb{R}\times \mathbb{R}_+$ or $\mathcal{T}\times \mathbb{R}_+ $.
When analyzing the problem in periodic channel $[0,1]\times\mathcal{T}$, system \eqref{EQ-t1h} is also supplemented with the periodic boundary conditions in  $y$-direction, in addition to  the boundary conditions \eqref{OB_as_boundary_1}. 

We remark that the diffusive term 
 $-\varepsilon\Delta\tau$ in \eqref{EQ-t1h}$_3$ is introduced to account for the shear and vorticity banding phenomena observed in various studies, cf. \cite{Bhave2,Cates_2006,Dhont_2008,El-Kareh-Leal1989, Malek_etal_2018}. However, the diffusion coefficient  $\varepsilon$ is significantly smaller in magnitude compared to other relevant parameters (cf. \cite{Bhave1}). Hence, in the early works on Oldroyd-B type models, the diffusive term $ -\varepsilon \Delta \tau$  is usually neglected. In consideration of mathematics, the case $\varepsilon>0$ (the diffusive model) and the case $\varepsilon = 0$ (the non-diffusive model) are significantly different. More specifically, the present of diffusive term $ -\varepsilon \Delta \tau$ in \eqref{EQ-t1h}$_3$ makes the equation of $\tau$ become parabolic PDEs, which may cause more stability than the hyperbolic one (i.e., the case $\varepsilon = 0$). See \cite{Barrett_Boyaval_2011,Barrett_Lu_Suli_2017,Constantin-Kliegl2012,Elgindi_Liu_2015,Elgindi_Rousset_2015,Huang_Wang_Wen_Zi_2022,Liu_Wang_Wen_2023,Wang_Wen_2024} for the mathematical analysis of the diffusive Oldroyd-B models. Moreover, the only horizontally viscous setting in \eqref{EQ-t1h}$_3$ is due to the studies of a lot of real fluids such as turbulent flows in
Ekman layers (cf. \cite{Pedlosky_1987}). There are  extensive  works on the mathematical analysis of horizontally viscous Navier-Stokes equations, magnetohydrodynamics equations and Boussinesq equations, see the recent advances in \cite{Cheng_Ji_Tian_Wu_2025,Wu_Zhu_2021} and the reference therein for comprehensive discussions.  In contrast, the horizontally viscous Oldroyd-B model remains significantly understudied. To the authors' best knowledge, only \cite{Feng-Wang-Wu2023} has achieved progress in this direction, establishing the existence of small strong solutions via sophisticated energy methods. Crucially, no rigorous results currently exist for the well-posedness of this system with arbitrarily large initial data. The main goal of this article is to give a positive answer to this problem. \\[0.5mm]

 Let us give a brief overview of some relevant works on the related models.\\[1mm]
 \noindent{\bfseries The non-diffusive model}
 
 In  \cite{Oldroyd_1950},
 the following  non-diffusive model was introduced by  Oldroyd to describe the motion of incompressible viscous fluids with significant elastic effects:
     \begin{equation}\label{OB_incom_classical}
        \begin{cases}
        u_t+u\cdot\nabla u +\nabla P -\mu \Delta u
        =\mathrm{div}  \tau,\\
       \tau_t+u\cdot \nabla \tau-(\nabla u\tau+\tau\nabla^\top u)+ \gamma \tau
         =  k  \mathbb{D}u,\\
        \mathrm{div} u = 0.
        \end{cases}
    \end{equation}
    Indeed, \eqref{OB_incom_classical}  represents the full viscous version of \eqref{EQ-t1h}.
    Some significant research progress has been made for the non-diffusive model (\ref{OB_incom_classical}) in recent years. For the Dirichlet problem in two-dimensional and three-dimensional bounded domain, 
Guillop\'{e} and Saut in \cite{Guillope-Saut1990}  established the local and global well-posedness for small initial data in the $H^s(s>2)$ framework. Also see the works of
 Fern\'{a}ndez-Cara,  Guill\'{e}n and  Ortega in \cite{ Fernandez-Guillen-Ortega1998} for the results in  $W^{1,p}(p>3)$, and Chemin and Masmoudi in \cite{Chemin-Masmoudi2001} for the results in critical Besov spaces. The results in \cite{Guillope-Saut1990} and \cite{Chemin-Masmoudi2001} were further advanced by Molinet and Talhouk in \cite{Molinet-Talhouk2004} and Zi, Fang and Zhang in \cite{Zi-Fang-Zhang2014} without the smallness restriction on the coupling constant (i.e., $k$ in \eqref{OB_incom_classical}). 
For the three-dimensional exterior domains problem, Hieber, Naito and Shibata in \cite{Hieber-Naito-Shibata2012} obtained the global well-posedness of small strong solutions with a small coupling constant. Fang, Hieber and Zi in \cite{Fang-Hieber-Zi2013} later generalized this result to handle arbitrary coupling constants. For weak solutions,  Lions and Masmoudi in \cite{Lions_Masmoudi_2000} proved global existence for weak solutions of two-dimensional and three-dimensional corotational Oldroyd-B model, but the uniqueness remains open in 3-D. See the works by Chemin and Masmoudi in \cite{Chemin-Masmoudi2001}, Lei, Masmoudi and Zhou in \cite{Lei_Masmoudi_Zhou_2010}, Sun and Zhang in \cite{Sun-Zhang2011}, for the blow-up criteria of strong solutions. When considering the long-time behavior of solutions, the works of Hieber, Wen and Zi in \cite{Hieber-Wen-Zi2019}, Huang, Wang, Wen and Zi in \cite{Huang_Wang_Wen_Zi_2022} established the optimal polynomial decays of the strong solutions for   Cauchy problem in $\mathbb{R}^3$. The 2-D case was proved by Chen and Zhu in \cite{Chen-Zhu2023}. The vanishing viscosity limit was
 studied by Zi in \cite{Zi_2021} for 3-D Cauchy problem. One can also refer to \cite{Lei_2006,Liu_Lu_Wen_2021} for the works on the compressible counterpart of \eqref{OB_incom_classical}.\\[1mm]
\noindent{\bfseries The diffusive model ($\varepsilon>0$ and $-\mu\partial_{xx}$ is replaced by $-\mu\Delta$ in  \eqref{EQ-t1h})}
      
 The existence of global-in-time weak solutions was obtained by Barrett and Boyaval in \cite{Barrett_Boyaval_2011} for the 2-D case with positive constant $\eta$. For the 3-D case and some generalization, see the recent work by Bathory,  Bul\'{i}\v{c}ek and  M\'{a}lek in \cite{Bathory-Bulicek-Malek2021}.  Constantin and Kliegl in \cite{Constantin-Kliegl2012} established global existence and uniqueness of strong solution.  For the inviscid ($\mu = 0$) and constant $\eta$ case,  Elgindi  obtained the global well-posedness of strong  solution provided that the initial data are small enough. Later on, Elgindi and Liu in \cite{Elgindi_Liu_2015} extended the result in \cite{Elgindi_Rousset_2015} to the three-dimensional case. Huang, Wang, Wen and Zi in \cite{Huang_Wang_Wen_Zi_2022} and Wang in \cite{Wang_2023} established the optimal polynomial time decay rates of the strong solutions for Cauchy problem in $\mathbb{R}^3$. Recently, the relation of the diffusive model and non-diffusive model was studied by Wang and Wen in \cite{Wang_Wen_2024} for the 2-D initial-boundary  value problem with boundary layers. See also the works   \cite{Constantin-Wu-Zhao-Zhu2021,Dai-Tan-Wu2023,Wang-Wu-Xu-Zhong2022} and the reference therein for fractional dissipation problem.

\subsection{Notations} 
In the present paper, some standard notations are used.

(1) $C$ denotes a  generic positive constant which may depend on $\varepsilon, ~\kappa,~\mu,$ but is independent of temporal variable $t$.

(2) For scalar function $\eta$, vector function $u$ and matrix-valued function $\tau$,  we use the notations that 
$$(\nabla\eta)_{i}=\partial_{i}\eta,\; (\nabla u)_{ij}=\partial_{j}u_{i},\; \diver u=\partial_{i}u_{i},\; (\tau)_{ij}=\tau_{ij},\; (\nabla\tau)_{nml}=\partial_{l}\tau_{nm}.$$

(3) Let $X$ be a Banach space. Then, for constant $a,\;b,\;p$ with $-\infty \leq a< b \leq \infty$ and $1\leq p\leq\infty$,  $L^{p}(a,b;X)$ denotes the $L^{p}$-Lebesgue integrable function space from $[a,b]$ to $X$. Then its norm can be defined as
\begin{equation*} 
\|\nu\|_{L^{p}(a,b;X)}= \begin{cases} (\int^b_a{\|\nu\|^{p}_{X}}\mathrm{d}x)^{\frac{1}{p}},&~~\mathrm{for}~~1\leq p<\infty,\\[2mm]
 \text{ess}\sup_{t\in[a,b]}\|\nu\|_{X},&~~\mathrm{for}~~p=\infty.\\
\end{cases} 
\end{equation*}

\subsection{Main results}
The first result of this paper is  the global in time well-posedness for the initial-boundary value problem \eqref{EQ-t1h}--\eqref{OB_as_boundary_1}.  Let $\Omega$ be either $[0,1]\times\mathbb{R}$ or $[0,1]\times\mathcal{T}$, we have the following result.

\begin{thm}\label{thm4}
Assume that $u_0\in H^2(\Omega)$, $\eta_0$, $\tau_0 = \mathbb{T}_0 - \eta_0\mathbb{I}\in L^{1}(\Omega)\cap H^2(\Omega)$ with  $\diver u=0$, $\eta_0$ non-negative, and $\mathbb{T}_0$ non-negative definite. For arbitrary given $T>0$,   problem \eqref{EQ-t1h}--\eqref{OB_as_boundary_1} admits a unique strong solution satisfying 
\begin{equation*}
\begin{aligned}
&u\in L^\infty(0,T;H^2(\Omega)),\;\;\partial_t u\in L^\infty(0,T;L^2(\Omega))\cap L^2(0,T;H^1(\Omega)), \\
&\partial_x u\in L^2(0,T;H^2(\Omega)), \;\; \eta,\tau \in L^\infty(0,T;L^1(\Omega)\cap H^2(\Omega))\cap L^2(0,T;H^3(\Omega)),\\ 
&\partial_t\eta,\partial_t\tau\in L^\infty (0,T;L^2(\Omega))\cap L^2(0,T;H^1(\Omega)).
\end{aligned}
\end{equation*} 
\end{thm}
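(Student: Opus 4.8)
The plan is to first construct a local-in-time strong solution and then establish global a priori bounds in the norms of the theorem that allow continuation to any $T>0$. For the local theory I would linearise \eqref{EQ-t1h} and run a Galerkin/fixed-point iteration: because $\eta$ and $\tau$ carry the full diffusion $\varepsilon\Delta$ while $u$ carries the horizontal diffusion $-\mu\partial_x^2$, each linearised equation is parabolic and the iteration contracts on a short interval in the stated energy class. Uniqueness follows from an $L^2$ estimate for the difference of two solutions, the maximum-principle bound on $\eta$ supplying the coefficient control needed for the coupling terms.

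The heart of the matter is the global a priori estimate, which I would organise as a hierarchy. Testing the momentum, stress, and density equations in \eqref{EQ-t1h} against $u$, $\tau$, $\eta$ gives the basic identity controlling $\|u\|_{L^2}^2+\|\tau\|_{L^2}^2+\|\eta\|_{L^2}^2$ and the dissipation $\int_0^t\!\bigl(\mu\|\partial_x u\|_{L^2}^2+\varepsilon\|\nabla\tau\|_{L^2}^2+\varepsilon\|\nabla\eta\|_{L^2}^2+\kappa\|\tau\|_{L^2}^2\bigr)$; the coupling integrals $\int(\nabla u\,\tau+\tau\nabla^\top u):\tau$ and $\int\eta\,\mathbb{D}(u):\tau$ are absorbed with the anisotropic inequality $\int|fgh|\le C\|f\|_{L^2}^{1/2}\|\partial_y f\|_{L^2}^{1/2}\|g\|_{L^2}^{1/2}\|\partial_x g\|_{L^2}^{1/2}\|h\|_{L^2}$, which demands only a horizontal derivative of one factor and a vertical derivative of another. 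Since the $\eta$-equation in \eqref{EQ-t1h} is a pure advection--diffusion equation, the maximum principle yields $0\le\eta\le\|\eta_0\|_{L^\infty}$ and preserves the $L^1$ bound, so $\|\eta\|_{L^\infty}$ is available throughout as a coefficient for the source $2\eta\,\mathbb{D}(u)$. Exploiting the full diffusion of $(\eta,\tau)$, I would then propagate their $H^1$, $H^2$ and $H^3$ norms, the source terms closing against the velocity norms via the same inequality.

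The decisive step is regaining the velocity gradient with no vertical dissipation. Passing to the vorticity $\omega=\partial_x u_2-\partial_y u_1$, one has $\partial_t\omega+u\cdot\nabla\omega-\mu\partial_x^2\omega=\mathrm{curl}\,\diver\tau$; the transport term integrates to zero and the right-hand side, though it contains $\partial_y^2\tau_{12}$, is bounded in $L^2$ by $C\|\tau\|_{H^2}$, which the diffusive $\tau$-estimates already control, so Gronwall gives $\omega\in L^\infty(0,T;L^2)$. Because $\partial_y u_1=\partial_x u_2-\omega$ and $\partial_y u_2=-\partial_x u_1$ by incompressibility, every first-order derivative of $u$ is now controlled---the single genuinely missing component $\partial_y u_1$ being recovered from the elastic stress through $\omega$. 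For the full $H^2$ bound I would estimate $\partial_t u$ by differentiating the momentum equation in time and testing with $\partial_t u$ (the worst term $\int\partial_t u\cdot\nabla u\cdot\partial_t u$ is again split by the anisotropic inequality using the now-available $\nabla u$), read off $\partial_x^2 u$ from $\mu\partial_x^2 u=-\partial_t u-u\cdot\nabla u-\nabla P+\diver\tau$, control the remaining second derivatives carrying a horizontal derivative by a second-order horizontal-dissipation estimate, recover $\partial_y^2 u_2=-\partial_x\partial_y u_1$ from incompressibility, and obtain the last component $\partial_y^2 u_1=\partial_x\partial_y u_2-\partial_y\omega$ after propagating one further derivative of $\omega$; the pressure is supplied throughout by the elliptic problem $\Delta P=-\diver(u\cdot\nabla u)+\diver\diver\tau$ under \eqref{OB_as_boundary_1}.

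I expect the principal obstacle to be closing this coupled second-order estimate for arbitrarily large data. The velocity regularity feeds the source $2\eta\,\mathbb{D}(u)$ of the stress equation while $\diver\tau$ drives the momentum equation, so the $H^2$ bounds for $u$ and for $(\eta,\tau)$ must be shown to close simultaneously with no smallness hypothesis; in particular the $\partial_y\omega$ (equivalently $\partial_y^2 u_1$) estimate produces a transport term $\int\partial_y u\cdot\nabla\omega\,\partial_y\omega$ whose $\partial_y u_1$ factor can only be balanced against the horizontal dissipation $\mu\|\partial_x\partial_y\omega\|_{L^2}^2$ and the higher stress norm $\|\tau\|_{H^3}\in L^2(0,T)$. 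The subtle point is that the sole compensation for the absent vertical velocity dissipation is the full diffusion of the elastic stress transmitted through the vorticity, and making this transmission quantitatively sufficient---so that all anisotropic estimates balance $\partial_x$- against $\partial_y$-derivatives against the available dissipation---is where the 2D structure, incompressibility, and the precise Oldroyd-B coupling must be used together.
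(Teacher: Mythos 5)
Your proposal has a genuine gap at its foundation: the zeroth-order energy estimate does not close for \emph{large} data the way you describe. If you test the stress equation with $\tau$ and try to absorb the stretching term $\int(\nabla u\,\tau+\tau\nabla^\top u):\tau\,\mathrm{d}x\mathrm{d}y$ by the anisotropic inequality, the resulting Gronwall factor is either $\|\nabla u\|_{L^2}^2$ (not time-integrable, since only $\partial_x u$ is dissipated) or, after integrating by parts, $\|u\|_{L^2}^2\|\partial_x u\|_{L^2}^2$ — but the time-integrability of the latter is exactly what you are trying to prove, and the forcing $\int\mathrm{div}\,\tau\cdot u$ in the velocity estimate couples back to $\|\nabla\tau\|_{L^2}^2$. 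Schematically you get $X(t)+\int_0^t D\lesssim X_0+\int_0^t X+(\sup X)^2\int_0^t D$, which closes only when $\sup X$ is small: this is precisely the small-data argument (it is what the paper runs in Section 4 for the stability theorem), not a large-data one. The idea you are missing — and the reason the theorem hypothesizes that $\mathbb{T}_0$ is \emph{non-negative definite}, an assumption your proposal never uses — is the Constantin--Kliegl mechanism that the paper adopts: a maximum principle for the system satisfied by $(\Gamma_1,\Gamma_2,\Gamma_3)=(\tfrac12(\mathbb{T}_{11}-\mathbb{T}_{22}),\mathbb{T}_{12},\mathrm{tr}\,\mathbb{T})$ propagates non-negative definiteness of $\mathbb{T}$ (Lemma \ref{3.1}), whence $\|\mathbb{T}\|_{L^1}\le\|\mathrm{tr}\,\mathbb{T}\|_{L^1}$ (Corollary \ref{cor1}); and taking the trace of the $\mathbb{T}$-equation produces the \emph{exact} cancellation
\begin{equation*}
\int \mathrm{tr}\big(\nabla u\,\mathbb{T}+\mathbb{T}\nabla^\top u\big)\,\mathrm{d}x\mathrm{d}y=-2\int \mathrm{div}\,\mathbb{T}\cdot u\,\mathrm{d}x\mathrm{d}y,
\end{equation*}
so that $\|\mathrm{tr}\,\mathbb{T}\|_{L^1}+\|u\|_{L^2}^2+\|\eta\|_{L^2}^2$ obeys a linear energy identity \eqref{eq_T_L1} with no quadratic Gronwall term at all. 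Only with this $L^1$-trace bound in hand (giving $u\in L^\infty L^2$, $\partial_x u\in L^2L^2$ unconditionally) do the subsequent $L^2$ and higher-order estimates of $\tau$ close by Gronwall, because their factors $\|u\|_{L^2}^2\|\partial_x u\|_{L^2}^2$ are then known to be time-integrable. Your maximum principle for $\eta$ alone does not substitute for this.

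A secondary structural problem: your velocity-gradient recovery via the vorticity is circular as stated — the bound $\|\omega\|_{L^\infty L^2}$ requires $\tau\in L^2(0,T;H^2)$, but your "diffusive $\tau$-estimates" need $\nabla u$ (hence $\omega$) to control the stretching term $\int(\nabla u\,\tau+\tau\nabla^\top u):\Delta\tau$; moreover, on $[0,1]\times\mathbb{R}$ with the boundary conditions \eqref{OB_as_boundary_1} the vorticity formulation raises boundary-condition issues you do not address. The paper avoids both difficulties by running a single \emph{coupled} estimate — momentum tested against $\partial_t u$ and $-2\partial_y^2u$ simultaneously with the stress tested against $-\Delta\tau$ (Lemma \ref{lem5.2}), and analogously at second order (Lemma \ref{lem5.4}) — so that the cross terms like $\int\mathrm{div}\,\tau\cdot\partial_y^2u$ are absorbed by $\varepsilon\|\nabla^2\tau\|_{L^2}^2$ on the spot, and all Gronwall factors are integrable thanks to the already-established lower-order bounds $J_0,J_1$. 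Your closing remarks correctly identify that the estimates must close simultaneously and that the stress diffusion must compensate the missing vertical dissipation, but without the positive-definiteness/trace cancellation your hierarchy never gets off the ground for arbitrarily large data.
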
 
\begin{remark}
By synthesizing the analytical framework from \cite{Constantin-Kliegl2012} with the methodology developed herein, we may extend Theorem \ref{thm4} to the case that the diffusion term $\varepsilon\Delta \eta$ in \eqref{EQ-t2h}$_2$ is omitted.
  Here, the retention of this diffusion term in  \eqref{EQ-t2h}$_2$ is  motivated by physical considerations  (as outlined in \cite{Barrett_Boyaval_2011}).
\end{remark}
\begin{remark}
     The foundational work of Constantin and Kliegl \cite{Constantin-Kliegl2012} established the global well-posedness of strong solutions for the fully viscous Oldroyd-B model of  two-dimensional Cauchy problem over $\mathbb{R}^2$. In this work, we focus on a physically relevant initial-boundary value problem. Specifically,  Theorem \ref{thm4} extends the results of  \cite{Constantin-Kliegl2012} to the initial boundary value problem and to the problem with only horizontal viscosity. 
\end{remark}
The long-time behavior of the solution $(u,\tau,\eta)$ obtained in Theorem \ref{thm4} is a very interesting problem.
The recent work \cite{Dong-Wu-Xu2021} reveal that even in the degenerate case $\tau = 0$ and $\eta = 0$ (corresponding to  the Navier-Stokes equation with only  horizontal viscosity),  rigorous results on long-time asymptotic stability has only been established under two fundamental requirements: (i) periodic strip domain $\mathcal{T}\times \mathbb{R}$, (ii) sufficiently small initial perturbations. 

Next, to study the long-time stability of the solution, we focus  on the case that $\Omega = [0,1]\times\mathcal{T}$.\footnote{The non-integrability of  $1$ over $[0,1]\times \mathbb{R}$  necessitates restricting our long-time stability analysis to compact domains. Our study therefore focuses on the case that  $\Omega = [0,1]\times\mathcal{T}$.}
Without loss of generality, assume  that 
$$
\int_{[0,1]\times\mathcal{T}}{\eta_0}\mathrm{d}x\mathrm{d}y=1.
$$
Then, we will  characterize the long-time asymptotic behavior of solutions around $\displaystyle (0,0, 1)$.  Letting $\bar{\eta} := \eta - 1$,
    then, the problem \eqref{EQ-t1h}--\eqref{OB_as_boundary_1} are equivalent to 
  
\begin{equation}\label{EQ-t4h} 
\begin{cases}
 \partial_{t}u+u\cdot\nabla u+\nabla P-\mu\partial^2_x u=\mathrm{div} \tau, \\
 \partial_{t}\bar{\eta}+u\cdot\nabla\bar{\eta}=\varepsilon\Delta\bar{\eta},\\
 \partial_{t}\tau+u\cdot\nabla\tau-(\nabla u\tau+\tau\nabla^\top u)+2\kappa\tau-\varepsilon\Delta\tau=2\bar{\eta}\mathbb{D}(u)+2\mathbb{D}(u), \\
 \nabla\cdot u=0,\\ 
 (u,\tau,\bar{\eta})|_{t=0} = (u_0,\tau_0,\eta_0-1),
 \end{cases} 
\end{equation} 
with the  boundary conditions that \begin{equation}\label{boundary}
\begin{aligned}
u(0,y,t)=u(1,y,t)=0,~\partial_x\tau(0,y,t)=\partial_x\tau(1,y,t)=0,~\partial_x\bar{\eta}(0,y,t)=\partial_x\bar{\eta}(1,y,t)=0,
\end{aligned}
\end{equation}
over $[0,1]\times\mathcal{T}$. For the solution of \eqref{EQ-t4h}--\eqref{boundary}, we have the following {\it a priori} estimates.
\begin{thm}\label{thm5}
Under the assumptions of Theorem \ref{thm4}, let $(u,\tau,\bar{\eta})$ be the solution of  \eqref{EQ-t4h}--\eqref{boundary} over $[0,T]$. Then, there exists a sufficiently small constant $\delta>0$ such that  the solutions of \eqref{EQ-t4h}--\eqref{boundary} satisfying
\begin{align*}
&\|(u,\bar{\eta},\tau)\|^{2}_{H^{1}}+\int^t_0{\|(\bar{\eta},\tau)\|^{2}_{H^{2}}}\mathrm{d}s
+\int^t_0{\|\partial_t u\|^{2}_{L^{2}}}\mathrm{d}s
+\int^t_0{\|(u,\partial_x u,\partial_x \partial_y u)\|^{2}_{L^{2}}}\mathrm{d}s \\
\leq\,& C\|(u_0,\bar{\eta}_0,\tau_0)\|^{2}_{H^{1}},
\end{align*}
for any $t\in [0,T]$, provided that
$$\|(u_0,\bar{\eta}_0,\tau_0)\|_{H^{1}([0,1]\times\mathcal{T})}\leq\delta.$$
\end{thm}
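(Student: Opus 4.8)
The plan is to prove the inequality as an \emph{a priori} estimate: assuming the strong solution from Theorem \ref{thm4} already exists on $[0,T]$, I would derive a closed differential inequality for a suitably weighted $H^1$ energy and then run a continuity (bootstrap) argument that propagates the smallness of the data. Concretely I would work with
\[
E(t)=\|u\|_{H^1}^2+\tfrac12\|\tau\|_{H^1}^2+\|\bar\eta\|_{H^1}^2,\qquad
D(t)=\|(\bar\eta,\tau)\|_{H^2}^2+\|\partial_t u\|_{L^2}^2+\|(u,\partial_x u,\partial_x\partial_y u)\|_{L^2}^2,
\]
and aim for a bound of the form $\frac{d}{dt}E+c\,D\le C\sqrt{E}\,D$. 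Integrating and using a standard continuity argument, if $E(0)\le\delta^2$ with $\delta$ small enough to beat $C$, then $C\sqrt{E}\,D$ is absorbed by $\tfrac{c}{2}D$, yielding $\sup_{[0,t]}E+\int_0^tD\le CE(0)$, which is precisely the claim with a constant independent of $t$.

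The linear skeleton rests on the coupling structure together with Poincaré inequalities. Testing the $u$-equation against $u$ and the $\tau$-equation against $\tfrac12\tau$ produces the exact cancellation $\int\diver\tau\cdot u+\int\mathbb{D}(u):\tau=0$ (using symmetry of $\tau$ and $\diver u=0$), leaving the dissipative terms $\mu\|\partial_x u\|_{L^2}^2$, $\kappa\|\tau\|_{L^2}^2$, $\tfrac{\varepsilon}{2}\|\nabla\tau\|_{L^2}^2$, while the $\bar\eta$-equation gives $\varepsilon\|\nabla\bar\eta\|_{L^2}^2$. Differentiating in $x$ and $y$ and repeating the weighted testing promotes, via the full Laplacian dissipation of $\tau$ and $\bar\eta$, the bound $\|(\bar\eta,\tau)\|_{H^2}^2$; differentiating in $y$ and exploiting only the horizontal viscosity $\mu\partial_x^2$ produces the mixed dissipation $\mu\|\partial_x\partial_y u\|_{L^2}^2$. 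The decisive structural point is that, since $u=0$ on $x\in\{0,1\}$, also $\partial_y u=0$ there, so Poincaré in the confined variable gives $\|u\|_{L^2}\le C\|\partial_x u\|_{L^2}$ and $\|\partial_y u\|_{L^2}\le C\|\partial_x\partial_y u\|_{L^2}$; hence the full $\|u\|_{H^1}^2$ is in fact controlled by $D$, even though no vertical viscosity is present. Likewise $\bar\eta$ has zero, conserved mean (its integral is preserved by $\diver u=0$ and the Neumann/periodic conditions), so Poincaré upgrades $\|\nabla\bar\eta\|$ to control $\|\bar\eta\|$. Finally, testing the $u$-equation against $\partial_t u$ (the pressure drops out since $\diver\partial_t u=0$ and $\partial_t u=0$ on $x\in\{0,1\}$) converts the viscous term into $\tfrac{\mu}{2}\frac{d}{dt}\|\partial_x u\|_{L^2}^2$ and supplies the $\|\partial_t u\|_{L^2}^2$ dissipation, with $\int\diver\tau\cdot\partial_t u$ absorbed via Young's inequality and the controlled $\|\nabla\tau\|_{L^2}$.

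The main obstacle is the absence of vertical velocity dissipation, which makes every nonlinear term carrying a $\partial_y$-derivative of $u$ delicate: only the energy bound on $\|\partial_y u\|$ and the mixed dissipation $\|\partial_x\partial_y u\|$ are available, and $\partial_y^2u$ is \emph{not} controlled at all. I would resolve this with three devices. First, incompressibility $\partial_y u_2=-\partial_x u_1$ trades each bad vertical derivative of the normal component for a good horizontal one; for example, in $\int\partial_y(u\cdot\nabla u)\cdot\partial_y u$ the transport part vanishes by $\diver u=0$ and the surviving commutator reduces, after using this identity, to terms carrying exactly one factor of $\partial_x u$. Second, anisotropic Ladyzhenskaya-type inequalities such as $\|f\|_{L^\infty_xL^2_y}\le C\|f\|_{L^2}^{1/2}\|\partial_x f\|_{L^2}^{1/2}$ (which uses $u|_{x=0,1}=0$) and $\|g\|_{L^2_xL^\infty_y}\le C\|g\|_{L^2}^{1/2}\|\partial_y g\|_{L^2}^{1/2}$ let me distribute derivatives so that every velocity factor carries only $\partial_x$, $\partial_x^2$ or $\partial_x\partial_y$, all of which live in $D$, while the slowly-controlled $\|\partial_y u\|$ (itself $\lesssim\|\partial_x\partial_y u\|$) sits in front as a small energy factor. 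Third, whenever a genuine $\partial_y^2u$ threatens to appear—typically in the $H^1$-level stretching term $\int\partial_y(\nabla u\,\tau)\colon\partial_y\tau$—I integrate by parts in the periodic variable $y$ (no boundary term) to transfer the extra vertical derivative onto $\tau$ or $\bar\eta$, whose full $H^2$-dissipation then absorbs it.

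Assembling these, each nonlinear contribution from $u\cdot\nabla u$, $u\cdot\nabla\tau$, the stretching $(\nabla u\,\tau+\tau\nabla^\top u)$, the coupling $2\bar\eta\,\mathbb{D}(u)$, and $u\cdot\nabla\bar\eta$ is bounded by $C\sqrt{E}\,D$, using $\|u\|_{H^1}^2\le CD$ for the velocity factors and the $H^2$-dissipation of $\tau,\bar\eta$ for the high-order factors. This closes the differential inequality $\frac{d}{dt}E+c\,D\le C\sqrt{E}\,D$; choosing $\delta$ small and invoking continuity over $[0,T]$ delivers the estimate. I expect the genuinely hard part to be bookkeeping the worst nonlinear terms so that no $\partial_y^2u$ survives—that is, verifying in each case that the divergence-free trick, the anisotropic splitting, or integration by parts in $y$ applies, rather than the closing argument itself.
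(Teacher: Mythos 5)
Your overall strategy coincides with the paper's (a priori $H^1$ estimates via anisotropic inequalities, Poincar\'e in the confined variable, absorption by smallness, continuity argument), and your nonlinear estimates and Poincar\'e upgrades ($\|u\|_{L^2}\le C\|\partial_xu\|_{L^2}$, $\|\partial_yu\|_{L^2}\le C\|\partial_x\partial_yu\|_{L^2}$, mean-zero Poincar\'e for $\bar\eta$) are legitimate. The genuine gap is your claimed closure $\frac{\mathrm{d}}{\mathrm{d}t}E+cD\le C\sqrt{E}\,D$ for the \emph{flat} energy $E=\|u\|_{H^1}^2+\frac12\|\tau\|_{H^1}^2+\|\bar\eta\|_{H^1}^2$: contrary to your description, the linear skeleton does not close, because two \emph{linear} cross terms survive and produce order-one multiples of $D$ that no smallness of the data can remove. (i) Your $\partial_tu$ test leaves $\int\diver\tau\cdot\partial_tu\,\mathrm{d}x\mathrm{d}y\le\theta\|\partial_tu\|_{L^2}^2+\frac{1}{4\theta}\|\nabla\tau\|_{L^2}^2$; in your scheme the total dissipation available for $\|\nabla\tau\|_{L^2}^2$ has coefficient of order $\varepsilon+\kappa$, so absorption forces $\theta\gtrsim(\varepsilon+\kappa)^{-1}$, which exceeds the unit coefficient of $\|\partial_tu\|_{L^2}^2$ produced by that very test once $\varepsilon,\kappa$ are small. (ii) To generate the $\|\tau\|_{H^2}^2$ dissipation you must test the $\tau$-equation with $x$-derivatives, and its linear source $2\mathbb{D}(u)$ then contributes $\int\partial_x\mathbb{D}(u):\partial_x\tau\,\mathrm{d}x\mathrm{d}y$. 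Its natural partner --- the $\partial_x$-differentiated $u$-equation tested against $\partial_xu$ --- is \emph{not admissible}: $\partial_x$ is the normal direction, $\partial_xu_2$ need not vanish at $x\in\{0,1\}$, so the viscous boundary term $[\partial_x^2u_2\,\partial_xu_2]_{x=0}^{1}$ is uncontrolled. The unpaired term can be integrated by parts (using $\partial_x\tau=0$ on the boundary) into $-\int\partial_xu\cdot\diver\partial_x\tau\,\mathrm{d}x\mathrm{d}y\le\frac{\varepsilon}{8}\|\nabla^2\tau\|_{L^2}^2+\frac{2}{\varepsilon}\|\partial_xu\|_{L^2}^2$, but the leftover carries the constant $2/\varepsilon$, while the only $\|\partial_xu\|_{L^2}^2$ dissipation in your flat scheme has coefficient $\mu$; these terms are quadratic in the solution (linear level), not $O(\sqrt{E})\,D$, so the inequality breaks whenever $2/\varepsilon>\mu$.

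The missing idea --- and the way the paper closes the argument in Lemma \ref{lem6.1} --- is a \emph{hierarchical} energy rather than a flat one: one first proves a clean zeroth-order inequality \eqref{utau1}, whose dissipation $2\mu\|\partial_xu\|_{L^2}^2+\varepsilon\|\nabla\tau\|_{L^2}^2+\varepsilon\|\nabla\bar\eta\|_{L^2}^2$ carries unweighted coefficients, and then adds $\zeta$ times the first-order estimate \eqref{eq_uetateu2}, with $\zeta$ small depending only on $\mu,\varepsilon,\kappa$. The point is that all linear leftovers generated at the first-order level are arranged to be of \emph{lower} order --- the paper combines the two cross terms $-2\int\diver\tau\cdot\partial_y^2u\,\mathrm{d}x\mathrm{d}y$ and $-\int(\nabla u+\nabla^\top u):\Delta\tau\,\mathrm{d}x\mathrm{d}y$ into $-2\int\diver\partial_x\tau\cdot\partial_xu\,\mathrm{d}x\mathrm{d}y$, leaving only $C\|\partial_xu\|_{L^2}^2$ and $C\|\nabla\tau\|_{L^2}^2$ --- so that, after multiplication by $\zeta$, they are swallowed by the unweighted zeroth-order dissipation. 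With this two-level weighting inserted (your $E$ replaced by $\mathcal{E}_0+\zeta\mathcal{E}_1$), the rest of your plan --- nonlinear bounds of size $C\sqrt{E}\,D$, Poincar\'e, continuity --- goes through and becomes essentially the paper's proof.
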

Then, we have the following characterization for  the long-time  behavior of $(u,\bar{\eta},\tau)$. 
\begin{thm}\label{thm6}
Under the assumption of Theorem \ref{thm5}, there exist two positive constants $C$ and $\nu$ such that
\begin{equation*}
\begin{aligned}
&\|(u,\bar{\eta},\tau)\|_{H^1}\leq C\|(u_0,\bar{\eta},\tau_0)\|_{H^1}\mathrm{e}^{-\nu t},~~~\forall t\in (0,+\infty),
\end{aligned}
\end{equation*}
where $C$ and $\nu$ are independent of $t$.
\end{thm}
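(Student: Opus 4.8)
\section*{Proof proposal for Theorem~\ref{thm6}}

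The plan is to promote the uniform-in-time bound of Theorem~\ref{thm5} into exponential decay through a Lyapunov (Gr\"onwall) argument, exploiting the fact that the \emph{dissipation} appearing in Theorem~\ref{thm5} already dominates the full $H^1$ \emph{energy}. Concretely, the estimate of Theorem~\ref{thm5} is obtained by integrating in time a differential energy inequality of the form
\begin{equation*}
\frac{d}{dt}\mathcal{E}(t)+c_0\,\mathcal{D}(t)\le 0,
\end{equation*}
where $\mathcal{E}(t)$ is equivalent to $\|(u,\bar{\eta},\tau)\|_{H^1}^2$ and
\begin{equation*}
\mathcal{D}(t)=\|(\bar{\eta},\tau)\|_{H^2}^2+\|\partial_t u\|_{L^2}^2+\|(u,\partial_x u,\partial_x\partial_y u)\|_{L^2}^2 .
\end{equation*}
Thus it suffices to establish the coercivity estimate $\mathcal{E}(t)\le C\,\mathcal{D}(t)$, i.e.\ $\mathcal{D}(t)\ge \nu_0\,\mathcal{E}(t)$ for some $\nu_0>0$; the exponential decay then follows at once.

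First I would check the coercivity separately for each unknown. For $\bar{\eta}$ and $\tau$ this is immediate, since $\|(\bar{\eta},\tau)\|_{H^1}^2\le \|(\bar{\eta},\tau)\|_{H^2}^2\le \mathcal{D}(t)$. The velocity is the delicate part, because horizontal viscosity alone supplies only the horizontal-derivative dissipation $\|\partial_x u\|_{L^2}^2$. Here the confinement in the $x$-variable is decisive: the Dirichlet condition $u(0,y,t)=u(1,y,t)=0$ in \eqref{boundary} (which also forces $\partial_y u(0,y,t)=\partial_y u(1,y,t)=0$) allows the one-dimensional Poincar\'e inequality in $x$,
\begin{equation*}
\|u\|_{L^2}\le C\|\partial_x u\|_{L^2},\qquad \|\partial_y u\|_{L^2}\le C\|\partial_x\partial_y u\|_{L^2}.
\end{equation*}
Consequently $\|u\|_{H^1}^2=\|u\|_{L^2}^2+\|\partial_x u\|_{L^2}^2+\|\partial_y u\|_{L^2}^2\le C\,\|(u,\partial_x u,\partial_x\partial_y u)\|_{L^2}^2\le C\,\mathcal{D}(t)$, which recovers the missing vertical velocity energy from the purely horizontal dissipation.

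Collecting the three bounds gives $\mathcal{E}(t)\le C\,\mathcal{D}(t)$, hence $\mathcal{D}(t)\ge\nu_0\,\mathcal{E}(t)$ with $\nu_0=1/C$. Substituting into the differential inequality yields $\frac{d}{dt}\mathcal{E}(t)+c_0\nu_0\,\mathcal{E}(t)\le 0$, and Gr\"onwall's lemma gives $\mathcal{E}(t)\le \mathcal{E}(0)\,\mathrm{e}^{-c_0\nu_0 t}$. Using the equivalence $\mathcal{E}\sim\|(u,\bar{\eta},\tau)\|_{H^1}^2$ and taking square roots produces the claimed bound with $\nu=c_0\nu_0/2$ and a constant $C$ absorbing the equivalence constants, all independent of $t$.

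The main obstacle is precisely the velocity coercivity: recovering $\|u\|_{L^2}$ and the vertical derivative $\|\partial_y u\|_{L^2}$ from a dissipation that controls only horizontal derivatives. This is exactly why the long-time analysis is confined to $[0,1]\times\mathcal{T}$ rather than $[0,1]\times\mathbb{R}$---the Poincar\'e inequality in the bounded $x$-direction is unavailable without the Dirichlet walls, consistent with the footnote and with \cite{Dong-Wu-Xu2021}. A secondary technical point is to ensure that the differential inequality (not merely its time-integrated form) is genuinely available from the proof of Theorem~\ref{thm5}; if only the integrated estimate is recorded, one re-runs the same energy computation while retaining the differential inequality, which requires no new ingredient beyond the smallness already assumed there.
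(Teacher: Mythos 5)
Your proposal is correct and follows essentially the same route as the paper: the paper's proof of Theorem \ref{thm6} takes the differential energy inequality \eqref{eq_uetateu_H1} already established in the proof of Lemma \ref{lem6.1} (whose dissipation contains exactly $\|\partial_x u\|_{L^2}^2$, $\|\partial_x\partial_y u\|_{L^2}^2$, $\|\tau\|_{H^1}^2$-type damping and $\|\nabla\bar{\eta}\|_{H^1}^2$), uses the Poincar\'e inequality to absorb the energy into the dissipation, and concludes by a Gr\"onwall argument. Your identification of the velocity coercivity (recovering $\|u\|_{L^2}$ and $\|\partial_y u\|_{L^2}$ from the horizontal dissipation via the Dirichlet walls) as the key point, and your remark that the differential rather than time-integrated inequality must be used, match the paper's reasoning exactly.
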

\begin{remark}
    The long-time stability of problem  \eqref{EQ-t4h} over the spatial domains $\mathbb{R}^2$ or $[0,1]\times\mathbb{R}$ remains open.
\end{remark}
\subsection{Main ideas}
In order to establish the global well-posedness of the system \eqref{EQ-t1h}--\eqref{OB_as_boundary_1} for arbitrarily large initial data, the primary challenges stem from
two obstructions: (1) the strongly nonlinear stress coupling term $\nabla u\tau+\tau\nabla^\top u$, and (2) the absence of vertical dissipation in the velocity equation, precluding standard energy arguments. To overcome these difficulties, we synthesize the analytical frameworks developed by Constantin and Kliegl for the fully dissipative Oldroyd-B model in \cite{Constantin-Kliegl2012} and by Cao and Wu for partially dissipative magnetohydrodynamics in \cite{Cao-Wu2011}. The synthesis hinges on two critical observations: (1) the parabolic structure governing the evolution of $\eta$ and $\mathbb{T}$ ensures their non-negativity preservation through maximum principles. This structural property plays a pivotal role in initiating the bootstrap argument by providing essential control on stress configurations. (2)
the anisotropic Sobolev inequalities and the coupling structure which will  compensate for missing vertical regularization.

 For the large time stability analysis, we restrict on the spatial domain $[0,1]\times \mathcal{T}$. 
 The key point here is the Poincar\'{e} inequality over $[0,1]\times \mathcal{T}$, which will help us to complete the energy dissipation. The interplay between geometry property and the strong stress-velocity coupling structure forms the cornerstone of our asymptotic stability proof, ultimately demonstrating the exponential stability relaxation to equilibrium states.

The rest of the paper is organized as follows.
\begin{itemize}
\item In Section \ref{sec_p}, we present some preliminary results which will be used to prove the main Theorems.
\item In Section \ref{part2}, we will establish some {\it a priori} estimates, and prove the existence and uniqueness of  strong solution to problem \eqref{EQ-t1h}--\eqref{OB_as_boundary_1} via energy method.
\item In Section \ref{part3}, we consider the perturbed form of \eqref{EQ-t1h}--\eqref{OB_as_boundary_1}. We firstly establish a uniform $H^1$-bound for the solutions with small initial data. Then, we prove Theorem \ref{thm5}. Finally, we finish the proof of Theorem \ref{thm6} by rigorously characterizing long-time asymptotic behavior of the solutions.
\end{itemize}

\section{Preliminaries}\label{sec_p}
In this section, we introduce some useful results which will be used later.
To begin with, we have the following anisotropic inequalities.
\begin{lem}[Lemma 2.3 in \cite{Dong-Wu-Xu2021}]\label{2.3}
If a function $f(x,y)$ on $[0,1]\times\mathbb{R}$ satisfies $f\in H^{2}([0,1]\times\mathbb{R})$, then
\begin{equation*}
\begin{aligned}
\|f\|_{L^{\infty}}\leq&C\|f\|^\frac{1}{4}_{L^{2}}(\|f\|_{L^{2}}
+\|\partial_xf\|_{L^{2}})^\frac{1}{4}\|\partial_yf\|^\frac{1}{4}_{L^{2}}
(\|\partial_yf\|_{L^{2}}+\|\partial_{x}\partial_{y}f\|_{L^{2}})^\frac{1}{4}.
\end{aligned}
\end{equation*}
In addition, if $g\in H^{1}([0,1]\times\mathbb{R})$ and $h\in L^{2}([0,1]\times\mathbb{R})$, then the integral of the triple product over $[0,1]\times\mathbb{R}$ is bounded by
\begin{equation*}
\begin{aligned}
\int{|fgh|}\mathrm{d}x\mathrm{d}y\leq&C\|h\|_{L^{2}}\|g\|^{\frac{1}{2}}_{L^{2}}
\|\partial_{y}g\|^{\frac{1}{2}}_{L^{2}}\|f\|^{\frac{1}{2}}_{L^{2}}(\|f\|_{L^{2}}+\|\partial_{x}f\|_{L^{2}})^{\frac{1}{2}}.\\
\end{aligned}
\end{equation*}
\end{lem}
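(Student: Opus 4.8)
The plan is to derive both inequalities from two one-dimensional building blocks and then combine them anisotropically. The first building block is the Agmon inequality on the whole line: for $\psi\in H^1(\mathbb{R})$ one has $\|\psi\|_{L^\infty(\mathbb{R})}^2\le 2\|\psi\|_{L^2(\mathbb{R})}\|\psi'\|_{L^2(\mathbb{R})}$, obtained by writing $|\psi(y)|^2=2\int_{-\infty}^y\psi\psi'$ and applying Cauchy--Schwarz. The second is its bounded-interval analogue on $[0,1]$: for $\phi\in H^1(0,1)$, choosing a base point $x_0$ with $|\phi(x_0)|\le\|\phi\|_{L^2(0,1)}$ (which exists because the minimum of $|\phi|^2$ is at most its average over an interval of length one) and integrating $\tfrac{d}{dx}|\phi|^2$ gives $\|\phi\|_{L^\infty(0,1)}^2\le\|\phi\|_{L^2}^2+2\|\phi\|_{L^2}\|\phi'\|_{L^2}\le C\|\phi\|_{L^2}(\|\phi\|_{L^2}+\|\phi'\|_{L^2})$. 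The extra additive term $\|\phi\|_{L^2}$ --- absent in the whole-line case --- is exactly the source of the factors $\|f\|_{L^2}+\|\partial_xf\|_{L^2}$ and $\|\partial_yf\|_{L^2}+\|\partial_x\partial_yf\|_{L^2}$ in the statement, since the $[0,1]$-direction carries no decay.

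For the $L^\infty$ estimate I would first apply the whole-line Agmon inequality in the $y$ variable at each fixed $x$, giving $\|f\|_{L^\infty}^2=\sup_x\|f(x,\cdot)\|_{L^\infty_y}^2\le 2\big(\sup_xA(x)\big)^{1/2}\big(\sup_xB(x)\big)^{1/2}$, where $A(x):=\int_\mathbb{R}|f(x,y)|^2\,dy$ and $B(x):=\int_\mathbb{R}|\partial_yf(x,y)|^2\,dy$. Next I would estimate $\sup_xA$ and $\sup_xB$ by the bounded-interval argument in $x$: since $A$ is absolutely continuous with $A'(x)=2\int_\mathbb{R}f\,\partial_xf\,dy$, the base-point/FTC bound yields $\sup_xA\le\|f\|_{L^2}^2+2\|f\|_{L^2}\|\partial_xf\|_{L^2}\le C\|f\|_{L^2}(\|f\|_{L^2}+\|\partial_xf\|_{L^2})$, and identically $\sup_xB\le C\|\partial_yf\|_{L^2}(\|\partial_yf\|_{L^2}+\|\partial_x\partial_yf\|_{L^2})$. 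Substituting these two bounds and taking a square root produces precisely the claimed four-factor product.

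For the triple-product inequality I would first peel off $h$ by Cauchy--Schwarz, $\int|fgh|\le\|h\|_{L^2}\|fg\|_{L^2}$, and then bound $\|fg\|_{L^2}^2=\int_0^1\!\!\int_\mathbb{R}|f|^2|g|^2\,dy\,dx$. Putting $g$ in $L^\infty_y$ via Agmon in $y$ and keeping $f$ in $L^2_y$ gives, for each $x$, $\int_\mathbb{R}|f|^2|g|^2\,dy\le 2A(x)\,\|g(x,\cdot)\|_{L^2_y}\|\partial_yg(x,\cdot)\|_{L^2_y}$; pulling $\sup_xA$ out, reusing the bound $\sup_xA\le C\|f\|_{L^2}(\|f\|_{L^2}+\|\partial_xf\|_{L^2})$, and applying Cauchy--Schwarz in $x$ to $\int_0^1\|g(x,\cdot)\|_{L^2_y}\|\partial_yg(x,\cdot)\|_{L^2_y}\,dx\le\|g\|_{L^2}\|\partial_yg\|_{L^2}$ yields $\|fg\|_{L^2}^2\le C\|f\|_{L^2}(\|f\|_{L^2}+\|\partial_xf\|_{L^2})\|g\|_{L^2}\|\partial_yg\|_{L^2}$. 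Taking the square root and reinserting $\|h\|_{L^2}$ gives the stated bound.

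The calculus above is elementary; the genuine care lies in the regularity and measure-theoretic justifications, which I would isolate as the main point to get right. Specifically, I must ensure that for a.e. fixed $x$ the slice $f(x,\cdot)$ lies in $H^1(\mathbb{R})$ (so the $y$-Agmon step is legitimate and the slice decays at infinity), that $A$ and $B$ are absolutely continuous on $[0,1]$ with the stated derivatives so the base-point/FTC step applies, and that every interchange of integration is justified by Fubini--Tonelli. These are guaranteed by the hypothesis $f\in H^2$ --- in particular $\partial_x\partial_yf\in L^2$ is exactly what keeps $\sup_xB$ finite and hence is the sharp regularity behind the fourth factor --- together with a standard density argument approximating $f$ and $g$ by smooth functions and passing to the limit.
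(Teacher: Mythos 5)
Your proof is correct: the slice functions $A(x)=\|f(x,\cdot)\|_{L^2_y}^2$, $B(x)=\|\partial_y f(x,\cdot)\|_{L^2_y}^2$ are handled with the bounded-interval (base-point/FTC) bound in $x$, the whole-line Agmon bound is applied in $y$, and the exponents combine to give exactly the stated four-factor product and the triple-product estimate, with the measure-theoretic points (a.e.\ slice regularity, absolute continuity of $A$ and $B$, Fubini, density) correctly identified. Note that the paper itself does not prove this lemma --- it simply cites Lemma 2.3 of \cite{Dong-Wu-Xu2021} and remarks that the $\mathcal{T}\times\mathbb{R}$ argument there adapts to $[0,1]\times\mathbb{R}$ --- and your argument is essentially that standard anisotropic slicing proof (one-dimensional Agmon-type inequalities direction by direction, with the additive terms $\|f\|_{L^2}+\|\partial_x f\|_{L^2}$ arising precisely because the bounded direction carries no decay), so there is nothing of substance to compare beyond agreement.
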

\begin{remark}\label{remark1}
In \cite{Dong-Wu-Xu2021}, the authors proved the results of  Lemma \ref{2.3} for the spatial domain $\mathcal{T}\times\mathbb{R}$, the results for the spatial domain $[0,1]\times\mathbb{R}$ in Lemma \ref{2.3} can be proved in a similar way.
Moreover, if the domain $[0,1]\times\mathbb{R}$ in Lemma \ref{2.3} is replaced by $[0,1]\times\mathcal{T}$, the corresponding results will be modified as follows:
\begin{align*}
&\|f\|_{L^{\infty}}\leq C\|f\|^\frac{1}{4}_{L^{2}}(\|f\|_{L^{2}}
+\|\partial_xf\|_{L^{2}})^\frac{1}{4}\|\partial_yf\|^\frac{1}{4}_{L^{2}}
(\|\partial_yf\|_{L^{2}}+\|\partial_{x}\partial_{y}f\|_{L^{2}})^\frac{1}{4}\\
&\quad\quad\quad\quad+C\|f\|^\frac{1}{2}_{L^{2}}\|\partial_x f\|^\frac{1}{2}_{L^{2}}
+C\|f\|_{L^{2}},\\
&\int{|fgh|}\mathrm{d}x\mathrm{d}y\leq C\|h\|_{L^{2}}\|g\|^{\frac{1}{2}}_{L^{2}}
(\|g\|_{L^{2}}+\|\partial_{y}g\|_{L^{2}})^{\frac{1}{2}}
\|f\|^{\frac{1}{2}}_{L^{2}}(\|f\|_{L^{2}}+\|\partial_{x}f\|_{L^{2}})^{\frac{1}{2}}.
\end{align*}
\end{remark}
\begin{prop}[Local well-posedness]\label{prop_local}
    Let $u_0\in H^2(\Omega)$ be divergence-free, $\eta_0 \in L^{1}(\Omega)\cap H^2(\Omega)$, $\tau_0\in L^{1}(\Omega)\cap H^2(\Omega)$  be a symmetric matrix. Then, there exists a positive time $T_0$ depending on the norms of $u_0,\eta_0,\tau_0$ such that problem  \eqref{EQ-t1h}--\eqref{OB_as_boundary_1} admits a unique strong solution satisfying 
\begin{equation*}
\begin{aligned}
&u\in L^\infty(0,T_0;H^2(\Omega)),~\partial_t u\in L^\infty(0,T_0;L^2(\Omega))\cap L^2(0,T_0;H^1(\Omega)), \\
&\partial_x u\in L^2(0,T_0;H^2(\Omega)),\; \eta,\tau\in L^\infty(0,T_0;L^1(\Omega)\cap H^2(\Omega))\cap L^2(0,T_0;H^3(\Omega)),\\ 
&(\partial_t\eta,\partial_t\tau)\in L^\infty (0,T_0;L^2(\Omega))\cap L^2(0,T_0;H^1(\Omega)).
\end{aligned}
\end{equation*} 
\end{prop}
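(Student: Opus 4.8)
The plan is to construct the solution by a linear iteration scheme combined with uniform a priori estimates, to pass to the limit, and then to settle uniqueness by a separate energy estimate on the difference of two solutions. I would first regularize the data and define a sequence $(u^{n+1},\eta^{n+1},\tau^{n+1})$ from $(u^n,\eta^n,\tau^n)$ by freezing the nonlinearities: solve the linear horizontally viscous Stokes-type problem
\[
\partial_t u^{n+1}+u^n\cdot\nabla u^{n+1}+\nabla P^{n+1}-\mu\partial_x^2 u^{n+1}=\diver\tau^n,\qquad \nabla\cdot u^{n+1}=0,
\]
the linear transport--diffusion problem $\partial_t\eta^{n+1}+u^n\cdot\nabla\eta^{n+1}=\varepsilon\Delta\eta^{n+1}$, and the linear transport--diffusion--reaction problem
\[
\partial_t\tau^{n+1}+u^n\cdot\nabla\tau^{n+1}-(\nabla u^n\,\tau^{n+1}+\tau^{n+1}\nabla^\top u^n)+2\kappa\tau^{n+1}-\varepsilon\Delta\tau^{n+1}=2\eta^n\mathbb{D}(u^n),
\]
each supplemented with the initial and boundary conditions \eqref{OB_as_boundary_1}. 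Each subproblem is linear: the $\eta$- and $\tau$-equations are uniformly parabolic (full Laplacian), and the $u$-equation is a horizontally viscous Stokes system whose pressure can be eliminated by the Leray projection, which in the $x$-periodic (and, in the channel, $y$-periodic) setting is a Fourier multiplier commuting with $\partial_x^2$. Standard linear parabolic theory therefore produces each iterate in the asserted regularity class, so the scheme is well defined.

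The heart of the argument is a set of a priori estimates uniform in $n$ on a short interval $[0,T_0]$. I would work with the energy functional
\[
\mathcal E(t)=\|u\|_{H^2}^2+\|(\eta,\tau)\|_{H^2}^2
\]
together with the dissipative companion $\int_0^t(\|\partial_x u\|_{H^2}^2+\|\partial_t u\|_{H^1}^2+\|(\eta,\tau)\|_{H^3}^2)\,ds$ and a separate $L^1$ bound. For $\eta$ and $\tau$ the full diffusion yields the parabolic gain $L^\infty_tH^2\cap L^2_tH^3$ after testing against $\eta,\tau$ and their derivatives up to order two and integrating by parts, with the transport and reaction terms absorbed by the dissipation and Grönwall; the $L^1$ bound reflects the maximum-principle structure of the diffusion. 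For $u$, only horizontal dissipation is available, so I would test the velocity equation against $u$, $\partial_t u$ and second-order spatial derivatives (the test against $\partial_t u$ being essential since the degenerate operator $-\mu\partial_x^2$ does not supply full elliptic regularity), and estimate the transport term $u\cdot\nabla u$, the stress coupling $\nabla u\,\tau+\tau\nabla^\top u$, and the forcing $2\eta\,\mathbb{D}(u)$ by the anisotropic Sobolev inequalities of Lemma \ref{2.3} and Remark \ref{remark1}. The decisive point is that the vertical derivatives the dissipation cannot control are distributed between the factors via those inequalities, while $\diver\tau$ is controlled by the $L^2_tH^3$ parabolic bound on $\tau$. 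Choosing $T_0$ small renders the nonlinear contributions subcritical, so that $\mathcal E(t)+(\text{dissipation})\le C(\mathcal E(0))$ for $t\le T_0$, uniformly in $n$.

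With the uniform bounds in hand, I would prove convergence of the iterates in a weaker norm. Writing $\delta u=u^{n+1}-u^n$, $\delta\eta=\eta^{n+1}-\eta^n$, $\delta\tau=\tau^{n+1}-\tau^n$, the differences satisfy the same linear systems with right-hand sides that are (multi)linear in the previous differences and in the uniformly bounded iterates. Estimating $(\delta u,\delta\eta,\delta\tau)$ in $X_T:=L^\infty_tL^2\cap L^2_tH^1$ and using the uniform high-norm bounds to control the coefficients, I expect a contraction $\|(\delta u,\delta\eta,\delta\tau)\|_{X_T}\le \tfrac{1}{2}\|(\text{previous difference})\|_{X_T}$ after shrinking $T_0$ once more. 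This yields a strong limit in the low norm which, combined with the uniform high-norm bounds, gives via weak-$*$ compactness and interpolation a limit $(u,\eta,\tau)$ in the stated regularity solving \eqref{EQ-t1h}--\eqref{OB_as_boundary_1}. Uniqueness follows by running the same difference estimate on two solutions sharing the same data and invoking Grönwall in $L^2$.

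I expect the main obstacle to be the $H^2$ a priori estimate for $u$ in the absence of vertical dissipation: controlling the vertical regularity of the nonlinear and coupling terms in the time-integrated estimate using only the horizontal smoothing of $u$ together with the full smoothing of $(\eta,\tau)$. The anisotropic inequalities of Lemma \ref{2.3} and Remark \ref{remark1}, together with the favorable parabolic structure of the $\tau$-equation that upgrades $\diver\tau$, are precisely what make this estimate close; the remaining steps are standard linear parabolic theory and Grönwall.
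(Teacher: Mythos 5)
Your proposal is correct in outline, but you should know that the paper does not actually prove Proposition \ref{prop_local}: its ``proof'' is the remark immediately following the statement, which declares the result standard and defers to the local theory of \cite{Constantin-Kliegl2012} (a fully viscous Cauchy problem on $\mathbb{R}^2$) ``after some modification,'' also pointing to \cite{Bedrossian_Vicol_2022}. What you have written is, in effect, that missing modification spelled out: a linearized iteration with frozen coefficients, uniform short-time estimates closed by the anisotropic inequalities of Lemma \ref{2.3} and Remark \ref{remark1} together with a Riccati-type Gr\"onwall argument (smallness of $T_0$, not of the data), contraction of the differences in a low norm, and uniqueness by the same difference estimate. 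This is a more self-contained route than the paper's, and it correctly isolates the one point where the scheme of \cite{Constantin-Kliegl2012} does not transfer verbatim: the velocity subproblem is only horizontally parabolic, so its $H^2$ estimate must be closed by testing against $\partial_t u$ and second-order derivatives and by distributing vertical derivatives through the anisotropic trilinear bound --- exactly the mechanism the paper itself deploys later, at the nonlinear level, in Lemmas \ref{lem5.2}--\ref{lem5.4}. Your approach buys a complete, checkable argument adapted to the anisotropic setting; the paper's citation buys brevity at the cost of leaving precisely this adaptation implicit.

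Two caveats, both repairable. First, your claim that ``standard linear parabolic theory'' produces each iterate is not literally true for the velocity step: the operator $-\mu\partial_x^2$ is degenerate, so solvability of the linear Stokes-type problem in the class $u\in L^\infty(0,T_0;H^2)$ with $\partial_x u\in L^2(0,T_0;H^2)$ is not off-the-shelf. You need a Galerkin approximation (or a vanishing vertical viscosity $-\epsilon\partial_y^2 u$ with $\epsilon\to 0$) combined with the same anisotropic energy estimates you invoke at the nonlinear level; since those estimates are linear in the unknown iterate, this works, but it should be stated. Second, the contraction space $X_T=L^\infty_t L^2\cap L^2_t H^1$ is too strong for the velocity component: $\delta u$ only comes with horizontal dissipation, so $X_T$ should carry $\|\partial_x\delta u\|_{L^2_tL^2}$ rather than $\|\nabla\delta u\|_{L^2_tL^2}$ for $\delta u$ (full $L^2_tH^1$ is fine for $\delta\eta,\delta\tau$, which enjoy full diffusion). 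The difference estimates still close in this weaker norm because, in every product where a vertical derivative of a velocity difference would otherwise be needed, Lemma \ref{2.3} lets you place the vertical derivative on $\delta\eta$, $\delta\tau$, or on the uniformly bounded high-norm iterates.
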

\begin{remark}
    The prove of Proposition \ref{prop_local} is standard. In \cite{Constantin-Kliegl2012}, the authors proved the local well-posedness of a similar Cauchy problem. After some 
modification, one can easily prove the results in  Proposition \ref{prop_local}. There are also many other methods to prove the local well-posedness, one can refer to \cite{Bedrossian_Vicol_2022} and the reference therein for instance.
\end{remark}
To prove the global well-posedness, similar to the discuss in \cite{Constantin-Kliegl2012}, we need $\eta$ to be nonnegative and $\mathbb{T}= \tau +\eta \mathbb{I}$ to be nonnegative  definite. See also \cite{Barrett_Lu_Suli_2017,Bathory-Bulicek-Malek2021,Malek_etal_2018} for some related discuss on the nonnegativity of $\eta$ and $\mathbb{T}= \tau +\eta \mathbb{I}$. Specifically, we have the following Lemma.
\begin{lem}\label{3.1}
Let $(u,\eta,\tau)$ be strong solutions of problem \eqref{EQ-t1h}--\eqref{OB_as_boundary_1} with $u\in L^{1}(0,T;W^{1,\infty}(\Omega))$   divergence-free. Denote $\mathbb{T}:= \tau +\eta\mathbb{I}.$ Then $\eta$, $\mathrm{det}\mathbb{T}$ will stay nonnegative provided that $\eta_0$, $\mathrm{det}(\mathbb{T}_0)$ are nonnegative.
\end{lem}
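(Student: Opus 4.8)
The plan is to treat the two nonnegativity statements separately: $\eta$ by a scalar parabolic maximum principle, and $\det\mathbb{T}$ by a matrix maximum principle adapted from Constantin--Kliegl \cite{Constantin-Kliegl2012}. Since $u$ is divergence-free with $u\in L^1(0,T;W^{1,\infty})$, the density $\eta$ solves the linear advection--diffusion equation $\partial_t\eta+u\cdot\nabla\eta-\varepsilon\Delta\eta=0$ with homogeneous Neumann data $\partial_x\eta=0$ on $\{x=0,1\}$. First I would test this equation against $\min(\eta,0)$, which gives $\tfrac12\tfrac{d}{dt}\|\eta_-\|_{L^2}^2+\varepsilon\|\nabla\eta_-\|_{L^2}^2=0$ with $\eta_-:=\max(-\eta,0)$ (the transport term drops by incompressibility and the Neumann data kill the boundary contribution), so $\|\eta_-(t)\|_{L^2}\le\|\eta_-(0)\|_{L^2}=0$ and $\eta\ge0$ is propagated.

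For $\det\mathbb{T}$ I would first derive a scalar evolution equation for $d:=\det\mathbb{T}$. Working from the $\mathbb{T}$-equation in \eqref{EQ-t2h}$_3$ and Jacobi's formula $Dd=\mathrm{adj}(\mathbb{T}):D\mathbb{T}$ for any derivation $D$ (where $\mathrm{adj}(\mathbb{T})=(\mathrm{tr}\,\mathbb{T})\mathbb{I}-\mathbb{T}$ is the adjugate of the symmetric matrix $\mathbb{T}$), two algebraic identities do the work. The pointwise identity $\Delta d=\mathrm{adj}(\mathbb{T}):\Delta\mathbb{T}+2\sum_{k}\det(\partial_k\mathbb{T})$ converts the componentwise Laplacian into $\Delta d$ plus a gradient remainder, while the relation $\mathbb{T}\,\mathrm{adj}(\mathbb{T})=d\,\mathbb{I}$ combined with $\nabla\cdot u=0$ makes the entire stress-stretching term vanish: $\mathrm{adj}(\mathbb{T}):(\nabla u\,\mathbb{T}+\mathbb{T}\nabla^{\top}u)=2d\,(\nabla\cdot u)=0$. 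Using also $\mathrm{adj}(\mathbb{T}):\mathbb{T}=2d$ and $\mathrm{adj}(\mathbb{T}):\mathbb{I}=\mathrm{tr}\,\mathbb{T}$, I obtain the closed equation
\[
\partial_t d+u\cdot\nabla d-\varepsilon\Delta d+4\kappa d=-2\varepsilon\sum_{k}\det(\partial_k\mathbb{T})+2\kappa\eta\,\mathrm{tr}\,\mathbb{T}.
\]

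The final step is a maximum principle, and the crux is controlling the indefinite gradient term $-2\varepsilon\sum_k\det(\partial_k\mathbb{T})$. I would track the first time $t_*$ and point $x_*$ at which $\mathbb{T}$ reaches the boundary of the positive-semidefinite cone, so that there $d=0$, $\mathbb{T}$ is rank one with eigenvalues $\lambda_1\ge0$ and $0$, and $\mathrm{adj}(\mathbb{T})=\lambda_1\,v\otimes v$ for the unit kernel vector $v$. Since $d$ attains an interior spatial minimum there, $\nabla d=0$ and $\Delta d\ge0$; but $\partial_k d=\mathrm{adj}(\mathbb{T}):\partial_k\mathbb{T}=\lambda_1\,v^{\top}(\partial_k\mathbb{T})v$, so $\nabla d=0$ forces $v^{\top}(\partial_k\mathbb{T})v=0$ for each $k$. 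Expressing $\partial_k\mathbb{T}$ in an orthonormal basis $\{v,w\}$ then gives $\det(\partial_k\mathbb{T})=-(v^{\top}(\partial_k\mathbb{T})w)^2\le0$, hence $-2\varepsilon\sum_k\det(\partial_k\mathbb{T})\ge0$. Together with $\Delta d\ge0$, $d=0$, and $2\kappa\eta\,\mathrm{tr}\,\mathbb{T}=2\kappa\eta\lambda_1\ge0$ (using $\eta\ge0$ from the first step), the equation yields $\partial_t d(x_*,t_*)\ge0$, contradicting $d$ descending through $0$. Thus the cone is never exited and $\det\mathbb{T}\ge0$ persists.

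I expect the main obstacle to be exactly the sign of the diffusion-generated term $-2\varepsilon\sum_k\det(\partial_k\mathbb{T})$, which is not signed in general; the resolution above is that it becomes favorable precisely at a first touching point, because the constraint $\nabla\det\mathbb{T}=0$ degenerates $\partial_k\mathbb{T}$ along the kernel direction of $\mathbb{T}$. To make the touching argument rigorous I would either run it on the approximate solutions furnished by the local theory (Proposition \ref{prop_local}) and pass to the limit, or replace it by the perturbation $d^{\sigma}=d+\sigma(1+t)$ and let $\sigma\downarrow0$; the Neumann conditions $\partial_x\mathbb{T}=0$ (inherited from $\partial_x\tau=\partial_x\eta=0$) force $\partial_x d=0$ on the lateral boundary, so a minimum located on $\{x=0,1\}$ is handled identically, and in the strip case $[0,1]\times\mathbb{R}$ the $L^1\cap H^2$ integrability controls the behaviour as $y\to\pm\infty$. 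The hypothesis $u\in L^1(0,T;W^{1,\infty})$ is used only to justify the transport estimates and the evaluation of the material derivative along characteristics.
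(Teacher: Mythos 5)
Your treatment of $\eta$ is exactly the paper's argument (an $L^2$ estimate on the negative part, using incompressibility and the Neumann/periodic boundary conditions), and your algebra for $d=\det\mathbb{T}$ is correct: the evolution equation for $d$ via Jacobi's formula, the vanishing of the stretching term through $\mathbb{T}\,\mathrm{adj}(\mathbb{T})=d\,\mathbb{I}$ and $\nabla\cdot u=0$, and the observation that at a point where $\mathbb{T}\succeq0$ is rank one (eigenvalue $\lambda_1>0$) and $\nabla d=0$, one gets $\det(\partial_k\mathbb{T})=-(v^{\top}\partial_k\mathbb{T}\,w)^2\le0$. This is a genuinely different route from the paper, which never writes an equation for $\det\mathbb{T}$: it scalarizes via $\Gamma_1,\Gamma_2,\Gamma_3$, shows by a Lagrange-type identity that the diffusive contribution to the evolution of $\sqrt{\Gamma_1^2+\Gamma_2^2+\theta^2}$ is nonpositive \emph{everywhere} (not only at touching points), obtains the pointwise differential inequality \eqref{eq_gamma} for $\gamma=\Gamma_3-2\sqrt{\Gamma_1^2+\Gamma_2^2}$, and closes with an $L^2$ bound on the negative part plus Gronwall, which is exactly where $u\in L^1(0,T;W^{1,\infty})$ enters. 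That structure makes the paper's proof insensitive to the unboundedness of $[0,1]\times\mathbb{R}$ and avoids any pointwise minimum argument.

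However, your proof has a genuine gap at precisely the step that converts the touching-point computation into persistence of $d\ge0$. A first-touching argument needs a strict barrier, and the penalization you propose, $d^{\sigma}=d+\sigma(1+t)$, destroys the structure your sign argument depends on: at a first zero of $d^{\sigma}$ one has $d=-\sigma(1+t_*)<0$, so $\mathbb{T}$ is \emph{indefinite} there, $\mathrm{adj}(\mathbb{T})$ is no longer positive semidefinite rank one, $\nabla d=0$ no longer forces $v^{\top}(\partial_k\mathbb{T})v=0$ along a kernel direction, and $\det(\partial_k\mathbb{T})$ --- the very term the argument exists to control --- is unsigned; moreover $2\kappa\eta\,\mathrm{tr}\,\mathbb{T}$ also loses its sign, since trace nonnegativity is not known a priori (in the paper it is a \emph{consequence} of $\gamma\ge0$, not an input; your argument likewise implicitly needs $\mathbb{T}(\cdot,t_*)\succeq0$ everywhere at the first touching time). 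The alternative you offer, running the argument "on approximate solutions and passing to the limit," does not address this either. A correct completion would penalize inside the cone rather than on the determinant, e.g. $\mathbb{T}_{\sigma}=\mathbb{T}+\sigma\phi(t)\mathbb{I}$ with $\phi'(t)\ge C\|\nabla u(t)\|_{L^{\infty}}\phi(t)$ (this is where $u\in L^1(0,T;W^{1,\infty})$ is truly used); such a perturbation keeps every touching point of $\det\mathbb{T}_{\sigma}$ semidefinite and rank one, removes the degenerate case $\lambda_1=0$ that you did not treat, and restores attainment of a minimum in the unbounded strip. As written, the resolution announced in your final paragraph does not close the proof.
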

\begin{proof}
Inspired by the work of \cite{Constantin-Kliegl2012}, let
\begin{equation*} 
\begin{cases}
 \Gamma_{1}=\frac{1}{2}(\mathbb{T}_{11}-\mathbb{T}_{22}), \\
 \Gamma_{2}=\mathbb{T}_{12}=\mathbb{T}_{21}, \\
 \Gamma_{3}=\mathrm{tr}\mathbb{T} = \mathbb{T}_{11}+\mathbb{T}_{22},
 \end{cases} 
 ~\text{and}~ 
\begin{cases}
 \alpha=\frac{1}{2}(\partial_xu_1-\partial_yu_2), \\
 \beta=\frac{1}{2}(\partial_xu_2+\partial_yu_1), \\
 \omega=\partial_xu_2-\partial_yu_1.
 \end{cases} 
\end{equation*}
Then, from \eqref{EQ-t1h}--\eqref{OB_as_boundary_1}, we have
\begin{equation}\label{Gamma} 
\begin{cases}
 \partial_t\eta+u\cdot\nabla\eta-\varepsilon\Delta\eta=0,\\
 \partial_{t}\Gamma_{1}+u\cdot\nabla\Gamma_{1}+2\kappa\Gamma_{1}-\varepsilon\Delta\Gamma_{1}=-\Gamma_{2}\omega+\alpha\Gamma_{3}, \\
 \partial_{t}\Gamma_{2}+u\cdot\nabla\Gamma_{2}+2\kappa\Gamma_{2}-\varepsilon\Delta\Gamma_{2}=\Gamma_{1}\omega+\Gamma_{3}\beta, \\
 \partial_{t}\Gamma_{3}+u\cdot\nabla\Gamma_{3}+2\kappa\Gamma_{3}-\varepsilon\Delta\Gamma_{3}=4\Gamma_{1}\alpha
+4\Gamma_{2}\beta+2\kappa\eta,
 \end{cases} 
\end{equation}
with the initial-boundary conditions that 
   \begin{gather*} 
      \eta(x,y,0)=\eta_0(x,y)\geq 0,~~\mathrm{det}(\mathbb{T}_0)=\left(\frac{\Gamma_{3}^2}{4}-(\Gamma^{2}_{1}+\Gamma^{2}_{2})\right)(x,y,0)\geq0, \\
\big(\partial_{x}\eta,\partial_{x}\Gamma_1,\partial_{x}\Gamma_2,\partial_{x}\Gamma_3\big)(0,y,t) = \big(\partial_{x}\eta,\partial_{x}\Gamma_1,\partial_{x}\Gamma_2,\partial_{x}\Gamma_3\big)(1,y,t) = 0.  
    \end{gather*}
Next, we prove the nonnegativity of $\eta$. Let $\lambda=-\eta$ in $\eqref{Gamma}_1$, then we have
\begin{equation}\label{lambda}
\begin{cases}
\partial_t\lambda(x,y,t)+u\cdot\nabla\lambda(x,y,t)-\varepsilon\Delta\lambda(x,y,t)=0,\\[1mm]
\eta(x,y,0)=\eta_0(x,y)\geq 0.
\end{cases}
\end{equation}
Multiplying \eqref{lambda}$_1$ by $\lambda_+:=\max\{0,\lambda\}$ and integrating the result over $\Omega$ by parts, we get
\begin{equation*}
\begin{aligned}
\frac{1}{2}\frac{\mathrm{d}}{\mathrm{d}t}\|\lambda_+(x,y,t)\|^{2}_{L^{2}}+\varepsilon\|\nabla\lambda_+(x,y,t)\|^{2}_{L^{2}}=0.
\end{aligned}
\end{equation*}
Integrating the above result over $(0,t)$, we get
\begin{equation*}
\begin{aligned}
\|\lambda_+(x,y,t)\|^{2}_{L^{2}}\leq\|\lambda_+(x,y,0)\|^{2}_{L^{2}}=0,~~\Rightarrow~~\lambda_+(x,y,t)\overset{a.e.}{=}0,
\end{aligned}
\end{equation*}
where the fact $\eta_0\geq0$ is used. Therefore, $$\eta(x,y,t)=-\lambda(x,y,t)\geq0.$$

Next, let
\begin{equation*}
\begin{aligned}
\gamma :=\Gamma_{3}-2\sqrt{\Gamma^{2}_{1}+\Gamma^{2}_{2}}.
\end{aligned}
\end{equation*}
A direct calculation shows that the nonnegativity of $\gamma$ is equivalent to  the nonnegativity of $\mathrm{det}\mathbb{T}$.  Multiplying \eqref{Gamma}$_2$ and \eqref{Gamma}$_3$ by $\Gamma_1$ and $\Gamma_2$ respectively, suming the result up, we get 
$$
\frac{1}{2}\big(\partial_t + u\cdot \nabla + 2\kappa - \varepsilon\Delta\big)(\Gamma_1^2 + \Gamma_2^2)  = -\varepsilon(|\nabla \Gamma_1|^2 + |\nabla \Gamma_2|^2)
+ (\alpha\Gamma_1 + \beta \Gamma_2)\Gamma_3,
$$
which implies, for any given $\theta>0$, that 
\begin{align}\label{eq_gamma12} 
          &\big(\partial_t + u\cdot \nabla + 2\kappa- \varepsilon\Delta\big)\sqrt{\Gamma_1^2 + \Gamma_2^2 + \theta^2}   \notag \\ 
          =\,& \varepsilon\frac{1}{\sqrt{\Gamma_1^2 + \Gamma_2^2 + \theta^2}}\left(\frac{|\Gamma_1\nabla \Gamma_1 + \Gamma_2\nabla \Gamma_2|^2}{\Gamma_1^2 + \Gamma_2^2 + \theta^2} - |\nabla \Gamma_1|^2 - |\nabla \Gamma_2|^2\right) \notag\\ 
         & + \frac{1}{\sqrt{\Gamma_1^2 + \Gamma_2^2 + \theta^2}}\Big((\alpha\Gamma_1 + \beta\Gamma_2)\Gamma_3 + 2\kappa\theta^2\Big) \notag\\
         =\,& -\varepsilon\frac{1}{ (\Gamma_1^2 + \Gamma_2^2 + \theta^2)^{\frac{3}{2}}}\Big(|\Gamma_1\nabla \Gamma_2 + \Gamma_2\nabla \Gamma_1|^2  + \theta^2(|\nabla\Gamma_1|^2 + |\nabla\Gamma_2|^2) \Big) \notag\\
         & + \frac{1}{\sqrt{\Gamma_1^2 + \Gamma_2^2 + \theta^2}}\Big((\alpha\Gamma_1 + \beta\Gamma_2)\Gamma_3 + 2\kappa\theta^2\Big) \notag\\ 
         \leq\,& \frac{1}{\sqrt{\Gamma_1^2 + \Gamma_2^2 + \theta^2}}\Big((\alpha\Gamma_1 + \beta\Gamma_2)\Gamma_3 + 2\kappa\theta^2\Big).
\end{align} 
Combining \eqref{eq_gamma12} and \eqref{Gamma}$_{4}$, we get 
\begin{align}
    &\left(\partial_t + u\cdot \nabla - \varepsilon\Delta + 2\kappa + \frac{2(\alpha\Gamma_1+\beta\Gamma_2)}{\sqrt{\Gamma_1^2 + \Gamma_2^2 + \theta^2}}\right)\left(\Gamma_3 + 2\theta - 2\sqrt{\Gamma_1^2 + \Gamma_2^2 + \theta^2} \right) \notag\\ 
    &\geq \frac{4\theta (\alpha\Gamma_1+\beta\Gamma_2)}{\sqrt{\Gamma_1^2 + \Gamma_2^2 + \theta^2}},
\end{align}
where the nonnegativity of $\eta$ is used.
Letting $\theta\to 0_+$, we get $\gamma(x,y,t)$ satisfying 
\begin{equation}\label{eq_gamma}
\begin{aligned}
\partial_{t}\gamma(x,y,t)+u\cdot\nabla\gamma(x,y,t)+2\left(\kappa+\frac{\alpha\Gamma_{1}+\Gamma_{2}\beta}{\sqrt{\Gamma^{2}_{1}+\Gamma^{2}_{2}}}\right)\gamma(x,y,t)
-\varepsilon\Delta\gamma(x,y,t)\geq0,
\end{aligned}
\end{equation}
with the initial and boundary conditions that
\begin{equation*} 
\begin{cases}
\gamma(x,y,0)=2\sqrt{\mathrm{\mathrm{det}}\mathbb{T}_0+\Gamma^{2}_{1}(x,y,0)+\Gamma^{2}_{2}(x,y,0)}-2\sqrt{\Gamma^{2}_{1}(x,y,0)
+\Gamma^{2}_{2}(x,y,0)}\geq0,\\
 \partial_{x}\gamma(0,y,t)=\partial_{x}\gamma(1,y,t)=0.
\end{cases} 
\end{equation*}
With the help of the regularity of $u$, we have
\begin{equation}\label{MT}
\begin{aligned}
\int^T_0{\Big\|\kappa+\frac{\alpha\Gamma_{1}+\Gamma_{2}\beta}{\sqrt{\Gamma^{2}_{1}+\Gamma^{2}_{2}}}\Big\|_{L^{\infty}_{xy}}}\mathrm{d}t
\leq C(\kappa T+ \|\nabla u\|_{L^1_{T}L^\infty_{xy}})\leq M_T.
\end{aligned}
\end{equation}
Setting $\mathfrak{a}=-\gamma$, $\mathfrak{a}_+ = \max\{0, \mathfrak{a}\}$, from \eqref{eq_gamma} we can get that
\begin{equation*}
\begin{aligned}
\frac{1}{2}\frac{\mathrm{d}}{\mathrm{d}t}\|\mathfrak{a}_+(x,y,t)\|^{2}_{L^{2}}+\varepsilon\|\nabla\mathfrak{a}_+(x,y,t)\|^{2}_{L^{2}}
\leq\Big\|\kappa+\frac{\alpha\Gamma_{1}+\Gamma_{2}\beta}{\sqrt{\Gamma^{2}_{1}+\Gamma^{2}_{2}}}\Big\|_{L^{\infty}_{xy}}
\|\mathfrak{a}_+(x,y,t)\|^{2}_{L^{2}},
\end{aligned}
\end{equation*}
which, together with Gronwall inequality, \eqref{MT} and the fact $\gamma_0\geq0$, implies
\begin{equation*}
\begin{aligned}
\|\mathfrak{a}_+(x,y,t)\|^{2}_{L^{2}}\leq\|\mathfrak{a}_+(x,y,0)\|^{2}_{L^{2}}\exp(M_T),
\end{aligned}
\end{equation*}
which implies $\mathfrak{a}_+\overset{a.e.}{=}0$. Therefore, 
$$\mathrm{tr}(\mathbb{T})=\Gamma_{3}\geq0,~~~ \mathrm{det}\mathbb{T}=\frac{\Gamma_{3}^2}{4}-(\Gamma^{2}_{1}+\Gamma^{2}_{2})\geq0.$$
The proof is completed.
\end{proof}
With the help of Lemma \ref{3.1}, we can deduce the following Corollary.
\begin{cor}\label{cor1}
Under the assumptions of Theorem \ref{thm4}, let $(u,\eta,\tau)$ be a strong solution  of problem \eqref{EQ-t1h}--\eqref{OB_as_boundary_1} with $\partial_x u\in L^2(0,T;H^2(\Omega))$ be divergence-free. Then it holds that   $\eta$ remains non-negative and $\mathbb{T}$ remains non-negative  definite. Moreover,
\begin{equation*} 
\|\mathbb{T}\|_{L^{1}}\leq\|\mathrm{tr}\mathbb{T}\|_{L^{1}} ,~~\|\tau\|_{L^1}\leq \|\mathrm{tr}\mathbb{T}\|_{L^{1}} + \sqrt{2}\|\eta\|_{L^1}. 
\end{equation*}
\end{cor}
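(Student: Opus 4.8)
The plan is to split the statement into its two assertions and treat them separately. The non-negativity of $\eta$ and the non-negative definiteness of $\mathbb{T}$ will follow directly from Lemma \ref{3.1} once its hypotheses are verified, while the two $L^1$ bounds reduce to pointwise algebraic inequalities for non-negative definite $2\times2$ matrices, integrated over $\Omega$.

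The only nontrivial input is checking that the solution furnished by the corollary meets the regularity requirement $u\in L^1(0,T;W^{1,\infty}(\Omega))$ of Lemma \ref{3.1}, and I expect this to be the main obstacle: in two dimensions $H^2$ does not embed into $W^{1,\infty}$, so the bound $u\in L^\infty(0,T;H^2(\Omega))$ alone is insufficient, and the extra horizontal regularity $\partial_x u\in L^2(0,T;H^2(\Omega))$ is decisive. I would apply the anisotropic $L^\infty$ inequality of Lemma \ref{2.3} (or, on $[0,1]\times\mathcal{T}$, its variant in Remark \ref{remark1}) to each component of $\nabla u$, giving
$$\|\nabla u\|_{L^\infty}\le C\|\nabla u\|_{L^2}^{1/4}\big(\|\nabla u\|_{L^2}+\|\partial_x\nabla u\|_{L^2}\big)^{1/4}\|\partial_y\nabla u\|_{L^2}^{1/4}\big(\|\partial_y\nabla u\|_{L^2}+\|\partial_x\partial_y\nabla u\|_{L^2}\big)^{1/4}.$$
The factors $\|\nabla u\|_{L^2}$ and $\|\partial_y\nabla u\|_{L^2}$ are controlled in $L^\infty(0,T)$ by $u\in L^\infty(0,T;H^2)$, whereas $\|\partial_x\nabla u\|_{L^2}$ and $\|\partial_x\partial_y\nabla u\|_{L^2}$ lie in $L^2(0,T)$ by $\partial_x u\in L^2(0,T;H^2)$. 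Hence the right-hand side is bounded by $C\big(1+\|\partial_x\nabla u\|_{L^2}^{1/4}\big)\big(1+\|\partial_x\partial_y\nabla u\|_{L^2}^{1/4}\big)$, whose time-integrability follows from H\"older: each of the two $1/4$-power factors lies in $L^8(0,T)$, so their product lies in $L^4(0,T)\subset L^1(0,T)$ on the finite interval $[0,T]$. Combined with $u\in L^\infty(0,T;L^\infty)$ from $H^2\hookrightarrow L^\infty$ in 2D, this yields $u\in L^1(0,T;W^{1,\infty}(\Omega))$, so Lemma \ref{3.1} applies and delivers $\eta\ge0$, $\mathrm{tr}\mathbb{T}=\Gamma_3\ge0$, and $\det\mathbb{T}\ge0$.

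For the $L^1$ estimates I would argue pointwise using the Frobenius norm $|\cdot|$. In the variables $\Gamma_1,\Gamma_2,\Gamma_3$ of Lemma \ref{3.1} a direct computation gives $|\mathbb{T}|^2=\tfrac12\Gamma_3^2+2(\Gamma_1^2+\Gamma_2^2)$, while the just-established $\det\mathbb{T}=\tfrac14\Gamma_3^2-(\Gamma_1^2+\Gamma_2^2)\ge0$ forces $\Gamma_1^2+\Gamma_2^2\le\tfrac14\Gamma_3^2$; substituting yields $|\mathbb{T}|^2\le\Gamma_3^2$, that is $|\mathbb{T}|\le\Gamma_3=\mathrm{tr}\mathbb{T}$ pointwise (using $\Gamma_3\ge0$). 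Integrating over $\Omega$ gives $\|\mathbb{T}\|_{L^1}\le\|\mathrm{tr}\mathbb{T}\|_{L^1}$. For $\tau=\mathbb{T}-\eta\mathbb{I}$, the triangle inequality together with $|\mathbb{I}|=\sqrt2$ gives $|\tau|\le|\mathbb{T}|+\sqrt2\,|\eta|$ pointwise; integrating and inserting the previous bound produces $\|\tau\|_{L^1}\le\|\mathrm{tr}\mathbb{T}\|_{L^1}+\sqrt2\,\|\eta\|_{L^1}$, which completes the proof. The whole argument is routine once the $W^{1,\infty}$ regularity is secured, so the crux is entirely the anisotropic interpolation in the second step.
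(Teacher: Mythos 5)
Your proposal is correct and follows essentially the same route as the paper: both use the anisotropic inequality of Lemma \ref{2.3} to upgrade $u\in L^\infty(0,T;H^2)$, $\partial_x u\in L^2(0,T;H^2)$ to $\nabla u\in L^1(0,T;L^\infty)$, invoke Lemma \ref{3.1} for the sign conditions, and then integrate a pointwise matrix inequality plus Minkowski's inequality for the two $L^1$ bounds. The only differences are cosmetic: the paper's time-integration is the cruder bound $\|\nabla u\|_{L^1_TL^\infty}\leq C\sqrt{T}\,(\|u\|_{L^2H^2}+\|\partial_x u\|_{L^2H^2})$ rather than your $L^8$-H\"older bookkeeping, and it writes the pointwise estimate $|\mathbb{T}|\leq \mathrm{tr}\,\mathbb{T}$ directly in the entries via $\mathbb{T}_{12}^2\leq\mathbb{T}_{11}\mathbb{T}_{22}$ instead of in the $\Gamma$-variables, which is the identical inequality in different coordinates.
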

\begin{proof}
From Lemma \ref{2.3} and $\partial_x u\in L^2(0,T;H^2(\Omega))$, we have
$$\|\nabla u\|_{L^1_TL^\infty} \leq \sqrt{T}(\|  u\|_{L^2H^2} + \|\partial_x u\|_{L^2H^2}) <\infty.$$
Then, the non-negativity of $\eta$ and $\mathbb{T}$ is a consequence of Lemma \ref{3.1}.
Moreover, it is clear, from Lemma \ref{3.1}, that
\begin{equation*}
\begin{aligned}
\mathrm{det}\mathbb{T}=\mathbb{T}_{11}\mathbb{T}_{22}-\mathbb{T}_{12}^{2}\geq0,
\end{aligned}
\end{equation*}
which means 
\begin{equation*}
\begin{aligned}
\mathbb{T}_{12}^{2}\leq\mathbb{T}_{11}\mathbb{T}_{22}.
\end{aligned}
\end{equation*}
Using the boundary conditions \eqref{OB_as_boundary_1}, we have
\begin{equation*}
\begin{aligned}
\|\mathbb{T}\|_{L^{1}}=&\int{(\mathbb{T}_{11}^{2}+2\mathbb{T}_{12}^{2} + \mathbb{T}_{22}^{2})^\frac{1}{2}}\mathrm{d}x\mathrm{d}y\\
\leq&\int{(\mathbb{T}_{11}^{2}+2\mathbb{T}_{11}\mathbb{T}_{22}+\mathbb{T}_{22}^{2})^\frac{1}{2}}\mathrm{d}x\mathrm{d}y\\
=&\int{(\mathbb{T}_{11}+\mathbb{T}_{22})}\mathrm{d}x\mathrm{d}y
=\|\mathrm{tr}\mathbb{T}\|_{L^{1}}.
\end{aligned}
\end{equation*}
Moreover, noticing the relation that $\tau = \mathbb{T} -\eta\mathbb{I}$, we can complete the proof by Minkowski inequality. 
\end{proof}
\section{Global well-posedness}\label{part2}
This section is devoted to the proof of Theorem \ref{thm4}.  For clarity of exposition, we confine our presentation to the spatial domain $\Omega=[0,1]\times\mathbb{R}$, while the periodic counterpart $\Omega=[0,1]\times\mathcal{T}$  can be addressable through slight modification  of our arguments, as indicated in Remark \ref{remark1}. We begin with the {\it a priori} estimates of the solutions.
\subsection{The estimates of \texorpdfstring{$(u,\tau,\eta)$}{(u,tau,eta)}}\label{20estun}
\begin{lem}\label{lem5.1}
Under the assumptions of Theorem \ref{thm4},  let $(u,\tau,\eta)$ be the solution of problem \eqref{EQ-t1h}--\eqref{OB_as_boundary_1} on $[0,T]$. Then, for any $t\in[0,T]$, we have
\begin{align*}
&\|\tau\|_{L^{1}}+\|u\|^{2}_{L^{2}}+\|\eta\|^{2}_{L^{2}}
+2\kappa\int_0^{t}{\|\tau\|_{L^{1}}}\mathrm{d}s
+2\mu\int_0^{t}{\|\partial_{x}u\|^{2}_{L^{2}}}\mathrm{d}s
+2\varepsilon\int_0^{t}{\|\nabla\eta\|^{2}_{L^{2}}}\mathrm{d}s
\leq J_{0},\\
&\|\tau\|^{2}_{L^{2}}+4\kappa\int_0^{t}{\|\tau\|^{2}_{L^{2}}}\mathrm{d}s+\varepsilon\int_0^{t}{\|\nabla\tau\|^{2}_{L^{2}}}\mathrm{d}s
\leq J_{1},
\end{align*}
where $J_{0}$ and $J_{1}$ are defined in \eqref{2u0} and \eqref{2tau0}  respectively.
\end{lem}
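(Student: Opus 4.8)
The plan is to prove the two inequalities separately: the first is a ``free'' estimate built on a trace cancellation between the velocity and stress energies, while the second requires the anisotropic machinery of Lemma \ref{2.3} to compensate for the missing vertical dissipation.

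For the first inequality, I would begin with three elementary identities. Testing \eqref{EQ-t1h}$_1$ against $u$ and using $\mathrm{div}\,u=0$ together with the boundary conditions \eqref{OB_as_boundary_1} (so that the convection, pressure, and boundary terms drop) gives $\frac{1}{2}\frac{d}{dt}\|u\|_{L^2}^2+\mu\|\partial_x u\|_{L^2}^2=-\int\tau:\nabla u$. Testing \eqref{EQ-t1h}$_2$ against $\eta$ gives $\frac{1}{2}\frac{d}{dt}\|\eta\|_{L^2}^2+\varepsilon\|\nabla\eta\|_{L^2}^2=0$, while integrating \eqref{EQ-t1h}$_2$ in space shows that $\|\eta\|_{L^1}=\|\eta_0\|_{L^1}$ is conserved. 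The key is the stress bound: passing to $\mathbb{T}=\tau+\eta\mathbb{I}$, whose trace $\Gamma_3$ obeys the scalar parabolic equation \eqref{Gamma}$_4$, I integrate \eqref{Gamma}$_4$ over $\Omega$; the transport and diffusion terms vanish, and the quadratic coupling reassembles as $2\int\mathbb{T}:\nabla u=2\int\tau:\nabla u$ (the $\eta\mathbb{I}$ contribution contracts to $\eta\,\mathrm{div}\,u=0$). This is precisely the quantity produced by the velocity identity, so adding the two kills the indefinite coupling and yields a closed identity for $\|\Gamma_3\|_{L^1}+\|u\|_{L^2}^2$, where I use $\Gamma_3=\mathrm{tr}\,\mathbb{T}\geq0$ from Lemma \ref{3.1} to identify $\int\Gamma_3=\|\Gamma_3\|_{L^1}$. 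Integrating in time, converting $\|\mathrm{tr}\,\mathbb{T}\|_{L^1}$ to $\|\tau\|_{L^1}$ by Corollary \ref{cor1}, and adding back the $\eta$ identity gives the first bound with an explicit $J_0$ depending on the data and linearly on $T$ (the linear growth coming from the $\kappa\eta$ source in \eqref{Gamma}$_4$).

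For the second inequality, I would test \eqref{EQ-t1h}$_3$ against $\tau$ in the Frobenius inner product. After multiplying by two, the transport term vanishes and the relaxation and diffusion terms supply $4\kappa\|\tau\|_{L^2}^2+2\varepsilon\|\nabla\tau\|_{L^2}^2$, leaving the indefinite contributions $\int(\nabla u\,\tau+\tau\nabla^\top u):\tau$ and $\int\eta\,\mathbb{D}(u):\tau$ to be controlled. Because there is no vertical dissipation on $u$, I cannot place $\nabla u$ in $L^2$; instead I integrate by parts to move the velocity derivative off $u$, turning each term into triple products of the schematic form $\int|u|\,|\tau|\,|\nabla\tau|$, $\int|u|\,|\nabla\eta|\,|\tau|$, and $\int|u|\,|\eta|\,|\nabla\tau|$, in which $u$ now carries no derivative. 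To each I apply the triple-product inequality of Lemma \ref{2.3} with $u$ in the role of $f$ (so only $\|u\|_{L^2}$ and $\|\partial_x u\|_{L^2}$ enter, both controlled by the first bound), a factor of $\tau$ or $\eta$ as $g$, and the gradient factor as $h\in L^2$; a Young inequality absorbs the resulting powers of $\|\nabla\tau\|_{L^2}$ into $2\varepsilon\|\nabla\tau\|_{L^2}^2$, which is why only $\varepsilon\|\nabla\tau\|_{L^2}^2$ survives on the left. What remains is a differential inequality $\frac{d}{dt}\|\tau\|_{L^2}^2+4\kappa\|\tau\|_{L^2}^2+\varepsilon\|\nabla\tau\|_{L^2}^2\leq g(t)\|\tau\|_{L^2}^2+h(t)$ with $g,h$ assembled from $\|u\|_{L^2}$, $\|\partial_x u\|_{L^2}$, $\|\eta\|_{L^2}$, $\|\nabla\eta\|_{L^2}$; since $\|\partial_x u\|_{L^2},\|\nabla\eta\|_{L^2}\in L^2_t$ and $\|u\|_{L^2},\|\eta\|_{L^2}\in L^\infty_t$ by the first bound, $g$ and $h$ are integrable in time, and Gronwall's inequality then delivers the second bound. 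The main obstacle is exactly this last step: arranging the integration by parts and the precise assignment of factors in Lemma \ref{2.3} so that $\partial_y u$ never appears and every leftover time coefficient is $L^1_t$-integrable; once the trace cancellation is noticed, the first bound is comparatively routine.
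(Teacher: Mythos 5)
Your proposal is correct and follows essentially the same route as the paper's proof: the same trace cancellation between the velocity energy (tested against $2u$) and the spatial integral of $\mathrm{tr}\,\mathbb{T}$ (using nonnegativity from Lemma \ref{3.1} and Corollary \ref{cor1}, with the $\kappa\eta$ source handled by conservation of $\int\eta$, yielding the linear-in-$T$ term in $J_0$), and the same $\tau$-energy estimate in which integration by parts moves all derivatives off $u$ so that Lemma \ref{2.3} applies with $u$ in the role of $f$, followed by Young's inequality and Gronwall. The only slip is trivial bookkeeping: since the integrated trace equation produces $2\int\tau:\nabla u$, the velocity equation must be tested against $2u$ (as the paper does) rather than $u$ for the coupling to cancel exactly.
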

\begin{proof}
Multiplying $\eqref{EQ-t2h}_{1}$ and $\eqref{EQ-t2h}_{2}$ by $2u$ and $2\eta$ respectively, and taking the trace of $\eqref{EQ-t2h}_{3}$, then integrating the resulting equation over $[0,1]\times\mathbb{R}$, we have 

\begin{align}\label{eq_T_L1}
&\frac{\mathrm{d}}{\mathrm{d}t}\Big(\|\mathrm{tr}\mathbb{T}\|_{L^{1}}+\|u\|^{2}_{L^{2}}+\|\eta\|^{2}_{L^{2}}\Big)+2\kappa\|\mathrm{tr}\mathbb{T}\|_{L^{1}}
+2\mu\|\partial_x u\|^{2}_{L^{2}}+2\varepsilon\|\nabla\eta\|^{2}_{L^{2}} \notag\\
&\quad=4\kappa\int{\eta}\mathrm{d}x\mathrm{d}y = 4\kappa\|\eta_0\|_{L^1},
\end{align}
where the following fact is used:
\begin{equation*}
\begin{aligned}
\int{\mathrm{tr}(\nabla u\mathbb{T}+\mathbb{T}\nabla^\top u)}\mathrm{d}x\mathrm{d}y
=&2\int{(\partial_{x}u_{1}\mathbb{T}_{11}+\partial_{y}u_{2}\mathbb{T}_{22}+\partial_{x}u_{2}\mathbb{T}_{12}
+\partial_{y}u_{1}\mathbb{T}_{21})}\mathrm{d}x\mathrm{d}y\\
=&-2\int{\diver\mathbb{T}\cdot u}\mathrm{d}x\mathrm{d}y.
\end{aligned}
\end{equation*}
Integrating \eqref{eq_T_L1} over $[0,t]$, we get  
\begin{align}\label{2u0}
&\|\tau\|_{L^{1}}+\|u\|^{2}_{L^{2}}+\|\eta\|^{2}_{L^{2}}
+2\kappa\int_0^{t}{\|\tau\|_{L^{1}}}\mathrm{d}s
+2\mu\int_0^{t}{\|\partial_{x}u\|^{2}_{L^{2}}}\mathrm{d}s
+2\varepsilon\int_0^{t}{\|\nabla\eta\|^{2}_{L^{2}}}\mathrm{d}s\notag\\
\leq\,& \sqrt{2}\|\tau_0\|_{L^{1}}+(CT+1)\|\eta_0\|_{L^{1}}+\|\eta_0\|_{L^{2}}^2+\|u_0\|^{2}_{L^{2}}=:J_{0},
\end{align} 
where  the fact that $\tau=\mathbb{T}-\eta\mathbb{I}$ and Corollary \ref{cor1} are used.  

Next, multiplying $\eqref{EQ-t1h}_3$ by $\tau$ and integrating the result over ${[0,1]\times\mathbb{R}}$, we have 
\begin{align}\label{2tua}
&\frac{1}{2}\frac{\mathrm{d}}{\mathrm{d}t}\|\tau\|^{2}_{L^{2}}+2\kappa\|\tau\|^{2}_{L^{2}}+\varepsilon\|\nabla\tau\|^{2}_{L^{2}} \notag\\
&\quad=\int{((\nabla u)\tau+\tau\nabla^\top u):\tau}\mathrm{d}x\mathrm{d}y+ \int{\eta(\nabla u+\nabla^\top u):\tau}\mathrm{d}x\mathrm{d}y \notag\\
&\quad=:E_{1}+E_{2}.
\end{align} 
Thanks to Poincar\'{e} inequality and Lemma \ref{2.3}, and integration by parts, $E_1$ can be estimated as follows:
\begin{equation*}
\begin{aligned}
E_{1}=&-\int{(u_{l}\partial_{m}\tau_{ln}\tau_{nm}+u_{l}\tau_{ln}\partial_{m}\tau_{nm}+\partial_{n}\tau_{ml}u_{l}\tau_{nm}
+\tau_{ml}u_{l}\partial_{n}\tau_{nm})}\mathrm{d}x\mathrm{d}y\\
\leq&C\|\nabla\tau\|_{L^{2}}\|\tau\|^{\frac{1}{2}}_{L^{2}}\|\partial_{y}\tau\|^{\frac{1}{2}}_{L^{2}}\|u\|^{\frac{1}{2}}_{L^{2}}
(\|u\|_{L^{2}}+\|\partial_x u\|_{L^{2}})^{\frac{1}{2}}\\
\leq&\frac{\varepsilon}{4}\|\nabla\tau\|^{2}_{L^{2}}+C\|\tau\|^{2}_{L^{2}}\|u\|^{2}_{L^{2}}\|\partial_x u\|^{2}_{L^{2}}.
\end{aligned}
\end{equation*}
Similarly, for $E_{2}$, we have 
\begin{align*} 
E_{2}=&-\int{(\eta\partial_{n}\tau_{nm}u_{m}+\eta u_{n}\partial_{m}\tau_{nm})}\mathrm{d}x\mathrm{d}y
-\int{(\partial_{n}\eta\tau_{nm}u_{m}+\partial_{m}\eta u_{n}\tau_{nm})}\mathrm{d}x\mathrm{d}y\\
\leq&C\|\nabla\tau\|_{L^{2}}\|\eta\|^{\frac{1}{2}}_{L^{2}}\|\partial_{y}\eta\|^{\frac{1}{2}}_{L^{2}}\|u\|^{\frac{1}{2}}_{L^{2}}(\|u\|_{L^{2}}+\|\partial_x u\|_{L^{2}})^{\frac{1}{2}}\\
&+C\|\nabla\eta\|_{L^{2}}\|\tau\|^{\frac{1}{2}}_{L^{2}}\|\partial_y \tau\|^{\frac{1}{2}}_{L^{2}}\|u\|^{\frac{1}{2}}_{L^{2}}(\|u\|_{L^{2}}+\|\partial_x u\|_{L^{2}})^{\frac{1}{2}}\\
\leq&\frac{\varepsilon}{4}\|\nabla\tau\|^{2}_{L^{2}}+ \varepsilon\|\nabla\eta\|^{2}_{L^{2}}+C\big(\|\tau\|^{2}_{L^{2}}\|u\|^{2}_{L^{2}}\|\partial_x u\|^{2}_{L^{2}}
+\|\eta\|^{2}_{L^{2}}\|u\|^{2}_{L^{2}}\|\partial_x u\|^{2}_{L^{2}}\big).
\end{align*} 
Substituting above two inequalities into  \eqref{2tua}, we can obtain
\begin{equation*}
\begin{aligned}
\frac{\mathrm{d}}{\mathrm{d}t}\|\tau\|^{2}_{L^{2}}+4\kappa\|\tau\|^{2}_{L^{2}}+\varepsilon\|\nabla\tau\|^{2}_{L^{2}}
\leq 2\varepsilon\|\nabla\eta\|^{2}_{L^{2}}+C\big(\|\tau\|^{2}_{L^{2}}\|u\|^{2}_{L^{2}}\|\partial_x u\|^{2}_{L^{2}}
+\|\eta\|^{2}_{L^{2}}\|u\|^{2}_{L^{2}}\|\partial_x u\|^{2}_{L^{2}}\big),
\end{aligned}
\end{equation*}
which, with the help of Gronwall inequality and \eqref{2u0}, offers that 
\begin{align}\label{2tau0}
&\|\tau\|^{2}_{L^{2}}+4\kappa\int_0^{t}{\|\tau\|^{2}_{L^{2}}}\mathrm{d}s
+\varepsilon\int_0^{t}{\|\nabla\tau\|^{2}_{L^{2}}}\mathrm{d}s \notag\\
&\quad\leq C\mathrm{exp}\left(C\int_0^{t}{\|u\|^{2}_{L^{2}}\|\partial_x u\|^{2}_{L^{2}}}\mathrm{d}s\right)\left(\|\tau_{0}\|^{2}_{L^{2}}
+\int_0^{t}{(2\varepsilon\|\nabla\eta\|^{2}_{L^{2}}+
\|\eta\|^{2}_{L^{2}}\|u\|^{2}_{L^{2}}\|\partial_x u\|^{2}_{L^{2}})}\mathrm{d}s\right) \notag\\
&\quad\leq C\mathrm{e}^{CJ^{2}_{0}}\left(\|\tau_{0}\|^{2}_{L^{2}}+J_{0}+\frac{1}{\mu}J^{3}_{0}\right)=:J_1.
\end{align} 
The proof is finished by \eqref{2u0} and \eqref{2tau0}.
\end{proof}
\subsection{The first-order estimates of \texorpdfstring{$(u,\tau,\eta)$}{(u,tau,eta)}}\label{21estun}
\begin{lem}\label{lem5.2}
Under the assumptions of Theorem \ref{thm4},  let $(u,\tau,\eta)$ be the solution of problem \eqref{EQ-t1h}--\eqref{OB_as_boundary_1} on $[0,T]$. Then, for any $t\in[0,T]$, we have
\begin{align*}
&\|\nabla\eta\|^{2}_{L^{2}} +\|\nabla\tau\|^{2}_{L^{2}}+\mu\|\partial_x u\|^{2}_{L^{2}}+4\|\partial_y u\|^{2}_{L^{2}}+\int^t_0{\|\partial_t u\|^{2}_{L^{2}}}\mathrm{d}s+4\mu\int^t_0{\|\partial_{x}\partial_{y}u\|^{2}_{L^{2}}}\mathrm{d}s\\
&\quad\quad\quad
+4\kappa\int^t_0{\|\nabla\tau\|^{2}_{L^{2}}}\mathrm{d}s
+\varepsilon\int^t_0{\|\nabla^2\eta\|^{2}_{L^{2}}}\mathrm{d}s+\varepsilon\int^t_0{\|\nabla^2\tau\|^{2}_{L^{2}}}\mathrm{d}s \leq J_2,
\end{align*}
where $J_2$ is defined in \eqref{2tau1_1}.
\end{lem}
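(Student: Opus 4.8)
The plan is to close a single first-order energy inequality for all three unknowns simultaneously, using the full dissipation available for $\eta$ and $\tau$ together with the anisotropic inequalities of Lemma \ref{2.3} to compensate for the missing vertical viscosity of $u$. Introduce the energy and dissipation functionals
$$
\mathcal{E}(t):=\|\nabla\eta\|_{L^2}^2+\|\nabla\tau\|_{L^2}^2+\mu\|\partial_x u\|_{L^2}^2+4\|\partial_y u\|_{L^2}^2,
$$
$$
\mathcal{D}(t):=\|\partial_t u\|_{L^2}^2+4\mu\|\partial_x\partial_y u\|_{L^2}^2+4\kappa\|\nabla\tau\|_{L^2}^2+\varepsilon\|\nabla^2\eta\|_{L^2}^2+\varepsilon\|\nabla^2\tau\|_{L^2}^2.
$$
The goal is a differential inequality $\frac{\mathrm{d}}{\mathrm{d}t}\mathcal{E}+\mathcal{D}\le g(t)\mathcal{E}+(\text{integrable lower-order terms})$ with $g\in L^1(0,T)$ controlled by the zeroth-order bounds of Lemma \ref{lem5.1}; integrating and applying Gronwall's inequality then yields the claim and defines $J_2$ in \eqref{2tau1_1}.

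Each constituent is produced by a standard test. Testing \eqref{EQ-t1h}$_2$ against $-\Delta\eta$ gives $\tfrac12\frac{\mathrm{d}}{\mathrm{d}t}\|\nabla\eta\|_{L^2}^2+\varepsilon\|\nabla^2\eta\|_{L^2}^2$ with a convective commutator on the right; testing \eqref{EQ-t1h}$_3$ against $-\Delta\tau$ gives $\tfrac12\frac{\mathrm{d}}{\mathrm{d}t}\|\nabla\tau\|_{L^2}^2+2\kappa\|\nabla\tau\|_{L^2}^2+\varepsilon\|\nabla^2\tau\|_{L^2}^2$, with convective, stretching ($\nabla(\nabla u\,\tau+\tau\nabla^\top u)$) and coupling ($\nabla(2\eta\mathbb{D}(u))$) terms. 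For the velocity I dot \eqref{EQ-t1h}$_1$ with $\partial_t u$—the pressure drops since $\mathrm{div}\,u=0$—to produce $\|\partial_t u\|_{L^2}^2+\tfrac{\mu}{2}\frac{\mathrm{d}}{\mathrm{d}t}\|\partial_x u\|_{L^2}^2$, and apply $\partial_y$ to \eqref{EQ-t1h}$_1$ and test with $\partial_y u$ (the pressure again drops because $\mathrm{div}\,\partial_y u=0$) to produce $\tfrac12\frac{\mathrm{d}}{\mathrm{d}t}\|\partial_y u\|_{L^2}^2+\mu\|\partial_x\partial_y u\|_{L^2}^2$. Summing these identities with suitable positive weights reconstructs $\frac{\mathrm{d}}{\mathrm{d}t}\mathcal{E}+\mathcal{D}$ on the left.

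The real work is bounding the right-hand nonlinear and coupling terms. Every cubic term produced by the convection and stretching nonlinearities has the schematic form $\int|\nabla u|\,|\nabla w|\,|\nabla w|$ or $\int|\nabla u|\,|\nabla^2 w|\,|\nabla w|$ with $w\in\{\eta,\tau\}$, and the decisive device is the anisotropic triple-product estimate of Lemma \ref{2.3}: it places the vertical derivative $\partial_y$ onto a factor whose $\partial_y$ is controlled and the horizontal derivative $\partial_x$ onto another, so that each such term is dominated by $\tfrac12\mathcal{D}+C(\|u\|_{L^2}^2+\|\partial_x u\|_{L^2}^2+\|\nabla\eta\|_{L^2}^2+\|\nabla\tau\|_{L^2}^2)\,\mathcal{E}$ after Young's inequality, the second-order norms being absorbed by the genuine dissipation in $\mathcal{D}$. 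The velocity–stress coupling $\int\mathrm{div}\,\tau\cdot\partial_t u$ is rewritten by integration by parts in space and time as $-\frac{\mathrm{d}}{\mathrm{d}t}\int\tau:\nabla u+\int\partial_t\tau:\nabla u$, and $\partial_t\tau$ is then replaced from \eqref{EQ-t1h}$_3$; the resulting term $2\int\eta\,\mathbb{D}(u):\nabla u=2\int\eta|\mathbb{D}(u)|^2\ge0$ carries a favorable sign by the nonnegativity of $\eta$ from Lemma \ref{3.1}, while the remaining pieces pair against the stretching and coupling terms of the $\nabla\tau$-estimate and are absorbed as above.

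The main obstacle is structural: $\mathcal{D}$ contains no $\|\partial_y^2 u\|_{L^2}^2$, so $\partial_y u$ is not controlled by dissipation, and a careless estimate of the nonlinearities would demand exactly such a term. The resolution is that the anisotropic inequalities never force a bare vertical second derivative of $u$: the dangerous $\partial_y$ can always be shifted onto $\eta$ or $\tau$ (whose full Hessians are dissipated through $\varepsilon\|\nabla^2\eta\|_{L^2}^2,\ \varepsilon\|\nabla^2\tau\|_{L^2}^2$) or onto the mixed derivative $\partial_x\partial_y u$ (dissipated through $\mu\|\partial_x\partial_y u\|_{L^2}^2$). Once all right-hand terms are of the form $\tfrac12\mathcal{D}+g(t)\mathcal{E}+(\text{lower order})$ with $g\in L^1(0,T)$ by Lemma \ref{lem5.1}, Gronwall's inequality together with $J_0,J_1$ closes the estimate, giving the stated bound with $J_2$ as in \eqref{2tau1_1}.
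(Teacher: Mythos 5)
Your overall skeleton---testing \eqref{EQ-t1h}$_1$ with $\partial_t u$, a $\partial_y$-energy for $u$ (the paper's equivalent test against $-2\partial_y^2u$), the $-\Delta\tau$ and $-\Delta\eta$ tests, anisotropic triple products, and a Gronwall argument whose coefficient is in $L^1(0,T)$ thanks to Lemma \ref{lem5.1}---is exactly the paper's. The genuine gap is in your treatment of the velocity--stress coupling term $\int\diver\tau\cdot\partial_t u\,\mathrm{d}x\mathrm{d}y$. After your integration by parts in space and time,
\begin{equation*}
\int\diver\tau\cdot\partial_t u\,\mathrm{d}x\mathrm{d}y=-\frac{\mathrm{d}}{\mathrm{d}t}\int\tau:\nabla u\,\mathrm{d}x\mathrm{d}y+\int\partial_t\tau:\nabla u\,\mathrm{d}x\mathrm{d}y,
\end{equation*}
substituting \eqref{EQ-t1h}$_3$ makes the term $2\int\eta\,\mathbb{D}(u):\nabla u=2\int\eta|\mathbb{D}(u)|^2\geq 0$ appear on the \emph{right-hand side with a plus sign}: it is a nonnegative quantity you must estimate, not one you may drop. (A favorable sign would require it to enter the left-hand side, as happens in the zeroth-order energy of Lemma \ref{lem5.1}, where the two coupling terms cancel; here the sign is backwards.) Worse, this term and its companion $\int(\nabla u\,\tau+\tau\nabla^\top u):\nabla u$ have the form $\int w|\nabla u|^2$ with $w\in\{\eta,\tau\}$, i.e.\ two full gradients of $u$ and no dissipated factor. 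Lemma \ref{2.3} cannot digest them: one assignment of $(f,g,h)$ requires $\|\partial_y\nabla u\|_{L^2}$ (contains $\partial_y^2u$, absent from $\mathcal{D}$), the other requires $\|\partial_x\nabla u\|_{L^2}$ (contains $\partial_x^2u$, not controlled until Lemma \ref{lem5.4}); and the $L^\infty$ route $\|w\|_{L^\infty}\|\nabla u\|_{L^2}^2$ leaves, after Young against $\varepsilon\|\nabla^2 w\|_{L^2}^2$, a term that is superlinear in $\mathcal{E}$, so the Gronwall inequality becomes nonlinear and only closes locally in time, not on all of $[0,T]$. There is also no cancellation (``pairing'') of these pieces against the terms generated by the $-\Delta\tau$ test.

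The repair is to abandon the time integration by parts altogether, which is what the paper does: estimate directly
\begin{equation*}
\int\diver\tau\cdot\partial_t u\,\mathrm{d}x\mathrm{d}y\leq \|\nabla\tau\|_{L^2}\|\partial_t u\|_{L^2}\leq \frac{1}{10}\|\partial_t u\|_{L^2}^2+C\|\nabla\tau\|_{L^2}^2,
\end{equation*}
absorb the first term into $\mathcal{D}$, and leave $C\|\nabla\tau\|_{L^2}^2\leq C\mathcal{E}$ for Gronwall (the constant $C$ is trivially integrable on $[0,T]$). With that replacement your plan goes through, including your choice to run the $\eta$-estimate jointly rather than sequentially as the paper does: the convective term $\int|\nabla u||\nabla\eta|^2$ closes in the joint scheme provided the Gronwall weight is placed on $\|\nabla\eta\|_{L^2}^2$, which is $L^1$ in time by Lemma \ref{lem5.1}, and not on $\|\nabla u\|_{L^2}^2$, whose time integral is not available at this stage.
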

\begin{proof}
Multiplying $\eqref{EQ-t1h}_{1}$, $\eqref{EQ-t1h}_{1}$ and $\eqref{EQ-t1h}_{3}$ by $\partial_{t}u$, $-2\partial^2_{y}u$ and $-\Delta\tau$ respectively, summing the results up, then integrating the resulting equation over $[0,1]\times\mathbb{R}$, we obtain
\begin{align}\label{uxuy1}
&\frac{1}{2}\frac{\mathrm{d}}{\mathrm{d}t}(\|\nabla\tau\|^{2}_{L^{2}}+\mu\|\partial_x u\|^{2}_{L^{2}}+2\|\partial_y u\|^{2}_{L^{2}})
+\|\partial_t u\|^{2}_{L^{2}}+2\mu\|\partial_{x}\partial_{y} u\|^{2}_{L^{2}}
+2\kappa\|\nabla\tau\|^{2}_{L^{2}}+\varepsilon\|\nabla^2\tau\|^{2}_{L^{2}}\nonumber\\
&\quad=\int{\diver\tau\cdot \partial_t u}\mathrm{d}x\mathrm{d}y
-\int{u\cdot\nabla u\cdot \partial_t u}\mathrm{d}x\mathrm{d}y
-2\int{\diver\tau\cdot\partial^2_{y}u}\mathrm{d}x\mathrm{d}y\nonumber\\
&\quad\quad-2\int{u\cdot\nabla u\cdot\partial^2_{y}u}\mathrm{d}x\mathrm{d}y
+\int{u\cdot\nabla\tau:\Delta\tau}\mathrm{d}x\mathrm{d}y\nonumber\\
&\quad\quad-\int{((\nabla u)\tau+\tau\nabla^\top u):\Delta\tau}\mathrm{d}x\mathrm{d}y
-\int{\eta(\nabla u+\nabla^\top u):\Delta\tau}\mathrm{d}x\mathrm{d}y =:\sum_{i=1}^{7}F_i.
\end{align}
By H\"older inequality and Young inequality, we have
\begin{equation}\label{}
\begin{aligned}
F_1\leq C\|\nabla\tau\|_{L^{2}}\|\partial_t u\|_{L^{2}}\leq\frac{1}{10}\|\partial_t u\|^2_{L^{2}}+C\|\nabla\tau\|^2_{L^{2}}.
\end{aligned}
\end{equation}
The estimate of $F_2$ is more delicate. Notice that
\begin{equation}\label{F'2}
\begin{aligned}
F_2=&-\int{u_1\partial_{x}u_1\partial_{t}u_1}\mathrm{d}x\mathrm{d}y
-\int{u_1\partial_{x}u_2\partial_{t}u_2}\mathrm{d}x\mathrm{d}y\\
&-\int{u_2\partial_{y}u_1\partial_{t}u_1}\mathrm{d}x\mathrm{d}y
-\int{u_2\partial_{y}u_2\partial_{t}u_2}\mathrm{d}x\mathrm{d}y=:\sum_{i=1}^{4}F_{2i}.
\end{aligned}
\end{equation}
Then, by Lemma \ref{2.3},  Young  inequality, Poincar\'{e} inequality, and the fact $\partial_{y}u_2=-\partial_{x}u_1$, we can bound $F_{21}$--$F_{24}$ as follows:
\begin{align*}
F_{21}\leq&C\|\partial_{t}u_1\|_{L^{2}}\|\partial_{x}u_1\|^{\frac{1}{2}}_{L^{2}}\|\partial_{x}\partial_{y}u_1\|^{\frac{1}{2}}_{L^{2}}
\|u_1\|^{\frac{1}{2}}_{L^{2}}(\|u_1\|_{L^{2}}+\|\partial_{x}u_1\|_{L^{2}})^{\frac{1}{2}}\\
\leq&\frac{1}{10}\|\partial_{t}u\|^2_{L^{2}}+\frac{\mu}{8}\|\partial_{x}\partial_{y}u\|^{2}_{L^{2}}
+C\|\partial_{x}u\|^{4}_{L^{2}}\|u\|^2_{L^{2}},\\
F_{22}\leq&C\|\partial_{t}u_2\|_{L^{2}}\|\partial_{x}u_2\|^{\frac{1}{2}}_{L^{2}}\|\partial_{x}\partial_{y}u_2\|^{\frac{1}{2}}_{L^{2}}
\|u_1\|^{\frac{1}{2}}_{L^{2}}(\|u_1\|_{L^{2}}+\|\partial_{x}u_1\|_{L^{2}})^{\frac{1}{2}}\\
\leq&\frac{1}{10}\|\partial_{t}u\|^2_{L^{2}}+\frac{\mu}{8}\|\partial_{x}\partial_{y}u\|^{2}_{L^{2}}
+C\|\partial_{x}u\|^{4}_{L^{2}}\|u\|^2_{L^{2}},\\
F_{23}\leq&C\|\partial_{t}u_1\|_{L^{2}}\|u_2\|^{\frac{1}{2}}_{L^{2}}\|\partial_{y}u_2\|^{\frac{1}{2}}_{L^{2}}\|\partial_{y}u_1\|^{\frac{1}{2}}_{L^{2}}
(\|\partial_{y}u_1\|_{L^{2}}+\|\partial_{x}\partial_{y}u_1\|_{L^{2}})^{\frac{1}{2}}\\
\leq&\frac{1}{10}\|\partial_{t}u\|^2_{L^{2}}+\frac{\mu}{8}\|\partial_{x}\partial_{y}u\|^{2}_{L^{2}}
+C\|\partial_{y}u\|^{2}_{L^{2}}\|u\|^2_{L^{2}}\|\partial_{x}u\|^{2}_{L^{2}},\\
F_{24}\leq&C\|\partial_{t}u_2\|_{L^{2}}\|\partial^2_{y}u_2\|^{\frac{1}{2}}_{L^{2}}\|\partial_{y}u_2\|^{\frac{1}{2}}_{L^{2}}
\|u_2\|^{\frac{1}{2}}_{L^{2}}(\|u_2\|_{L^{2}}+\|\partial_{x}u_2\|_{L^{2}})^{\frac{1}{2}}\\
\leq&\frac{1}{10}\|\partial_{t}u\|^2_{L^{2}}+\frac{\mu}{8}\|\partial_{x}\partial_{y}u\|^{2}_{L^{2}}
+C\|\partial_{x}u\|^{4}_{L^{2}}\|u\|^2_{L^{2}}.
\end{align*}
Inserting above estimates of $F_{21}$--$F_{24}$ to \eqref{F'2}, we get  
\begin{equation}\label{eq_F2}
\begin{aligned}
F_{2}\leq \frac{2}{5}\|\partial_{t}u\|^2_{L^{2}}+\frac{\mu}{2}\|\partial_{x}\partial_{y}u\|^{2}_{L^{2}}
+C\|\partial_{x}u\|^{4}_{L^{2}}\|u\|^2_{L^{2}}+C\|\partial_{y}u\|^{2}_{L^{2}}\|u\|^2_{L^{2}}\|\partial_{x}u\|^{2}_{L^{2}}.
\end{aligned}
\end{equation}
$F_3$ can be bounded directly by 
\begin{equation}\label{F'3}
\begin{aligned}
F_3\leq C\|\nabla^2\tau\|_{L^{2}}\|\partial_y u\|_{L^{2}}
\leq\frac{\varepsilon}{8}\|\nabla^2\tau\|^2_{L^{2}}+C\|\partial_y u\|^2_{L^{2}}.
\end{aligned}
\end{equation}
Due to $\diver u=0$, from integration by parts, we have that   
\begin{align}\label{F'4}
F_4=&2\int{\partial_{y}u_i\partial_{i}u_j\partial_{y}u_j}\mathrm{d}x\mathrm{d}y
+\int u_i\partial_{i}\big(|\partial_y u_j|^2\big) \mathrm{d}x\mathrm{d}y\notag\\
=& 
2\int{\partial_{y}u_1\partial_{x}u_2\partial_{y}u_2}\mathrm{d}x\mathrm{d}y+2\int{\partial_{y}u_2\partial_{y}u_2\partial_{y}u_2}\mathrm{d}x\mathrm{d}y 
=: F_{41} + F_{42}.
\end{align}
By virtue of $\diver u=0$, $\|u\|_{L^{2}}\leq C\|\partial_{x}u\|_{L^{2}}$ and Lemma \ref{2.3} that
\begin{align}\label{F'41} 
F_{41}\leq&C\|\partial_{x}u_2\|_{L^{2}}\|\partial^2_{y}u_2\|^{\frac{1}{2}}_{L^{2}}\|\partial_{y}u_2\|^{\frac{1}{2}}_{L^{2}}
\|\partial_{y}u_1\|^{\frac{1}{2}}_{L^{2}}(\|\partial_{y}u_1\|_{L^{2}}+\|\partial_{x}\partial_{y}u_1\|_{L^{2}})^{\frac{1}{2}}\nonumber\\
\leq&C\|\partial_{x}u\|_{L^{2}}\|\partial_{x}\partial_{y}u\|^{\frac{1}{2}}_{L^{2}}
\|\partial_{y}u\|_{L^{2}}\|\partial_{x}\partial_{y}u\|^{\frac{1}{2}}_{L^{2}}\nonumber\\
\leq&\frac{\mu}{8}\|\partial_{x}\partial_{y}u\|^{2}_{L^{2}}
+C\|\partial_{x}u\|^{2}_{L^{2}}\|\partial_{y}u\|^{2}_{L^{2}},\nonumber\\
F_{42}\leq&C\|\partial_{y}u_2\|_{L^{2}}\|\partial^2_{y}u_2\|^{\frac{1}{2}}_{L^{2}}\|\partial_{y}u_2\|^{\frac{1}{2}}_{L^{2}}
\|\partial_{y}u_2\|^{\frac{1}{2}}_{L^{2}}(\|\partial_{y}u_2\|_{L^{2}}+\|\partial_{x}\partial_{y}u_2\|_{L^{2}})^{\frac{1}{2}}\nonumber\\
\leq&C\|\partial_{x}u\|_{L^{2}}\|\partial_{x}\partial_{y}u\|^{\frac{1}{2}}_{L^{2}}\|\partial_{y}u_2\|_{L^{2}}
\|\partial_{x}\partial_{y}u\|^{\frac{1}{2}}_{L^{2}}\nonumber\\
\leq&\frac{\mu}{8}\|\partial_{x}\partial_{y}u\|^{2}_{L^{2}}
+C\|\partial_{x}u\|^{2}_{L^{2}}\|\partial_{y}u\|^{2}_{L^{2}}.\notag
\end{align}
According to the estimate of $F_{41}$ and $F_{44}$, we obtain
\begin{equation}
\begin{aligned}
F_{4}\leq \frac{\mu}{4}\|\partial_{x}\partial_{y}u\|^{2}_{L^{2}}
+C\|\partial_{x}u\|^{2}_{L^{2}}\|\partial_{y}u\|^{2}_{L^{2}}.
\end{aligned}
\end{equation}
By Lemma \ref{2.3}, $F_{5}$ can be estimated directly:
\begin{align}\label{F'5}
F_{5}\leq&C\|\nabla^2\tau\|_{L^{2}}\|\nabla\tau\|^{\frac{1}{2}}_{L^{2}}\|\nabla\partial_{y}\tau\|^{\frac{1}{2}}_{L^{2}}
\|u\|^{\frac{1}{2}}_{L^{2}}(\|u\|_{L^{2}}+\|\partial_{x}u\|_{L^{2}})^{\frac{1}{2}}\notag\\
\leq&\frac{\varepsilon}{8}\|\nabla^2\tau\|^2_{L^{2}}+C\|\nabla\tau\|^{2}_{L^{2}}\|u\|^2_{L^{2}}\|\partial_{x}u\|^{2}_{L^{2}}.
\end{align} 
To bound $F_{6}$, we divide $F_{6}$ into two parts:
\begin{equation}\label{F'6}
\begin{aligned}
F_6=&-2\int \Big((\partial_{x}u_1\tau_{11}+\partial_{x}u_2\tau_{21})\Delta\tau_{11}
+\partial_{x}u_2\tau_{22}\Delta\tau_{21} \Big)\mathrm{d}x\mathrm{d}y\\
&-2\int{\Big((\partial_{y}u_1\tau_{21}+\partial_{y}u_2\tau_{22})\Delta\tau_{22}
+\partial_{y}u_1\tau_{11}\Delta\tau_{12}\Big)}\mathrm{d}x\mathrm{d}y=:F_{61}+F_{62}.
\end{aligned}
\end{equation}
Lemma \ref{2.3} and Young  inequality imply that
\begin{align*}
F_{61}\leq&C\|\nabla^2\tau\|_{L^{2}}\|\partial_{x}u\|^{\frac{1}{2}}_{L^{2}}\|\partial_{x}\partial_{y}u\|^{\frac{1}{2}}_{L^{2}}
\|\tau\|^{\frac{1}{2}}_{L^{2}}(\|\tau\|_{L^{2}}+\|\partial_{x}\tau\|_{L^{2}})^{\frac{1}{2}}\nonumber\\
\leq&\frac{\varepsilon}{16}\|\nabla^2\tau\|^2_{L^{2}}+\frac{\mu}{16}\|\partial_{x}\partial_{y}u\|^2_{L^{2}}
+C\|\partial_{x}u\|^2_{L^{2}}(\|\tau\|^4_{L^{2}}+\|\tau\|^2_{L^{2}}\|\nabla\tau\|^2_{L^{2}}),\nonumber\\
F_{62}\leq&C\|\nabla^2\tau\|_{L^{2}}\|\tau\|^{\frac{1}{2}}_{L^{2}}\|\partial_{y}\tau\|^{\frac{1}{2}}_{L^{2}}
\|\partial_{y}u\|^{\frac{1}{2}}_{L^{2}}(\|\partial_{y}u\|_{L^{2}}+\|\partial_{x}\partial_{y}u\|_{L^{2}})^{\frac{1}{2}}\nonumber\\
\leq&\frac{\varepsilon}{16}\|\nabla^2\tau\|^2_{L^{2}}+\frac{\mu}{16}\|\partial_{x}\partial_{y}u\|^2_{L^{2}}+C\|\partial_{y}u\|^2_{L^{2}}
\|\tau\|^2_{L^{2}}\|\nabla\tau\|^2_{L^{2}}.
\end{align*}
With the help of above two inequalities, $F_{6}$ can be bounded  by
\begin{equation*}
\begin{aligned}
F_{6}\leq\frac{\varepsilon}{8}\|\nabla^2\tau\|^2_{L^{2}}+\frac{\mu}{8}\|\partial_{x}\partial_{y}u\|^2_{L^{2}}
+C(\|\partial_{x}u\|^2_{L^{2}}+4\|\partial_{y}u\|^2_{L^{2}})(\|\tau\|^4_{L^{2}}+\|\tau\|^2_{L^{2}}\|\nabla\tau\|^2_{L^{2}}).
\end{aligned}
\end{equation*}
Similarly, we have
\begin{equation*}
\begin{aligned}
F_{7}\leq&\frac{\varepsilon}{8}\|\nabla^2\tau\|^2_{L^{2}}+\frac{\mu}{8}\|\partial_{x}\partial_{y}u\|^2_{L^{2}}+C(\|\partial_{x}u\|^2_{L^{2}}
+4\|\partial_{y}u\|^2_{L^{2}})(\|\eta\|^4_{L^{2}}+\|\eta\|^2_{L^{2}}\|\nabla\eta\|^2_{L^{2}}).
\end{aligned}
\end{equation*}
Inserting $F_{1}$--$F_{7}$ into \eqref{uxuy1}, we can obtain
\begin{align*}
&\frac{\mathrm{d}}{\mathrm{d}t}\Big(\|\nabla\tau\|^{2}_{L^{2}}+\mu\|\partial_x u\|^{2}_{L^{2}}+2\|\partial_y u\|^{2}_{L^{2}}\Big)+\|u_t\|^{2}_{L^{2}}+2\mu\|\partial_{x}\partial_{y} u\|^{2}_{L^{2}}
+4\kappa\|\nabla\tau\|^{2}_{L^{2}}+\varepsilon\|\nabla^2\tau\|^{2}_{L^{2}}\\
&\quad\leq C\Big(\|\nabla\tau\|^2_{L^{2}}+\mu\|\partial_x u\|^{2}_{L^{2}}+2\|\partial_y u\|^{2}_{L^{2}}\Big)\Big(C+\|u\|^2_{L^{2}}\|\partial_{x}u\|^{2}_{L^{2}}\\
&\quad\quad+\|\partial_{x}u\|^2_{L^{2}}+\|\tau\|^4_{L^{2}}
+\|\tau\|^2_{L^{2}}\|\nabla\tau\|^2_{L^{2}}
+\|\eta\|^2_{L^{2}}\|\nabla\eta\|^2_{L^{2}}+\|\eta\|^4_{L^{2}}\Big),
\end{align*}
which, together with Gronwall inequality, implies
\begin{align}\label{2tau1}
&\|\nabla\tau\|^{2}_{L^{2}}+\mu\|\partial_x u\|^{2}_{L^{2}}+2\|\partial_y u\|^{2}_{L^{2}}+\int^t_0{\|\partial_t u\|^{2}_{L^{2}}}\mathrm{d}s \notag\\
&\quad\quad\quad+2\mu\int^t_0{\|\partial_{x}\partial_{y}u\|^{2}_{L^{2}}}\mathrm{d}s
+4\kappa\int^t_0{\|\nabla\tau\|^{2}_{L^{2}}}\mathrm{d}s
+\varepsilon\int^t_0{\|\nabla^2\tau\|^{2}_{L^{2}}}\mathrm{d}s \notag\\
&\quad\leq C\mathrm{exp}\Big(\int^t_0{C+\|u\|^2_{L^{2}}\|\partial_{x}u\|^{2}_{L^{2}}+
\|\partial_{x}u\|^{2}_{L^{2}}+\|\tau\|^4_{L^{2}}
+\|\tau\|^2_{L^{2}}\|\nabla\tau\|^2_{L^{2}}} \notag\\
&\quad\quad{+\|\eta\|^2_{L^{2}}\|\nabla\eta\|^2_{L^{2}}
+\|\eta\|^4_{L^{2}}}\mathrm{d}s\Big)\Big(\|\nabla\tau_0\|^{2}_{L^{2}}+\|\nabla u_0\|^{2}_{L^{2}}\Big) \notag\\
&\quad\leq C\mathrm{exp}\Big(C(T+J_0^2+J_0+J_1^2+J_0^2T)\Big).
\end{align} 
Multiplying $\nabla\eqref{EQ-t1h}_{2}$ by $\nabla\eta$, integrating the result over $[0,1]\times\mathbb{R}$ by parts, we get  
\begin{align*}
\frac{1}{2}\frac{\mathrm{d}}{\mathrm{d}t}\|\nabla\eta\|^{2}_{L^{2}}+\varepsilon\|\nabla^2\eta\|^{2}_{L^{2}}
=\,&-\int{\nabla\eta\cdot\nabla u\cdot\nabla\eta}\mathrm{d}x\mathrm{d}y 
\leq\, C\|\nabla u\|_{L^{2}}\|\nabla\eta\|^2_{L^{4}}\\
\leq\,&C\|\nabla u\|_{L^{2}}\|\nabla\eta\|_{L^{2}}(\|\nabla\eta\|_{L^{2}}+\|\nabla^2\eta\|_{L^{2}})\\
\leq\,&\frac{\varepsilon}{2}\|\nabla^2\eta\|^2_{L^{2}}+C\|\nabla u\|^2_{L^{2}}\|\nabla\eta\|^2_{L^{2}}
+C\|\nabla u\|_{L^{2}}\|\nabla\eta\|^2_{L^{2}},
\end{align*} 
which implies 
\begin{equation}\label{eta2}
\begin{aligned}
\frac{\mathrm{d}}{\mathrm{d}t}\|\nabla\eta\|^{2}_{L^{2}}+\varepsilon\|\nabla^2\eta\|^{2}_{L^{2}}
\leq C\|\nabla u\|^2_{L^{2}}\|\nabla\eta\|^2_{L^{2}}+C\|\nabla u\|_{L^{2}}\|\nabla\eta\|^2_{L^{2}},
\end{aligned}
\end{equation}
Integrating \eqref{eta2} over $[0,t]$, it holds that
\begin{equation}\label{eq_eta_2}
\begin{aligned}
\|\nabla\eta\|^{2}_{L^{2}}+\varepsilon\int^t_0{\|\nabla^2\eta\|^{2}_{L^{2}}}\mathrm{d}s
\leq C\mathrm{exp}\Big(C(T+J_0^2+J_0+J_1^2+J_0^2T)\Big)J_0.
\end{aligned}
\end{equation}
Summing \eqref{2tau1} and \eqref{eq_eta_2}, we get  
\begin{align}\label{2tau1_1}
&\|\nabla\eta\|^{2}_{L^{2}} +\|\nabla\tau\|^{2}_{L^{2}}+\mu\|\partial_x u\|^{2}_{L^{2}}+2\|\partial_y u\|^{2}_{L^{2}}+\int^t_0{\|\partial_t u\|^{2}_{L^{2}}}\mathrm{d}s+2\mu\int^t_0{\|\partial_{x}\partial_{y}u\|^{2}_{L^{2}}}\mathrm{d}s \notag\\
&\quad\quad\quad
+4\kappa\int^t_0{\|\nabla\tau\|^{2}_{L^{2}}}\mathrm{d}s
+\varepsilon\int^t_0{\|\nabla^2\eta\|^{2}_{L^{2}}}\mathrm{d}s+\varepsilon\int^t_0{\|\nabla^2\tau\|^{2}_{L^{2}}}\mathrm{d}s \notag\\
&\quad\leq C\mathrm{exp}\Big(C(T+J_0^2+J_0+J_1^2+J_0^2T)\Big)\Big(1 +J_0\Big)=:J_2.
\end{align} 
The proof is complete.
\end{proof}
\subsection{The second-order estimates of \texorpdfstring{$(u,\tau,\eta)$}{u, tau,  eta}}\label{22estun}
In order to make the second-order estimate of $\tau$, $u$ and $\eta$, we first have the following result.
\begin{lem}\label{lem5.3}
Under the assumptions of Theorem \ref{thm4},  let $(u,\tau,\eta)$ be the solution of problem \eqref{EQ-t1h}--\eqref{OB_as_boundary_1} on $[0,T]$. Then, for any $t\in[0,T]$, we have
\begin{align*}
\|\partial_t\tau\|^{2}_{L^{2}}+\|\partial_t\eta\|^{2}_{L^{2}}+\|\partial_t u\|^{2}_{L^{2}}+&\mu\int^t_0{\|\partial_{t}\partial_{x}u\|^{2}_{L^{2}}}\mathrm{d}s
+\varepsilon\int^t_0{\|\nabla\partial_{t}\eta\|^{2}_{L^{2}}}\mathrm{d}s\\
+&4\kappa\int^t_0{\|\partial_t\tau\|^{2}_{L^{2}}}\mathrm{d}s
+\varepsilon\int^t_0{\|\nabla\partial_t\tau\|^{2}_{L^{2}}}\mathrm{d}s
\leq J_3,
\end{align*}
where $J_3$ is defined in \eqref{3t}.
\end{lem}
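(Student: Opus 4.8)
\emph{Plan.} The plan is to differentiate each of \eqref{EQ-t1h}$_1$, \eqref{EQ-t1h}$_2$, \eqref{EQ-t1h}$_3$ in $t$, test the three resulting equations against $\partial_t u$, $\partial_t\eta$, $\partial_t\tau$ respectively, and add them. Since $\diver u=0$ forces $\diver\partial_t u=0$, the pressure term $\int\nabla\partial_t P\cdot\partial_t u$ drops out and the self-transport terms $\int(u\cdot\nabla\partial_t u)\cdot\partial_t u$, $\int(u\cdot\nabla\partial_t\eta)\,\partial_t\eta$, $\int(u\cdot\nabla\partial_t\tau){:}\,\partial_t\tau$ all vanish. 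After integrating by parts in the horizontal viscosity and in the diffusion terms, the left-hand side produces the energy $\frac12\frac{\mathrm d}{\mathrm dt}\big(\|\partial_t u\|_{L^2}^2+\|\partial_t\eta\|_{L^2}^2+\|\partial_t\tau\|_{L^2}^2\big)$ together with the dissipation $\mu\|\partial_x\partial_t u\|_{L^2}^2+\varepsilon\|\nabla\partial_t\eta\|_{L^2}^2+2\kappa\|\partial_t\tau\|_{L^2}^2+\varepsilon\|\nabla\partial_t\tau\|_{L^2}^2$. The objective is then to dominate every remaining term by this dissipation (via Young's inequality) plus an $L^1(0,T)$-in-time multiple of the energy, and to close by Gronwall's inequality using the bounds $J_0,J_1,J_2$ of Lemmas \ref{lem5.1}--\ref{lem5.2}.

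All contributions in which $\partial_t\eta$ or $\partial_t\tau$ carries a full spatial gradient are routine, because these variables enjoy the complete dissipations $\varepsilon\|\nabla\partial_t\eta\|_{L^2}^2$ and $\varepsilon\|\nabla\partial_t\tau\|_{L^2}^2$. For instance $\int(\partial_t u\cdot\nabla\eta)\,\partial_t\eta$, $\int(\partial_t u\cdot\nabla\tau){:}\,\partial_t\tau$, $\int\partial_t\eta\,\mathbb{D}(u){:}\,\partial_t\tau$, $\int\nabla u\,\partial_t\tau{:}\,\partial_t\tau$ and $\int\partial_t\tau\,\nabla^\top u{:}\,\partial_t\tau$ are triple products to which the second inequality of Lemma \ref{2.3} applies directly; routing the controlled gradient into the appropriate slot and invoking the $L^\infty_tH^1$ and $L^2_tH^2$ bounds of Lemmas \ref{lem5.1}--\ref{lem5.2}, each is absorbed into $\tfrac\varepsilon8\|\nabla\partial_t\eta\|_{L^2}^2+\tfrac\varepsilon8\|\nabla\partial_t\tau\|_{L^2}^2$ plus a time-integrable coefficient times the energy. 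The velocity--stress coupling is split rather than cancelled: the momentum contribution $\int\diver\partial_t\tau\cdot\partial_t u$ is estimated at once by $\tfrac\varepsilon8\|\nabla\partial_t\tau\|_{L^2}^2+C\|\partial_t u\|_{L^2}^2$. Note that $\eta$ never enters through an $L^\infty$ norm, only through $\|\eta\|_{L^2}$ and $\|\nabla\eta\|_{L^2}$, so no two-dimensional $L^\infty$ embedding is needed.

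The genuine difficulty, reflecting the absence of vertical dissipation, is concentrated in the terms where a naive estimate would call for controlling a $y$-derivative of $\partial_t u$, namely the quadratic transport term $\int(\partial_t u\cdot\nabla u)\cdot\partial_t u$ and the stress couplings $2\int\eta\,\mathbb{D}(\partial_t u){:}\,\partial_t\tau$ and $\int\nabla\partial_t u\,\tau{:}\,\partial_t\tau$, since only $\partial_x\partial_t u$ is supplied by the dissipation. The resolution rests on two structural facts. First, incompressibility of $\partial_t u$ gives $\partial_y\partial_t u_2=-\partial_x\partial_t u_1$, so among the four components of $\nabla\partial_t u$ only $\partial_y\partial_t u_1$ fails to be controlled by $\mu\|\partial_x\partial_t u\|_{L^2}^2$; thus in the transport term one applies Lemma \ref{2.3} with the $y$-derivative routed onto $\partial_t u_2$ (equivalently onto $\nabla u$, whose mixed derivative $\partial_x\partial_y u$ is controlled in $L^2_t$ by Lemma \ref{lem5.2}). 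Second, the lone uncontrolled component $\partial_y\partial_t u_1$ always appears against a factor whose $y$-derivative is available (a component of $\nabla u$, of $\eta\,\partial_t\tau$, or of $\tau\,\partial_t\tau$), so it is removed by integration by parts in $y$ -- which produces no boundary contribution on $[0,1]\times\mathbb{R}$ or $[0,1]\times\mathcal T$ -- transferring the derivative onto $\partial_x\partial_y u$, $\nabla\partial_t\tau$, $\nabla\tau$ or $\nabla\eta$. After these manipulations every integral is a triple product of quantities carrying at most one controllable derivative apiece, and Lemma \ref{2.3} with Young's inequality absorbs the $\partial_x\partial_t u$ and $\nabla\partial_t\tau$ pieces into the dissipation while leaving a time-integrable multiple of the energy.

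Collecting all the bounds yields a differential inequality $\frac{\mathrm d}{\mathrm dt}\mathcal E+\mathcal D\le A(t)\,\mathcal E$ with $\mathcal E=\|\partial_t u\|_{L^2}^2+\|\partial_t\eta\|_{L^2}^2+\|\partial_t\tau\|_{L^2}^2$, $\mathcal D$ the dissipation above, and $A\in L^1(0,T)$ by Lemmas \ref{lem5.1}--\ref{lem5.2}; Gronwall's inequality then gives the stated estimate, the constant $J_3$ in \eqref{3t} being fixed by $A$ and by $\mathcal E(0)$. To control $\mathcal E(0)$ one reads the time derivatives off the equations at $t=0$: for example $\partial_t u|_{t=0}=\mu\partial_x^2u_0+\diver\tau_0-u_0\cdot\nabla u_0-\nabla P_0$, with $P_0$ recovered from the divergence constraint, so that $\|\partial_t u|_{t=0}\|_{L^2}$ and, similarly, $\|\partial_t\eta|_{t=0}\|_{L^2}$, $\|\partial_t\tau|_{t=0}\|_{L^2}$ are bounded in terms of $\|u_0\|_{H^2}$, $\|\eta_0\|_{H^2}$, $\|\tau_0\|_{H^2}$. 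The main obstacle throughout is precisely the pair of $\nabla\partial_t u$-type terms; once the incompressibility identity $\partial_y\partial_t u_2=-\partial_x\partial_t u_1$ and the $y$-integration by parts are in place, the argument closes in the same manner as Lemmas \ref{lem5.1}--\ref{lem5.2}.
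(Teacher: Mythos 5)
Your proposal is correct and follows essentially the same route as the paper's proof: time-differentiate the system, test with $(\partial_t u,\partial_t\eta,\partial_t\tau)$, use the incompressibility identity $\partial_y\partial_t u_2=-\partial_x\partial_t u_1$ together with integration by parts so that the uncontrolled component $\partial_y\partial_t u_1$ never appears, estimate every trilinear term via Lemma \ref{2.3}, and close with Gronwall using the $L^1(0,T)$ integrability of the coefficients furnished by Lemmas \ref{lem5.1}--\ref{lem5.2}. The only cosmetic differences are that the paper integrates by parts uniformly in all spatial variables in the stress-coupling terms $M_5$ and $M_6$ (moving the derivative off $\partial_t u$ entirely, onto $\nabla\partial_t\tau$, $\nabla\tau$, $\nabla\eta$) rather than only in $y$ on the bad component, and that the paper leaves the initial values $\|(\partial_t u,\partial_t\eta,\partial_t\tau)(0)\|_{L^2}$ implicit in $J_3$, whereas you bound them from the equations at $t=0$.
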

\begin{proof}
Multiplying $\partial_t\eqref{EQ-t1h}_1$, $\partial_t\eqref{EQ-t1h}_2$ and $\partial_t\eqref{EQ-t1h}_3$ by $\partial_t u$, $\partial_t \eta$ and $\partial_t \tau$ respectively, 
summing the results up, and
 integration the result over $[0,1]\times\mathbb{R}$ by parts, we have  
\begin{align}\label{utetat}
&\frac{1}{2}\frac{\mathrm{d}}{\mathrm{d}t}(\|\partial_t\tau\|^{2}_{L^{2}}
+\|\partial_t\eta\|^{2}_{L^{2}}+\|\partial_t u\|^{2}_{L^{2}})+\mu\|\partial_{t}\partial_{x}u\|^{2}_{L^{2}}
+\varepsilon\|\nabla\partial_{t}\eta\|^{2}_{L^{2}}
+2\kappa\|\partial_t\tau\|^{2}_{L^{2}}
+\varepsilon\|\nabla\partial_t\tau\|^{2}_{L^{2}}\nonumber\\
&\quad=\int{\diver\partial_{t}\tau\cdot\partial_{t}u}\mathrm{d}x\mathrm{d}y-
\int{\partial_{t}(u\cdot\nabla u)\cdot\partial_{t}u}\mathrm{d}x\mathrm{d}y -\int{\partial_{t}(u\cdot\nabla\eta)\cdot\partial_{t}\eta}
\mathrm{d}x\mathrm{d}y \nonumber\\
&\quad\quad-\int{\partial_{t}(u\cdot\nabla\tau):\partial_{t}\tau}\mathrm{d}x\mathrm{d}y+\int{\partial_t((\nabla u)\tau+\tau\nabla^\top u):\partial_t\tau}\mathrm{d}x\mathrm{d}y 
\nonumber\\
&\quad\quad+ \int{\partial_t(\eta(\nabla u+\nabla^\top u)):\partial_t\tau}\mathrm{d}x\mathrm{d}y=:\sum_{i=1}^{6}M_{i}.
\end{align}
For $M_1$, we have
\begin{equation} \label{eq_M_1}
M_1\leq C\|\nabla\partial_{t}\tau\|_{L^{2}}\|\partial_t u\|_{L^{2}}\leq\frac{\varepsilon}{8}\|\nabla\partial_{t}\tau\|^2_{L^{2}}+C\|\partial_t u\|^2_{L^{2}}.
\end{equation}
For $M_2$, from integration by parts, we have that
\begin{equation*}\label{}
\begin{aligned}
M_2=& -\int{\partial_{t}u_i\partial_{i}u_j\partial_{t}u_j}\mathrm{d}x\mathrm{d}y
-\frac{1}{2}\int{u_i\partial_{i} \big(| \partial_{t}u_j}|^2\big)\mathrm{d}x\mathrm{d}y\\
=&-\int{ \Big(\partial_{t}u_1\partial_{x}u_1\partial_{t}u_1
+\partial_{t}u_1\partial_{x}u_2\partial_{t}u_2-\partial_{t}u_2\partial_{x}u_1\partial_{t}u_2\Big)}\mathrm{d}x\mathrm{d}y\\
&-\int{\partial_{t}u_2\partial_{y}u_1\partial_{t}u_1}\mathrm{d}x\mathrm{d}y=:M_{21}+M_{22},
\end{aligned}
\end{equation*}
where the fact $\mathrm{div} u = 0$ is used.
With the help of Lemma \ref{2.3}, $\partial_{y}u_2=-\partial_{x}u_1$, Young inequality and Poincar\'{e} inequality, we have
\begin{align*}
M_{21}\leq&C\|\partial_{t}u\|_{L^{2}}\|\partial_{x}u\|^{\frac{1}{2}}_{L^{2}}\|\partial_{x}\partial_{y}u\|^{\frac{1}{2}}_{L^{2}}
\|\partial_{t}u\|^{\frac{1}{2}}_{L^{2}}(\|\partial_{t}u\|_{L^{2}}+\|\partial_{t}\partial_{x}u\|_{L^{2}})^{\frac{1}{2}}\nonumber\\
\leq&\frac{\mu}{16}\|\partial_{t}\partial_{x}u\|^2_{L^{2}}
+C\|\partial_{t}u\|^{2}_{L^{2}}\|\partial_{x}u\|_{L^{2}}\|\partial_{x}\partial_{y}u\|_{L^{2}},\nonumber\\
M_{22}\leq&C\|\partial_{t}u_1\|_{L^{2}}\|\partial_{t}u_2\|^{\frac{1}{2}}_{L^{2}}\|\partial_{t}\partial_yu_2\|^{\frac{1}{2}}_{L^{2}}
\|\partial_{y}u_1\|^{\frac{1}{2}}_{L^{2}}(\|\partial_{y}u_1\|_{L^{2}}+\|\partial_{x}\partial_{y}u_1\|_{L^{2}})^{\frac{1}{2}}\nonumber\\
\leq&\frac{\mu}{16}\|\partial_{t}\partial_{x}u\|^2_{L^{2}}
+C\|\partial_{t}u\|^{2}_{L^{2}}\|\partial_{y}u\|_{L^{2}}\|\partial_{x}\partial_{y}u\|_{L^{2}}.
\end{align*}
The estimations of $M_{21}$ and $M_{22}$ mean that
\begin{equation}  
M_{2}\leq\frac{\mu}{8}\|\partial_{t}\partial_{x}u\|^2_{L^{2}}+C\|\partial_{t}u\|^{2}_{L^{2}}(\|\partial_{y}u\|_{L^{2}}\|\partial_{x}\partial_{y}u\|_{L^{2}}
+\|\partial_{x}u\|_{L^{2}}\|\partial_{x}\partial_{y}u\|_{L^{2}}). 
\end{equation}
By virtue of Lemma \ref{2.3} and Young inequality, integration by parts, the estimate of $M_3$ and $M_4$ are directly given by 
\begin{align}
M_3 + M_4=&  \int{\partial_{t}u_i\eta\partial_{i}\partial_{t}\eta}\mathrm{d}x\mathrm{d}y + \int{\partial_{t}u_i\tau_{\ell m}\partial_{i}\partial_{t}\tau_{\ell m}}\mathrm{d}x\mathrm{d}y
 \notag\\
\leq&C\|\nabla\partial_{t}\eta\|_{L^{2}}\|\eta\|^{\frac{1}{2}}_{L^{2}}\|\partial_{y}\eta\|^{\frac{1}{2}}_{L^{2}}
\|\partial_{t}u\|^{\frac{1}{2}}_{L^{2}}(\|\partial_{t}u\|_{L^{2}}+\|\partial_{t}\partial_{x}u\|_{L^{2}})^{\frac{1}{2}}\notag\\
&+C\|\nabla\partial_{t}\tau\|_{L^{2}}\|\tau\|^{\frac{1}{2}}_{L^{2}}\|\partial_{y}\tau\|^{\frac{1}{2}}_{L^{2}}
\|\partial_{t}u\|^{\frac{1}{2}}_{L^{2}}(\|\partial_{t}u\|_{L^{2}}+\|\partial_{t}\partial_{x}u\|_{L^{2}})^{\frac{1}{2}} \notag\\ 
\leq&\frac{\varepsilon}{8}\Big(\|\nabla\partial_{t}\eta\|^2_{L^{2}} + \|\nabla\partial_{t}\tau\|^2_{L^{2}}\Big)+\frac{\mu}{8}\|\partial_{t}\partial_{x}u\|^2_{L^{2}} \notag\\
&+\|\partial_{t}u\|^{2}_{L^{2}}\|(\eta,\tau)\|^{2}_{L^{2}}\|\nabla(\eta,\tau)\|^{2}_{L^{2}}.
\end{align}
  Using integration by parts, we have
\begin{equation*} 
\begin{aligned}
M_5=& \int ((\partial_t\partial_{j}u_i\tau_{j\ell}  
+\tau_{ij}\partial_t\partial_{j}u_\ell )\partial_{t}\tau_{i\ell}  
+ (\partial_{j}u_i\partial_t\tau_{j\ell}  
+\partial_t\tau_{ij}\partial_{j}u_\ell )\partial_{t}\tau_{i\ell}) \mathrm{d}x\mathrm{d}y\\
=&  -\int (\partial_t u_i\tau_{j\ell}  
+\tau_{ij}\partial_t u_\ell )\partial_{j}\partial_{t}\tau_{i\ell}  \mathrm{d}x\mathrm{d}y -\int (\partial_t u_i\partial_{j}\tau_{j\ell}  
+\partial_{j}\tau_{ij}\partial_t u_\ell )\partial_{t} \tau_{i\ell}  \mathrm{d}x\mathrm{d}y \\
&+\int (\partial_{j}u_i\partial_t\tau_{j\ell}  
+\partial_t\tau_{ij}\partial_{j}u_\ell )\partial_{t}\tau_{i\ell} \mathrm{d}x\mathrm{d}y 
=:M_{51}+M_{52}+M_{53}.
\end{aligned}
\end{equation*}
Thanks to Lemma \ref{2.3} and Poincar\'{e} inequality, we can obtain
\begin{align*}
M_{51}\leq&C\|\nabla\partial_{t}\tau\|_{L^{2}}\|\tau\|^{\frac{1}{2}}_{L^{2}}\|\partial_{y}\tau\|^{\frac{1}{2}}_{L^{2}}
\|\partial_{t}u\|^{\frac{1}{2}}_{L^{2}}(\|\partial_{t}u\|_{L^{2}}+\|\partial_t\partial_{x}u\|_{L^{2}})^{\frac{1}{2}}\nonumber\\
\leq&\frac{\varepsilon}{24}\|\nabla\partial_{t}\tau\|^2_{L^{2}}+\frac{\mu}{16}\|\partial_t\partial_{x}u\|^2_{L^{2}}
+\|\partial_{t}u\|^{2}_{L^{2}}\|\tau\|^{2}_{L^{2}}\|\nabla\tau\|^{2}_{L^{2}},\nonumber\\
M_{52}\leq&C\|\nabla\tau\|_{L^{2}}\|\partial_{t}\tau\|^{\frac{1}{2}}_{L^{2}}\|\partial_t\partial_{y}\tau\|^{\frac{1}{2}}_{L^{2}}
\|\partial_{t}u\|^{\frac{1}{2}}_{L^{2}}(\|\partial_{t}u\|_{L^{2}}+\|\partial_t\partial_{x}u\|_{L^{2}})^{\frac{1}{2}}\nonumber\\
\leq&\frac{\varepsilon}{24}\|\nabla\partial_{t}\tau\|^2_{L^{2}}+\frac{\mu}{16}\|\partial_t\partial_{x}u\|^2_{L^{2}}
+\|\partial_{t}\tau\|^{2}_{L^{2}}\|\nabla\tau\|^4_{L^{2}},\nonumber\\
M_{53}\leq&C\|\nabla u\|_{L^{2}}\|\partial_{t}\tau\|^{\frac{1}{2}}_{L^{2}}\|\partial_t\partial_{y}\tau\|^{\frac{1}{2}}_{L^{2}}
\|\partial_{t}\tau \|^{\frac{1}{2}}_{L^{2}}(\|\partial_{t}\tau\|_{L^{2}}+\|\partial_{t}\partial_{x}\tau\|_{L^{2}})^{\frac{1}{2}}\nonumber\\
\leq&\frac{\varepsilon}{24}\|\nabla\partial_{t}\tau\|^2_{L^{2}} 
+C(1+\|\nabla u\|^2_{L^{2}})\|\partial_{t}\tau\|^{2}_{L^{2}}.
\end{align*}
Adding the estimates of $M_{51}$--$M_{53}$ up, we get
\begin{align}
M_{5}\leq&\frac{\varepsilon}{8}\|\nabla\partial_{t}\tau\|^2_{L^{2}}+\frac{\mu}{8}\|\partial_t\partial_{x}u\|^2_{L^{2}}
+C(\|\partial_{t}\tau\|^{2}_{L^{2}}+\|\partial_{t}u\|^{2}_{L^{2}})(\|u\|^{2}_{L^{2}}\|\partial_{x}u\|^{2}_{L^{2}} \notag\\
&+\|\tau\|^{2}_{L^{2}}\|\nabla\tau\|^{2}_{L^{2}}+\|\nabla\tau\|^4_{L^{2}} + \|\nabla u\|^{2}_{L^{2}} +1).
\end{align} 
Similarly to the analysis of $M_5$, we have 
\begin{equation*}\label{}
\begin{aligned}
M_6&=-2 \int{\partial_{t}u_j\eta\partial_t\partial_{i}\tau_{ij}}\mathrm{d}x\mathrm{d}y
-2 \int{\partial_{t}u_j\partial_{i}\eta\partial_{t}\tau_{ij}}\mathrm{d}x\mathrm{d}y
-2 \int{u_j\partial_{t}\eta\partial_t\partial_{i}\tau_{ij}}\mathrm{d}x\mathrm{d}y\\
&\quad-2 \int{u_j\partial_t\partial_{i}\eta\partial_{t}\tau_{ij}}\mathrm{d}x\mathrm{d}y\\
&=:M_{61}+M_{62}+M_{63}+M_{64},
\end{aligned}
\end{equation*}
where the symmetry of $\tau$ is used.
With the help of Lemma \ref{2.3}, the estimates 
 of $M_{61}$--$M_{64}$ are given by
\begin{align*}
M_{61}\leq&C\|\nabla\partial_{t}\tau\|_{L^{2}}\|\eta\|^{\frac{1}{2}}_{L^{2}}\|\partial_{y}\eta\|^{\frac{1}{2}}_{L^{2}}
\|\partial_{t}u\|^{\frac{1}{2}}_{L^{2}}(\|\partial_{t}u\|_{L^{2}}+\|\partial_{t}\partial_{x}u\|_{L^{2}})^{\frac{1}{2}}\nonumber\\
\leq&\frac{\varepsilon}{32}\|\nabla\partial_{t}\tau\|^2_{L^{2}}+\frac{\mu}{16}\|\partial_{t}\partial_{x}u\|^2_{L^{2}}
+C\|\partial_{t}u\|^{2}_{L^{2}}\|\eta\|^{2}_{L^{2}}\|\nabla\eta\|^{2}_{L^{2}},\nonumber\\
M_{62}\leq&C\|\nabla\eta\|_{L^{2}}\|\partial_{t}\tau\|^{\frac{1}{2}}_{L^{2}}\|\partial_{t}\partial_{y}\tau\|^{\frac{1}{2}}_{L^{2}}
\|\partial_{t}u\|^{\frac{1}{2}}_{L^{2}}(\|\partial_{t}u\|_{L^{2}}+\|\partial_{t}\partial_{x}u\|_{L^{2}})^{\frac{1}{2}}\nonumber\\
\leq&\frac{\varepsilon}{32}\|\nabla\partial_{t}\tau\|^2_{L^{2}}+\frac{\mu}{16}\|\partial_{t}\partial_{x}u\|^2_{L^{2}}
+C\|\partial_{t}\tau\|^{2}_{L^{2}}\|\nabla\eta\|^4_{L^{2}},\nonumber\\
M_{63}\leq&C\|\nabla\partial_{t}\tau\|_{L^{2}}\|\partial_{t}\eta\|^{\frac{1}{2}}_{L^{2}}\|\partial_{t}\partial_{y}\eta\|^{\frac{1}{2}}_{L^{2}}
\|u\|^{\frac{1}{2}}_{L^{2}}(\|u\|_{L^{2}}+\|\partial_{x}u\|_{L^{2}})^{\frac{1}{2}}\nonumber\\
\leq&\frac{\varepsilon}{32}\|\nabla\partial_{t}\tau\|^2_{L^{2}}+\frac{3\varepsilon}{16}\|\nabla\partial_{t}\eta\|^2_{L^{2}}
+C\|\partial_{t}\eta\|^{2}_{L^{2}}\|u\|^{2}_{L^{2}}\|\partial_{x}u\|^{2}_{L^{2}},\nonumber\\
M_{64}\leq&C\|\nabla\partial_{t}\eta\|_{L^{2}}\|\partial_{t}\tau\|^{\frac{1}{2}}_{L^{2}}\|\partial_{t}\partial_{y}\tau\|^{\frac{1}{2}}_{L^{2}}
\|u\|^{\frac{1}{2}}_{L^{2}}(\|u\|_{L^{2}}+\|\partial_{x}u\|_{L^{2}})^{\frac{1}{2}}\nonumber\\
\leq&\frac{\varepsilon}{32}\|\nabla\partial_{t}\tau\|^2_{L^{2}}+\frac{3\varepsilon}{16}\|\nabla\partial_{t}\eta\|^2_{L^{2}}
+C\|\partial_{t}\tau\|^{2}_{L^{2}}\|u\|^{2}_{L^{2}}\|\partial_{x}u\|^{2}_{L^{2}}.
\end{align*}
Summing the above four inequalities up, we get 
\begin{align}\label{eq_M_6}
M_{6}\leq&\frac{\varepsilon}{8}\|\nabla\partial_{t}\tau\|^2_{L^{2}}+\frac{3\varepsilon}{8}\|\nabla\partial_{t}\eta\|^2_{L^{2}}
+\frac{\mu}{8}\|\partial_{t}\partial_{x}u\|^2_{L^{2}}+(\|\partial_{t}u\|^{2}_{L^{2}}\nonumber\\
&+\|\partial_{t}\eta\|^{2}_{L^{2}}+\|\partial_{t}\tau\|^{2}_{L^{2}})(\|u\|^{2}_{L^{2}}\|\partial_{x}u\|^{2}_{L^{2}}
+\|\eta\|^{2}_{L^{2}}\|\nabla\eta\|^{2}_{L^{2}}+\|\nabla\eta\|^4_{L^{2}}).
\end{align}
Substituting \eqref{eq_M_1}--\eqref{eq_M_6} into \eqref{utetat}, we have 
\begin{align*}
&\frac{\mathrm{d}}{\mathrm{d}t}(\|\partial_t\tau\|^{2}_{L^{2}}+\|\partial_t\eta\|^{2}_{L^{2}}+\|\partial_t u\|^{2}_{L^{2}})
+\mu\|\partial_{t}\partial_{x}u\|^{2}_{L^{2}}+\varepsilon\|\nabla\partial_{t}\eta\|^{2}_{L^{2}}+4\kappa\|\partial_t\tau\|^{2}_{L^{2}}
+\varepsilon\|\nabla\partial_t\tau\|^{2}_{L^{2}}\\
\leq&C\Big(1+
\|\partial_{x}u\|_{L^{2}}^2+ \|\partial_{y}u\|_{L^{2}}^2 +\|\partial_{x}\partial_{y}u\|^2_{L^{2}}+\|\eta\|^{2}_{L^{2}}\|\nabla\eta\|^{2}_{L^{2}}+\|\tau\|^{2}_{L^{2}}\|\nabla\tau\|^{2}_{L^{2}}\\
&+\|\nabla\tau\|^4_{L^{2}}
+\|u\|^{2}_{L^{2}}\|\partial_{x}u\|^{2}_{L^{2}}+\|\nabla\eta\|^4_{L^{2}}\Big)\Big(\|\partial_t u\|^2_{L^{2}}+\|\partial_{t}\tau\|^{2}_{L^{2}}+\|\partial_{t}\eta\|^{2}_{L^{2}}\Big),
\end{align*} 
which, along with Gronwall inequality, implies  
\begin{align}\label{3t}
&\|\partial_t\tau\|^{2}_{L^{2}}+\|\partial_t\eta\|^{2}_{L^{2}}+\|\partial_t u\|^{2}_{L^{2}}+\int^t_0 \Big(\mu\|\partial_{t}\partial_{x}u\|^{2}_{L^{2}} +4\kappa \|\partial_t\tau\|^{2}_{L^{2}} 
+\varepsilon \|\nabla\partial_t(\eta,\tau)\|^{2}_{L^{2}}  \Big)\mathrm{d}s \notag\\
&\quad\leq C\mathrm{exp}\Big(\int^t_0{1+\|\partial_{x}u\|_{L^{2}}^2 + \|\partial_{y}u\|_{L^{2}}^2
+\|\partial_{x}\partial_{y}u\|^2_{L^{2}}+\|\eta\|^{2}_{L^{2}}\|\nabla\eta\|^{2}_{L^{2}}
+\|\tau\|^{2}_{L^{2}}\|\nabla\tau\|^{2}_{L^{2}}} \notag\\
&\quad\quad\quad{+\|\nabla\tau\|^4_{L^{2}}
+\|u\|^{2}_{L^{2}}\|\partial_{x}u\|^{2}_{L^{2}}
+\|\nabla\eta\|^4_{L^{2}}}\mathrm{d}s\Big)
\Big(\|\partial_t\tau_0\|^{2}_{L^{2}}
+\|\partial_t\eta_0\|^{2}_{L^{2}}+\|\partial_t u_0\|^{2}_{L^{2}}\Big) \notag\\
&\quad\leq C\mathrm{exp}(T+J_0 +J_1^2 + TJ_2+J_2+J_0^2+J^2_2 
) (\|\partial_t\tau_0\|^{2}_{L^{2}}
+\|\partial_t\eta_0\|^{2}_{L^{2}}
+\|\partial_t u_0\|^{2}_{L^{2}})=:J_3.
\end{align} 
The proof is complete.
\end{proof}
With the help of Lemma \ref{lem5.3}, we will make the second-order estimate of $\tau$, $u$ and $\eta$ as follows.
\begin{lem}\label{lem5.4}
Under the assumptions of Theorem \ref{thm4},  let $(u,\tau,\eta)$ be the solution of problem \eqref{EQ-t1h}--\eqref{OB_as_boundary_1} on $[0,T]$. Then, for any $t\in[0,T]$, we have
\begin{align*}
&\|\partial^2_y u\|^{2}_{L^{2}}+\|\partial_{x}\partial_{y}u\|^{2}_{L^{2}}+\|\nabla\partial_y\tau\|^{2}_{L^{2}}
+\mu\int^t_0{\|\partial^2_{y}\partial_{x}u\|^{2}_{L^{2}}}\mathrm{d}s
+\frac{1}{\mu}\int^t_0{\|\partial_{t}\partial_{y}u\|^{2}_{L^{2}}}\mathrm{d}s\\
&\quad\quad\quad\quad\quad\quad\quad\quad\quad\quad\quad\quad\quad\quad
+4\kappa\int^t_0{\|\nabla\partial_y\tau\|^{2}_{L^{2}}}\mathrm{d}s
+\varepsilon\int^t_0{\|\nabla^2\partial_y\tau\|^{2}_{L^{2}}}\mathrm{d}s
\leq J_4,\\
&\|\partial^2_x u\|_{L^{2}}^2 + 
\|\nabla^2\eta\|_{L^{2}}^2 +  \|\nabla^2\tau\|_{L^{2}}^2\leq J_5, \\
&\int^t_0 \Big(\|\nabla\partial_{x}^2u\|^{2}_{L^{2}} 
+ \|\nabla^3 \eta\|^{2}_{L^{2}} + \|\nabla^3 \tau\|^{2}_{L^{2}} \Big)\mathrm{d}s \leq J_6,
\end{align*}
where $J_4$, $J_5$ and $J_6$ are respectively defined in \eqref{2utau2}, \eqref{eq_222} and \eqref{eq_uetatau_3}.
\end{lem}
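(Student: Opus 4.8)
The three bounds are best proved in the order $J_4\to J_5\to J_6$, each feeding the next, and only $J_4$ requires a genuine energy estimate; $J_5$ and $J_6$ are then recovered algebraically from the equations. For $J_4$ the plan is to climb one $\partial_y$-derivative above Lemma \ref{lem5.2}, exploiting that the $y$-direction carries no boundary (it is $\mathbb{R}$, or periodic on $\mathcal{T}$). I would differentiate $\eqref{EQ-t1h}_1$ and $\eqref{EQ-t1h}_3$ in $y$, test the $\partial_y$-momentum equation with the combined multiplier $\frac1\mu\partial_t\partial_y u-2\partial_y^3u$, and test the $\partial_y$-stress equation with $-\Delta\partial_y\tau$, then add. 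After integrating by parts in $x$ and $y$, the multiplier $\frac1\mu\partial_t\partial_y u$ produces $\tfrac12\frac{d}{dt}\|\partial_x\partial_y u\|_{L^2}^2+\tfrac1\mu\|\partial_t\partial_y u\|_{L^2}^2$, the multiplier $-2\partial_y^3u$ produces $\frac{d}{dt}\|\partial_y^2u\|_{L^2}^2+2\mu\|\partial_x\partial_y^2u\|_{L^2}^2$, and the stress test gives $\frac{d}{dt}\|\nabla\partial_y\tau\|_{L^2}^2$ with dissipation $\kappa\|\nabla\partial_y\tau\|_{L^2}^2+\varepsilon\|\nabla^2\partial_y\tau\|_{L^2}^2$. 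A key structural point is that the pressure drops out entirely: since $\partial_t\partial_y u$ and $\partial_y^3u$ are divergence-free and $u$ is $x$-periodic, $\int\nabla\partial_y P\cdot\partial_t\partial_y u=\int\nabla\partial_y P\cdot\partial_y^3u=0$.

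Closing $J_4$ is the hard part. The right-hand side is the collection of differentiated nonlinearities $\partial_y(u\cdot\nabla u)$, $\partial_y(\nabla u\,\tau+\tau\nabla^\top u)$ and $\partial_y(\eta\,\mathbb{D}(u))$ paired against the three multipliers. I would expand each by Leibniz's rule and estimate every term through the anisotropic inequalities of Lemma \ref{2.3}, always placing the top-order factor in $L^2$ and distributing the remaining derivatives so that the leading contributions get absorbed, via Young's inequality, into $\mu\|\partial_x\partial_y^2u\|_{L^2}^2$, $\tfrac1\mu\|\partial_t\partial_y u\|_{L^2}^2$ and $\varepsilon\|\nabla^2\partial_y\tau\|_{L^2}^2$; the divergence-free identity $\partial_y u_2=-\partial_x u_1$ is used repeatedly to trade uncontrolled vertical derivatives for horizontal ones. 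A Gronwall argument, with integrating factor bounded by $J_0$--$J_3$ from Lemmas \ref{lem5.1}--\ref{lem5.3}, then delivers $J_4$. The most delicate pieces are the stress-coupling term tested against $\Delta\partial_y\tau$ and $\partial_y(u\cdot\nabla u)$ tested against $\partial_y^3u$: these sit at the very top of the regularity ladder, so the anisotropic interpolation must be split exactly right to leave no deficit after absorption.

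With $J_4$ established, $J_5$ follows without new energy estimates. For $u$ the divergence-free relation gives $\partial_x^2u_1=-\partial_x\partial_y u_2$, already controlled by $J_4$, and $\partial_x^2u_2$ is read off from $\eqref{EQ-t1h}_1$ written as $\mu\partial_x^2u=\partial_t u+u\cdot\nabla u+\nabla P-\diver\tau$: here $\partial_t u$ is bounded by Lemma \ref{lem5.3}, $\diver\tau$ by Lemma \ref{lem5.2}, the convection by Lemma \ref{2.3}, and the pressure by the Poisson equation $\Delta P=-\diver(u\cdot\nabla u)+\diver\diver\tau$, which yields $\|\nabla P\|_{L^2}\le C(\|u\cdot\nabla u\|_{L^2}+\|\nabla\tau\|_{L^2})$ with no dependence on $\partial_x^2u$, so no circularity arises. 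For $\eta$ and $\tau$ the full Laplacian dissipation lets me invert their parabolic equations, $\varepsilon\Delta\eta=\partial_t\eta+u\cdot\nabla\eta$ and the analogue for $\tau$, and combine this with elliptic regularity under the Neumann conditions \eqref{OB_as_boundary_1} to control $\|\nabla^2\eta\|_{L^2}$ and $\|\nabla^2\tau\|_{L^2}$ using Lemma \ref{lem5.3} and $J_4$; this is $J_5$.

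Finally, $J_6$ is obtained by applying $\nabla$ to the same algebraic identities and integrating in time, trading the third-order spatial norms for gradients of time derivatives that are already time-integrable. Explicitly, $\|\nabla\partial_x^2u\|_{L^2}$ is controlled through $\mu\partial_x^2\nabla u=\nabla\partial_t u+\nabla(u\cdot\nabla u)+\nabla^2P-\nabla\diver\tau$, where $\int_0^t\|\nabla\partial_t u\|_{L^2}^2$ splits into $\int_0^t\|\partial_x\partial_t u\|_{L^2}^2$ (Lemma \ref{lem5.3}) and $\int_0^t\|\partial_y\partial_t u\|_{L^2}^2$ (the $\frac1\mu$-term of $J_4$), while $\|\nabla^2P\|_{L^2}\le C\|\Delta P\|_{L^2}$ is handled by the pressure equation and the $J_5$ bounds. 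For $\eta$ and $\tau$, $\varepsilon\nabla\Delta\eta=\nabla\partial_t\eta+\nabla(u\cdot\nabla\eta)$ and its $\tau$-analogue give $\int_0^t\|\nabla^3\eta\|_{L^2}^2$ and $\int_0^t\|\nabla^3\tau\|_{L^2}^2$ in terms of $\int_0^t\|\nabla\partial_t\eta\|_{L^2}^2$, $\int_0^t\|\nabla\partial_t\tau\|_{L^2}^2$ from Lemma \ref{lem5.3} plus nonlinear remainders bounded by Lemma \ref{2.3} and $J_5$, completing the proof.
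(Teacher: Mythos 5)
Your proposal is correct and takes essentially the same route as the paper: the identical three-stage structure ($J_4$ via an energy estimate one $\partial_y$-derivative above Lemma \ref{lem5.2}, then $J_5$ and $J_6$ read off algebraically from the equations together with elliptic estimates and the time-integrability of $\nabla\partial_t(u,\eta,\tau)$ supplied by Lemma \ref{lem5.3} and the $\frac{1}{\mu}$-term of $J_4$), and your multipliers $\frac{1}{\mu}\partial_t\partial_y u-2\partial_y^3 u$ and $-\Delta\partial_y\tau$ on the $\partial_y$-differentiated equations coincide, after integration by parts in $y$ (where there is no boundary), with the paper's choice of testing $\partial_y^2\eqref{EQ-t1h}_1$, $\partial_y\eqref{EQ-t1h}_1$ and \eqref{EQ-t1h}$_3$ against $\partial_y^2u$, $\frac{1}{\mu}\partial_t\partial_y u$ and $\Delta\partial_y^2\tau$. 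The only differences are cosmetic: you spell out the pressure elimination and the pressure Poisson bound $\|\nabla P\|_{L^2}\leq C(\|u\cdot\nabla u\|_{L^2}+\|\nabla\tau\|_{L^2})$, and the identity $\partial_x^2u_1=-\partial_x\partial_yu_2$, where the paper handles the pressure implicitly in \eqref{u_xx} and \eqref{eq_nablaxxu}.
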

\begin{proof}
Multiplying $\partial^2_y\eqref{EQ-t1h}_1$, $\partial_y\eqref{EQ-t1h}_1$ and $\eqref{EQ-t1h}_3$ by $\partial^2_y u$, $\frac{1}{\mu}\partial_t\partial_{y} u$ and $\Delta\partial^2_{y} \tau$ respectively, integrating the resulting equation over $[0,1]\times\mathbb{R}$, we get
\begin{align}\label{2utau20}
&\frac{1}{2}\frac{\mathrm{d}}{\mathrm{d}t}(\|\partial^2_y u\|^{2}_{L^{2}}+\|\partial_{x}\partial_{y}u\|^{2}_{L^{2}}+\|\nabla\partial_y\tau\|^{2}_{L^{2}})
+\mu\|\partial^2_{y}\partial_{x}u\|^{2}_{L^{2}}
+\frac{1}{\mu}\|\partial_{t}\partial_{y}u\|^{2}_{L^{2}}\nonumber\\ 
&\quad+2\kappa\|\nabla\partial_y\tau\|^{2}_{L^{2}}
+\varepsilon\|\nabla^2\partial_y\tau\|^{2}_{L^{2}}\nonumber\\
\quad=&\int{\partial^2_{y}\diver\tau\cdot\partial^2_{y}u}\mathrm{d}x\mathrm{d}y-
\int{\partial^2_{y}(u\cdot\nabla u)\cdot\partial^2_{y}u}\mathrm{d}x\mathrm{d}y
+\frac{1}{\mu}\int{\diver\partial_{y}\tau\cdot\partial_{t}\partial_{y}u}
\mathrm{d}x\mathrm{d}y\nonumber\\
&  -\frac{1}{\mu}\int{\partial_{y}(u\cdot\nabla u)\cdot\partial_{t}\partial_{y}u}\mathrm{d}x\mathrm{d}y+
\int{((\nabla u)\tau+\tau\nabla^\top u):\Delta\partial^2_{y}\tau}\mathrm{d}x\mathrm{d}y\nonumber\\
&  + \int{(\eta(\nabla u+\nabla^\top u)):\Delta\partial^2_{y}\tau}\mathrm{d}x\mathrm{d}y
-\int{(u\cdot\nabla\tau):\Delta\partial^2_{y}\tau}\mathrm{d}x\mathrm{d}y =: \sum_{i=1}^{7}N_{i}.
\end{align}
By virtue of H\"{o}lder  inequality and Young  inequality, $N_1$ is directly estimated by
\begin{equation}\label{eq_N_1} 
N_1\leq C\|\nabla\partial^2_{y}\tau\|_{L^{2}}\|\partial^2_y u\|_{L^{2}}\leq\frac{\varepsilon}{8}\|\nabla^2\partial_{y}\tau\|^2_{L^{2}}+C\|\partial^2_y u\|^2_{L^{2}}. 
\end{equation}
Next, owing to the divergence free of $u$, we can split $N_2$ into two terms:
\begin{equation*}\label{}
\begin{aligned}
N_2=&-\int{\partial^2_{y}(u_i\partial_{i}u_j)\partial^2_{y}u_j}\mathrm{d}x\mathrm{d}y\\
=&-\int{\partial^2_{y}u_i\partial_{i}u_j\partial^2_{y}u_j}\mathrm{d}x\mathrm{d}y
-2\int{\partial_{y}u_i\partial_i\partial_{y}u_j\partial^2_{y}u_j}\mathrm{d}x\mathrm{d}y
=:N_{21}+N_{22}.
\end{aligned}
\end{equation*}
For $N_{21}$, we have
\begin{equation*}\label{}
\begin{aligned}
N_{21}=&-\int{\Big(\partial^2_{y}u_1\partial_{x}u_1\partial^2_{y}u_1
+\partial^2_{y}u_1\partial_{x}u_2\partial^2_{y}u_2
-\partial^2_{y}u_2\partial_{x}u_1\partial^2_{y}u_2\Big)}\mathrm{d}x\mathrm{d}y\\
&-\int{\partial^2_{y}u_2\partial_{y}u_1\partial^2_{y}u_1}\mathrm{d}x\mathrm{d}y
=:N_{211}+N_{212}.
\end{aligned}
\end{equation*}
Thanks to Lemma \ref{2.3},  Poincar\'{e} inequality and Young  inequality, $N_{211}$ and $N_{212}$ can be estimated by
\begin{align*}
N_{211}\leq&C\|\partial^2_{y}u\|_{L^{2}}\|\partial_{x}u\|^{\frac{1}{2}}_{L^{2}}\|\partial_{x}\partial_{y}u\|^{\frac{1}{2}}_{L^{2}}
\|\partial^2_{y}u\|^{\frac{1}{2}}_{L^{2}}(\|\partial^2_{y}u\|_{L^{2}}+\|\partial^2_{y}\partial_{x}u\|_{L^{2}})^{\frac{1}{2}}\\
\leq&\frac{\mu}{8}\|\partial^2_{y}\partial_{x}u\|^2_{L^{2}}+C\|\partial^2_{y}u\|^{2}_{L^{2}}
\|\partial_{x}u\|_{L^{2}}\|\partial_{x}\partial_{y}u\|_{L^{2}},\\
N_{212}\leq&C\|\partial^2_{y}u\|_{L^{2}}\|\partial^2_{y}u_2\|^{\frac{1}{2}}_{L^{2}}\|\partial^3_{y}u_2\|^{\frac{1}{2}}_{L^{2}}
\|\partial_{y}u_1\|^{\frac{1}{2}}_{L^{2}}(\|\partial_{y}u_1\|_{L^{2}}+\|\partial_{x}\partial_{y}u_1\|_{L^{2}})^{\frac{1}{2}}\\
\leq&\frac{\mu}{8}\|\partial^2_{y}\partial_{x}u\|^2_{L^{2}}+C\|\partial^2_{y}u\|^{2}_{L^{2}}\|\partial_{y}u_1\|^{2}_{L^{2}}\|\partial_{x}\partial_{y}u\|^{2}_{L^{2}}.
\end{align*}
Thus, for $N_{21}$, we have
\begin{equation*}\label{}
\begin{aligned}
N_{21}\leq&\frac{\mu}{4}\|\partial^2_{y}\partial_{x}u\|^2_{L^{2}}+C\|\partial^2_{y}u\|^{2}_{L^{2}}(\|\partial_{x}u\|_{L^{2}}\|\partial_{x}\partial_{y}u\|_{L^{2}}
+\|\partial_{y}u\|_{L^{2}}^2\|\partial_{x}\partial_{y}u\|_{L^{2}}^2).
\end{aligned}
\end{equation*}
Similarly, for $N_{22}$, we have  
\begin{align*}
N_{22}=&\,-2\int{\Big(\partial_{y}u_1\partial_{x}\partial_{y}u_1\partial^2_{y}u_1
+\partial_{y}u_1\partial_{x}\partial_{y}u_2\partial^2_{y}u_2
-\partial_{y}u_2\partial_{x}\partial_{y}u_1\partial^2_{y}u_2\Big)}\mathrm{d}x\mathrm{d}y\\
&-2\int{\partial_{y}u_2\partial_{y}^2u_1\partial^2_{y}u_1}\mathrm{d}x\mathrm{d}y\\ 
\leq&\,C\|\partial^2_{y}u\|_{L^{2}}\|\partial_{x}\partial_{y}u\|^{\frac{1}{2}}_{L^{2}}\|\partial_{y}^2\partial_{x}u\|^{\frac{1}{2}}_{L^{2}}
\|\partial_{y}u\|^{\frac{1}{2}}_{L^{2}}(\|\partial_{y}u\|_{L^{2}}+\|\partial_{x}\partial_{y}u\|_{L^{2}})^{\frac{1}{2}}\\
&+ \|\partial^2_{y}u\|_{L^{2}}\|\partial_{x}u_1\|^{\frac{1}{2}}_{L^{2}}\|\partial_{y}\partial_{x}u_1\|^{\frac{1}{2}}_{L^{2}}
\|\partial_{y}^2u\|^{\frac{1}{2}}_{L^{2}}(\|\partial_{y}^2u\|_{L^{2}}+\|\partial_{y}^2\partial_{x}u\|_{L^{2}})^{\frac{1}{2}}\\
\leq&\frac{\mu}{4}\|\partial^2_{y}\partial_{x}u\|^2_{L^{2}}+ C\|\partial_{x}\partial_{y}u\|^{4}_{L^{2}}
\|\partial_{y}u\|^{2}_{L^{2}} +C \|\partial^2_{y}u\|_{L^{2}}^2\|\partial_{x}u_1\|_{L^{2}}\|\partial_{y}\partial_{x}u_1\|_{L^{2}},
\end{align*} 
which, together with the estimate of  $N_{21}$, implies that
\begin{align}\label{eq_N_2} 
N_{2}\leq\,&\frac{\mu}{2}\|\partial^2_{y}\partial_{x}u\|^2_{L^{2}}
+C(\|\partial_{x}u\|_{L^{2}}\|\partial_{x}\partial_{y}u\|_{L^{2}}
+\|\partial_{y}u\|_{L^{2}}^2\|\partial_x\partial_{y}u\|_{L^{2}}^2
) \notag\\ 
&\times (\|\partial^2_{y}u\|^{2}_{L^{2}}+\|\partial_x\partial_{y}u\|_{L^{2}}^2).
\end{align}
By H\"{o}lder inequality and Young inequality, we can estimate $N_3$ as follows:
\begin{equation}\label{eq_N_3} 
N_3\leq C\|\nabla\partial_{y}\tau\|_{L^{2}}\|\partial_t\partial_{y} u\|_{L^{2}}\leq\frac{1}{4\mu}\|\partial_t\partial_{y}u\|^2_{L^{2}}+C\|\nabla^2\tau\|^2_{L^{2}}. 
\end{equation}
To bound $N_4$, we first split $N_4$ into two terms:
\begin{equation*}\label{}
\begin{aligned}
N_4=&-\frac{1}{\mu}\int{\partial_{y}u\cdot\nabla u\cdot\partial_{t}\partial_{y}u}\mathrm{d}x\mathrm{d}y-
\frac{1}{\mu}\int{u\cdot\nabla\partial_{y}u\cdot\partial_{t}\partial_{y}u}\mathrm{d}x\mathrm{d}y=:N_{41}+N_{42}.\\
\end{aligned}
\end{equation*} 
By Lemma \ref{2.3} and Young inequality, $N_{41}$ and $N_{42}$ can be bounded by
\begin{align*}
N_{41}\leq&C\|\partial_{t}\partial_{y}u\|_{L^{2}}\|\nabla u\|^{\frac{1}{2}}_{L^{2}}\|\nabla\partial_{y}u\|^{\frac{1}{2}}_{L^{2}}
\|\partial_{y}u\|^{\frac{1}{2}}_{L^{2}}(\|\partial_{y}u\|_{L^{2}}+\|\partial_{x}\partial_{y}u\|_{L^{2}})^{\frac{1}{2}}\\
\leq&\frac{1}{8\mu}\|\partial_{t}\partial_{y}u\|^2_{L^{2}}+C(\|\partial^2_{y}u\|^2_{L^{2}}+\|\partial_{x}\partial_{y}u\|^2_{L^{2}})\|\nabla u\|_{L^{2}}\|\partial_{x}\partial_{y}u\|_{L^{2}},\\
N_{42}\leq&C\|u\|_{L^{\infty}}\|\nabla\partial_{y}u\|_{L^{2}}\|\partial_{t}\partial_{y}u\|_{L^{2}}\\
\leq&\frac{1}{8\mu}\|\partial_{t}\partial_{y}u\|^2_{L^{2}}+C\|u\|^{\frac{1}{2}}_{L^{2}}\|\partial_{x}u\|^{\frac{1}{2}}_{L^{2}}\|\partial_{y}u\|^{\frac{1}{2}}_{L^{2}}
\|\partial_{x}\partial_{y}u\|^{\frac{1}{2}}_{L^{2}}(\|\partial^2_{y}u\|^2_{L^{2}}+\|\partial_{x}\partial_{y}u\|^2_{L^{2}})\\
\leq&\frac{1}{8\mu}\|\partial_{t}\partial_{y}u\|^2_{L^{2}}+C(\|\partial^2_{y}u\|^2_{L^{2}}+\|\partial_{x}\partial_{y}u\|^2_{L^{2}})\|\nabla u\|_{L^{2}}\|\partial_{x}\partial_{y}u\|_{L^{2}}.
\end{align*}
Summing above two estimates, we have
\begin{equation}\label{eq_N_4} 
N_{4}\leq \frac{1}{4\mu}\|\partial_{t}\partial_{y}u\|^2_{L^{2}}+C(\|\partial^2_{y}u\|^2_{L^{2}}+\|\partial_{x}\partial_{y}u\|^2_{L^{2}})\|\nabla u\|_{L^{2}}\|\partial_{x}\partial_{y}u\|_{L^{2}}. 
\end{equation}
In order to estimate $N_5$, we write $N_5$ more explicitly:
\begin{equation*}\label{}
\begin{aligned}
N_5=&\, \int{(\partial_{\ell}u_i\tau_{\ell j} + \tau_{i\ell }\partial_{\ell}u_j)\Delta\partial^2_{y}\tau_{ij}}\mathrm{d}x\mathrm{d}y\\
=&-\int(\partial_{\ell}u_i\partial_{y}\tau_{\ell j} + \partial_{y}\tau_{i\ell }\partial_{\ell}u_j)\Delta\partial_{y}\tau_{ij}\mathrm{d}x\mathrm{d}y\\
&-\int(\partial_{y}\partial_{\ell}u_i\tau_{\ell j} + \tau_{i\ell }\partial_{y}\partial_{\ell}u_j)\Delta\partial_{y}\tau_{ij}\mathrm{d}x\mathrm{d}y
=:\,N_{51}+N_{52}.
\end{aligned}
\end{equation*}
The estimates for $N_{51}$ and $N_{52}$ are as follows:
\begin{align*}
N_{51}\leq&C\|\nabla^2\partial_{y}\tau\|_{L^{2}}\|\nabla u\|^{\frac{1}{2}}_{L^{2}}\|\nabla\partial_{y}u\|^{\frac{1}{2}}_{L^{2}}
\|\partial_{y}\tau\|^{\frac{1}{2}}_{L^{2}}(\|\partial_{y}\tau\|_{L^{2}}+\|\partial_x\partial_{y}\tau\|_{L^{2}})^{\frac{1}{2}}\\
\leq&\frac{\varepsilon}{16}\|\nabla^2\partial_{y}\tau\|^2_{L^{2}}+\|\nabla u\|_{L^{2}}(\|\partial^2_{y}u\|_{L^{2}}+\|\partial_x\partial_{y}u\|_{L^{2}})
\|\partial_{y}\tau\|_{L^{2}}(\|\partial_{y}\tau\|_{L^{2}}+\|\partial_x\partial_{y}\tau\|_{L^{2}})\\
\leq&\frac{\varepsilon}{16}\|\nabla^2\partial_{y}\tau\|^2_{L^{2}}+C(\|\partial^2_{y}u\|^2_{L^{2}}+\|\partial_x\partial_{y}u\|^2_{L^{2}})
\|\nabla u\|^2_{L^{2}}(\|\nabla \tau\|^2_{L^{2}}+\|\nabla^2 \tau\|^2_{L^{2}}),\\
N_{52}\leq&C\|\tau\|_{L^{\infty}}\|\nabla\partial_{y}u\|_{L^{2}}\|\nabla^2\partial_{y}\tau\|_{L^{2}}\\
\leq&\frac{\varepsilon}{16}\|\nabla^2\partial_{y}\tau\|^2_{L^{2}}+C\|\tau\|^{\frac{1}{2}}_{L^{2}}(\|\tau\|_{L^{2}}+\|\partial_{x}\tau\|_{L^{2}})^{\frac{1}{2}}\|\partial_{y}\tau\|^{\frac{1}{2}}_{L^{2}}
(\|\partial_{y}\tau\|_{L^{2}}+\|\partial_{x}\partial_{y}\tau\|_{L^{2}})^{\frac{1}{2}}\|\nabla\partial_{y}u\|^2_{L^{2}}\\
\leq&\frac{\varepsilon}{16}\|\nabla^2\partial_{y}\tau\|^2_{L^{2}}+C(\|\partial^2_{y}u\|^2_{L^{2}}+\|\partial_{x}\partial_{y}u\|^2_{L^{2}})
(\|\tau\|_{L^{2}}\|\nabla \tau\|_{L^{2}}+\|\tau\|_{L^{2}}\|\nabla \tau\|^\frac{1}{2}_{L^{2}}\|\nabla^2\tau\|^\frac{1}{2}_{L^{2}}\\
&+\|\tau\|^\frac{1}{2}_{L^{2}}\|\nabla\tau\|^\frac{3}{2}_{L^{2}}
+\|\tau\|^\frac{1}{2}_{L^{2}}\|\nabla\tau\|_{L^{2}}\|\nabla^2\tau\|^\frac{1}{2}_{L^{2}}).
\end{align*}
Summing the above two inequalities up, we get  
\begin{align}
N_{5}\leq&\frac{\varepsilon}{8}\|\nabla^2\partial_{y}\tau\|^2_{L^{2}}+\|\nabla\tau\|^2_{L^{2}}+C(\|\partial^2_{y}u\|^2_{L^{2}}
+\|\partial_{x}\partial_{y}u\|^2_{L^{2}})(\|\nabla u\|^2_{L^{2}}\|\nabla \tau\|^2_{L^{2}} \notag \\
&+\|\nabla u\|^2_{L^{2}}\|\nabla^2\tau\|^2_{L^{2}}+\|\tau\|_{L^{2}}^2 + \|\nabla \tau\|_{L^{2}}^2 + \|\nabla^2  \tau\|_{L^{2}}^2).
\end{align} 
Similarly, for $N_{6}$, we have
\begin{align}
N_{6}\leq&\frac{\varepsilon}{8}\|\nabla^2\partial_{y}\tau\|^2_{L^{2}}+\|\nabla\eta\|^2_{L^{2}}+C(\|\partial^2_{y}u\|^2_{L^{2}}
+\|\partial_{x}\partial_{y}u\|^2_{L^{2}})(\|\nabla u\|^2_{L^{2}}\|\nabla \eta\|^2_{L^{2}} \notag\\
&+\|\nabla u\|^2_{L^{2}}\|\nabla^2 \eta\|^2_{L^{2}}+\|\eta\|_{L^{2}}^2 + \|\nabla \eta\|_{L^{2}}^2 + \|\nabla^2  \eta\|_{L^{2}}^2).
\end{align} 
Notice that $N_7$ can be written as
\begin{equation*}\label{}
\begin{aligned}
N_7=&-\int{u_i\partial_{i}\tau_{nj}\Delta\partial^2_{y}\tau_{nj}}\mathrm{d}x\mathrm{d}y\\
=&\int{u_n\partial_i\partial_{y}\tau_{nj}\Delta\partial_{y}\tau_{nj}}\mathrm{d}x\mathrm{d}y
+\int{\partial_{y}u_n\partial_{i}\tau_{nj}\Delta\partial_{y}\tau_{nj}}\mathrm{d}x\mathrm{d}y=:N_{71}+N_{72}.
\end{aligned}
\end{equation*}
With the help of Lemma \ref{2.3} and Young inequality, $N_{71}$ and $N_{72}$ are bounded by
\begin{align*}
N_{71}\leq&C\|\nabla^2\partial_{y}\tau\|_{L^{2}}\|\nabla\partial_{y}\tau\|^{\frac{1}{2}}_{L^{2}}\|\nabla\partial^2_{y}\tau\|^{\frac{1}{2}}_{L^{2}}
\|u\|^{\frac{1}{2}}_{L^{2}}(\|u\|_{L^{2}}+\|\partial_{x}u\|_{L^{2}})^{\frac{1}{2}}\\
\leq&\frac{\varepsilon}{16}\|\nabla^2\partial_{y}\tau\|^2_{L^{2}}+C\|\nabla\partial_{y}\tau\|^2_{L^{2}}
\|u\|^2_{L^{2}}\|\partial_{x}u\|^2_{L^{2}},\\
N_{72}\leq&C\|\nabla^2\partial_{y}\tau\|_{L^{2}}\|\partial_{y}u\|^{\frac{1}{2}}_{L^{2}}\|\partial^2_{y}u\|^{\frac{1}{2}}_{L^{2}}
\|\nabla\tau\|^{\frac{1}{2}}_{L^{2}}(\|\nabla\tau\|_{L^{2}}+\|\nabla\partial_{x}\tau\|_{L^{2}})^{\frac{1}{2}}\\
\leq&\frac{\varepsilon}{16}\|\nabla^2\partial_{y}\tau\|^2_{L^{2}} + C(\|\partial_{x}\partial_{y}u\|^2_{L^{2}}+\|\partial^2_{y} u\|^2_{L^{2}})(\|\nabla \tau\|_{L^{2}}\|\nabla^2 \tau\|_{L^{2}}+\|\nabla \tau\|^2_{L^{2}}).
\end{align*}
Substituting $N_{71}$ and $N_{72}$ into $N_{7}$  means that 
\begin{align}\label{eq_N_7}
N_{7}\leq&\frac{\varepsilon}{8}\|\nabla^2\partial_{y}\tau\|^2_{L^{2}}+C(\|\partial_{x}\partial_{y}u\|^2_{L^{2}}+\|\partial^2_{y} u\|^2_{L^{2}}+\|\nabla\partial_{y}\tau\|^2_{L^{2}})(\|u\|^2_{L^{2}}\|\partial_{x}u\|^2_{L^{2}} \notag\\
&+\|\nabla \tau\|_{L^{2}}\|\nabla^2 \tau\|_{L^{2}}+\|\nabla \tau\|^2_{L^{2}}).
\end{align}
Inserting \eqref{eq_N_1}--\eqref{eq_N_7} into \eqref{2utau20}, we have
\begin{align*}
&\frac{\mathrm{d}}{\mathrm{d}t}(\|\partial^2_y u\|^{2}_{L^{2}}+\|\partial_{x}\partial_{y}u\|^{2}_{L^{2}}+\|\nabla\partial_y\tau\|^{2}_{L^{2}})
+\mu\|\partial^2_{y}\partial_{x}u\|^{2}_{L^{2}}
+\frac{1}{\mu}\|\partial_{t}\partial_{y}u\|^{2}_{L^{2}}\\
&+4\kappa\|\nabla\partial_y\tau\|^{2}_{L^{2}}
+\varepsilon\|\nabla^2\partial_y\tau\|^{2}_{L^{2}}\\
\leq& C
\Big(1+\|\partial_{x}\partial_{y}u\|^{2}_{L^{2}}+\|\nabla u\|_{L^{2}}^2\|\partial_{x}\partial_{y}u\|_{L^{2}}^2+\|\nabla u\|^2_{L^{2}}\|\nabla \tau\|^2_{L^{2}}+\|\nabla u\|^2_{L^{2}}\|\nabla^2 \tau\|^2_{L^{2}}\\
&+\|\tau\|_{L^{2}}^2+\|\nabla \tau\|_{L^{2}}^2+\|\nabla^2 \tau\|_{L^{2}}^2
+\|\eta\|_{L^{2}}^2+\|\nabla \eta\|_{L^{2}}^2+\|\nabla^2 \eta\|_{L^{2}}^2+\|\nabla u\|^2_{L^{2}}\|\nabla \eta\|^2_{L^{2}}\\
&
 +\|\nabla u\|^2_{L^{2}}\|\nabla^2 \eta\|^2_{L^{2}}
+\|u\|^2_{L^{2}}\|\partial_{x}u\|^2_{L^{2}}\Big)\Big(\|\partial^2_y u\|^{2}_{L^{2}}+\|\partial_{x}\partial_{y}u\|^{2}_{L^{2}}+\|\nabla\partial_y\tau\|^{2}_{L^{2}}\Big)+C\|\nabla^2  \tau\|_{L^{2}}^2,
\end{align*}
which, together with Gronwall inequality, holds
\begin{align}\label{2utau2}
&\|\partial^2_y u\|^{2}_{L^{2}}+\|\partial_{x}\partial_{y}u\|^{2}_{L^{2}}+\|\nabla\partial_y\tau\|^{2}_{L^{2}}
+\mu\int^t_0{\|\partial^2_{y}\partial_{x}u\|^{2}_{L^{2}}}\mathrm{d}s+\frac{1}{\mu}\int^t_0{\|\partial_{t}\partial_{y}u\|^{2}_{L^{2}}}\mathrm{d}s\nonumber\\
& 
+4\kappa\int^t_0{\|\nabla\partial_y\tau\|^{2}_{L^{2}}}\mathrm{d}s
+\varepsilon\int^t_0{\|\nabla^2\partial_y\tau\|^{2}_{L^{2}}}\mathrm{d}s\nonumber\\
\leq &\,C\mathrm{exp}\Big(T+J_2+J_2^2  +J_2J_1          + J_1  +J_2 +J_0 + J_0T + J_0J_2 + J_0^2\Big)\nonumber\\
& \times(\|\partial^2_y u_0\|^{2}_{L^{2}}+\|\partial_{x}\partial_{y}u_0\|^{2}_{L^{2}}
+\|\nabla\partial_y\tau_0\|^{2}_{L^{2}}+J_2)=:J_{4}.
\end{align}
Next, we will estimate $\|\partial^2_x u\|_{L^{2}}$, $\|\nabla^2\eta\|_{L^{2}}$ and $\|\nabla^2\tau\|_{L^{2}}$ in sequence. 
Using $\eqref{EQ-t1h}_1$, we have  
\begin{align}\label{u_xx}
\|\partial^2_x u\|_{L^{2}}\leq & C(\|\nabla\tau\|_{L^{2}}+\|u\cdot\nabla u\|_{L^{2}}+\|\partial_t u\|_{L^{2}} +\|u\|_{L^{2}}) \notag \\ 
\leq&C\|\nabla u\|_{L^{2}}\|u\|^{\frac{1}{4}}_{L^{2}}(\|u\|_{L^{2}}+\|\partial_{x}u\|_{L^{2}})^{\frac{1}{4}}
\|\partial_{y}u\|^{\frac{1}{4}}_{L^{2}}(\|\partial_{y}u\|_{L^{2}} \notag \\
&+\|\partial_{x}\partial_{y}u\|_{L^{2}})^{\frac{1}{4}}+C\|\partial_t u\|_{L^{2}}+C\|\nabla\tau\|_{L^{2}}  +C\| u\|_{L^{2}} \notag \\
\leq&C[J^\frac{1}{2}_2J^\frac{1}{8}_0(J^\frac{1}{2}_{0}+J^\frac{1}{2}_2)^{\frac{1}{4}}  J_2^{\frac{1}{8}}(J^\frac{1}{2}_2+J^\frac{1}{2}_4)^{\frac{1}{4}}+J^\frac{1}{2}_0+J^\frac{1}{2}_2+J^\frac{1}{2}_3]=: J_{51}.
\end{align} 
With the help of $\eqref{EQ-t1h}_2$, Lemma \ref{2.3}, Lemma \ref{lem5.1}--\ref{lem5.3}, \eqref{2utau2}, 
 and the elliptic estimates, we have
\begin{align}\label{2eta2}
\|\nabla^2\eta\|_{L^{2}}\leq&C(\|u\cdot\nabla\eta\|_{L^{2}}+\|\partial_t\eta\|_{L^{2}} + \|\eta\|_{L^{2}})\notag\\
\leq&C\|\nabla\eta\|_{L^{2}}\|u\|^{\frac{1}{4}}_{L^{2}}(\|u\|_{L^{2}}+\|\partial_{x}u\|_{L^{2}})^{\frac{1}{4}}
\|\partial_{y}u\|^{\frac{1}{4}}_{L^{2}}(\|\partial_{y}u\|_{L^{2}} \notag \\
&+\|\partial_{x}\partial_{y}u\|_{L^{2}})^{\frac{1}{4}}+C\|\partial_t\eta\|_{L^{2}} + C\|\eta\|_{L^{2}}\notag\\
\leq& C[J^\frac{1}{2}_2J^\frac{1}{8}_0(J^\frac{1}{2}_{0}+J^\frac{1}{2}_2)^{\frac{1}{4}}  J_2^{\frac{1}{8}}(J^\frac{1}{2}_2+J^\frac{1}{2}_4)^{\frac{1}{4}}+J^\frac{1}{2}_0+J^\frac{1}{2}_3]=:J_{52}.
\end{align}
Similarly to the analysis of $\|\nabla^2\eta\|_{L^{2}}$, we have
\begin{align} \label{2tau2}
&\|\nabla^2\tau\|_{L^{2}}\notag\\ \leq&C\Big(\|u\cdot\nabla\tau\|_{L^{2}}+\|\partial_t\tau\|_{L^{2}}+\|\tau\|_{L^{2}}
+\|\tau\nabla u\|_{L^{2}}+\|\eta\nabla u\|_{L^{2}} \Big) \notag \\
\leq&C\Big(\|\partial_t\tau\|_{L^{2}}+\|\tau\|_{L^{2}}+C\|\nabla\tau\|_{L^{2}}\|u\|_{L^{\infty}}+C\|\nabla u\|_{L^{2}}(\|\eta\|_{L^{\infty}}+\|\tau\|_{L^{\infty}})\Big) \notag  \\
\leq&C\|\partial_t\tau\|_{L^{2}}+ C\|\tau\|_{L^{2}} \notag\\
&+ C\|\nabla\tau\|_{L^{2}}\|u\|^{\frac{1}{4}}_{L^{2}}(\|u\|_{L^{2}}+\|\partial_{x}u\|_{L^{2}})^{\frac{1}{4}}
\|\partial_{y}u\|^{\frac{1}{4}}_{L^{2}}(\|\partial_{y}u\|_{L^{2}} 
+\|\partial_{x}\partial_{y}u\|_{L^{2}})^{\frac{1}{4}}
 \notag  \\
&+C\|\nabla u\|_{L^{2}}\|\tau\|^{\frac{1}{4}}_{L^{2}}(\|\tau\|_{L^{2}}+\|\partial_{x}\tau\|_{L^{2}})^{\frac{1}{4}}
\|\partial_{y}\tau\|^{\frac{1}{4}}_{L^{2}}(\|\partial_{y}\tau\|_{L^{2}}+\|\partial_{x}\partial_{y}\tau\|_{L^{2}})^{\frac{1}{4}} \notag 
 \\
&+C\|\nabla u\|_{L^{2}}\|\eta\|^{\frac{1}{4}}_{L^{2}}(\|\eta\|_{L^{2}}+\|\partial_{x}\eta\|_{L^{2}})^{\frac{1}{4}}
\|\partial_{y}\eta\|^{\frac{1}{4}}_{L^{2}}(\|\partial_{y}\eta\|_{L^{2}}+\|\partial_{x}\partial_{y}\eta\|_{L^{2}})^{\frac{1}{4}} \notag \\
\leq&C\Big[J^\frac{1}{2}_3 + J^\frac{1}{2}_1 +J^\frac{5}{8}_2 J^\frac{1}{8}_{0}(J^\frac{1}{8}_{0}+ J_2^{\frac{1}{8}})(J_2^{\frac{1}{8}} + J_4^{\frac{1}{8}}+J^{\frac{1}{4}}_{52})+J^\frac{5}{8}_2 J^\frac{1}{8}_{1}(J^\frac{1}{8}_{1}+ J_2^{\frac{1}{8}})(J_2^{\frac{1}{8}} + J_4^{\frac{1}{8}})\Big] =:J_{53}.
\end{align}
Summing \eqref{u_xx}, \eqref{2eta2} and \eqref{2tau2} up, we find that   
\begin{equation}\label{eq_222}
    \|\partial^2_x u\|_{L^{2}}^2 + 
\|\nabla^2\eta\|_{L^{2}}^2 
 + \|\nabla^2\tau\|_{L^{2}}^2\leq J_{51}^2 + J_{52}^2 + J_{53}^2 =:J_5.
\end{equation} 
Next, using \eqref{EQ-t1h}$_1$, Lemma \ref{lem5.1}--\ref{lem5.3}, \eqref{2utau2} and \eqref{eq_222}, we have 
\begin{align}\label{eq_nablaxxu}
\int_0^t\|\nabla\partial_x^2 u\|_{L^2}^2 \mathrm{d}s
\leq&\, C\int_0^t (\|\nabla(u\cdot\nabla u)\|_{L^2}^2 + \|\nabla\partial_t u \|_{L^2}^2 + \|\nabla \mathrm{div} u\|_{L^2}^2 + \|u\|_{H^1}^2)\mathrm{d}s \notag \\ 
\leq&\, C\int_0^t\Big( (\|\nabla u\|_{L^2}^2+\|\partial_xu\|_{L^2}\|\partial_x\partial_yu\|_{L^2})\|\nabla u\|_{H^1}^2 + \|\nabla\partial_t u \|_{L^2}^2 \notag\\
&+ \|\nabla^2\tau\|_{L^2}^2 + \|u\|_{H^1}^2\Big) \mathrm{d}s \notag \\ 
\leq& C(J_0 + J_2 + J_3 + J_4 + J_5)(1+T+J_2+J_4).
\end{align}
Moreover, using \eqref{EQ-t1h}$_2$, \eqref{EQ-t1h}$_3$,  Lemma \ref{lem5.1}--\ref{lem5.3}, \eqref{2utau2}, \eqref{eq_222} and elliptic estimates, we have, for $\nabla^3(\eta,\tau)$, that
\begin{align}\label{eq_nabla3eta}
    &\int_0^t\|\nabla^3 \eta\|_{L^2}^2 \mathrm{d}s \notag\\ 
\leq&\, C\int_0^t (\|\nabla(u\cdot\nabla \eta)\|_{L^2}^2 + \|\nabla\partial_t \eta \|_{L^2}^2   + \|\eta\|_{H^1}^2)\mathrm{d}s \notag \\ 
\leq&\, C\int_0^t\Big(( \|\nabla (u,\eta)\|_{L^2}^2 + \|\partial_xu\|_{L^2}\|\partial_x\partial_yu\|_{L^2})\|\nabla (u,\eta)\|_{H^1}^2 + \|\nabla\partial_t \eta \|_{L^2}^2  + \|\eta\|_{H^1}^2\Big) \mathrm{d}s \notag \\ 
\leq& C(J_0 + J_2 + J_3 + J_4 + J_5)(1+T+J_2+J_4),
\end{align}
and 
\begin{align}\label{eq_nabla3tau}
    &\int_0^t\|\nabla^3 \tau\|_{L^2}^2 \mathrm{d}s \notag\\ 
\leq&\, C\int_0^t (\|\nabla(u\cdot\nabla \tau)\|_{L^2}^2 + \|\nabla\partial_t \tau \|_{L^2}^2 + \||\nabla^2 u ||(\tau,\eta)|\|_{L^2}^2  +  \||\nabla u ||\nabla(\tau,\eta)|\|_{L^2}^2 + \|\tau\|_{H^1}^2)\mathrm{d}s \notag \\ 
\leq&\, C\int_0^t \Big( \|\nabla (u,\eta,\tau)\|_{L^2}^2 + \|\partial_xu\|_{L^2}\|\partial_x\partial_yu\|_{L^2} + \|\nabla(\eta,\tau)\|_{H^1}^2)\|\nabla (u,\eta,\tau)\|_{H^1}^2 \notag \\ 
&+ \|\nabla\partial_t \tau \|_{L^2}^2  + \|\tau\|_{H^1}^2 \Big) \mathrm{d}s\notag \\ 
\leq& C(J_0 +J_1 + J_2 + J_3 + J_4 + J_5)(1+T+J_2+J_4+J_5),
\end{align}
Summing \eqref{eq_nablaxxu}, \eqref{eq_nabla3eta} and \eqref{eq_nabla3tau} up, we get 
\begin{align}\label{eq_uetatau_3}
    &\int^t_0 \Big(\|\nabla\partial_{x}^2u\|^{2}_{L^{2}} 
+ \|\nabla^3 \eta\|^{2}_{L^{2}} + \|\nabla^3 \tau\|^{2}_{L^{2}} \Big)\mathrm{d}s \notag\\ 
\leq\,& C(J_0 +J_1 + J_2 + J_3 + J_4 + J_5)(1+T+J_2+J_4+J_5) =: J_6.
\end{align}
The proof is complete.
\end{proof}
\subsection{Proof of Theorem \ref{thm4}}
Under the assumptions of Theorem \ref{thm4},  let $(u,\tau,\eta)$ be the solution of problem \eqref{EQ-t1h}--\eqref{OB_as_boundary_1} on $[0,T]$. 
With the help of Lemma \ref{lem5.1}--\ref{lem5.4}, one can easily find that there exist a constant $C_T$ such that 
\begin{gather}\label{eq_regu}
    \|(\eta,\tau)\|_{L^1} + \|(u,\eta,\tau)\|_{H^2}^2 + \|(\partial_tu,\partial_t\eta,\partial_t\tau)\|_{L^2}^2 + \int_0^t \|(\partial_x\partial_t u,\nabla\partial_t\eta,\nabla\partial_t \tau)\|_{L^2}^2\mathrm{d}s \notag \\ 
    + \int_0^t \|(\partial_x u,\nabla\eta,\nabla \tau)\|_{H^2}^2\mathrm{d}s \leq C_T.
\end{gather}
Then, combining \eqref{eq_regu} and Proposition \ref{prop_local}, the proof of Theorem \ref{thm4} can be completed by the standard  continuity  method. One can refer to \cite{Bedrossian_Vicol_2022} and \cite{Constantin-Kliegl2012}  for more detailed discuss.
\section{Proof of Theorem \ref{thm5} and Theorem \ref{thm6}.}\label{part3}
In this section, we will prove Theorem \ref{thm5} and Theorem \ref{thm6} for the spatial domain $[0,1]\times \mathcal{T}$. To prove Theorem \ref{thm5}, we use the  smallness condition of initial data to establish some uniform estimates of solutions. And combining with continuity method, we can complete the proof. In order to prove Theorem \ref{thm6}, we mainly use the fact that $u$ satisfying Poincar\'{e} inequality.
%\cite{Nirenberg1959}.
%\begin{lem}[\cite{Nirenberg1959}]\label{2.2}
%Let $f$, $\nabla f$ and $\nabla^2f$ are all in $L^{2}(\Omega)$. Then,
%begin{equation*}
%\begin{aligned}
%\|f\|_{L^{4}(\Omega)}\leq
%C\|f\|_{L^{2}(\Omega)}^{\frac12}\|\nabla f\|_{L^{2}(\Omega)}^{\frac12}+\|f\|_{L^{2}(\Omega)}.
%\end{aligned}
%\end{equation*}
%\end{lem}
\subsection{The uniform estimates of \texorpdfstring{$(u,\tau,\bar{\eta})$}{u, tau, bar eta}}
\begin{lem}\label{lem6.1}
Under the assumptions of Theorem \ref{thm5}, for given $T>0$, there exist a sufficiently small positive constant $\delta_1$ independent of $T$, such that if
\begin{equation}\label{2small}
\begin{aligned}
&\sup_{0\leq s\leq T}\|(u,\tau,\bar{\eta})(s)\|_{H^{1}}\leq\delta_1,
\end{aligned}
\end{equation}
then, it holds that
\begin{equation}\label{2small2}
\begin{aligned}
&\sup_{0\leq s\leq T}\|(u,\tau,\bar{\eta})(s)\|_{H^{1}}\leq\frac{1}{2}\delta_1.
\end{aligned}
\end{equation}
\end{lem}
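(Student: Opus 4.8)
The plan is to run a continuity (bootstrap) argument: under the a priori hypothesis \eqref{2small} I would derive a closed energy inequality in which every nonlinear contribution carries a factor of the small quantity $\sup_{[0,T]}\|(u,\tau,\bar\eta)\|_{H^1}$, and then absorb those contributions to force the energy below $\tfrac14\delta_1^2$, which yields \eqref{2small2}.

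First I would build a combined $H^1$ energy with the weights chosen so that the linear coupling cancels. Testing $\eqref{EQ-t4h}_1$ by $u$, $\eqref{EQ-t4h}_3$ by $\tfrac12\tau$, and $\eqref{EQ-t4h}_2$ by $\bar\eta$, the coupling $\int\diver\tau\cdot u=-\int\tau:\mathbb D(u)$ is cancelled exactly by one half of the linear source $2\mathbb D(u)$ in the $\tau$–equation, while the transport terms $\int u\cdot\nabla u\cdot u$ and $\int u\cdot\nabla\bar\eta\,\bar\eta$ vanish by $\diver u=0$ and \eqref{boundary}. This gives the $L^2$–level dissipation $\mu\|\partial_x u\|_{L^2}^2+\kappa\|\tau\|_{L^2}^2+\tfrac{\varepsilon}{2}\|\nabla\tau\|_{L^2}^2+\varepsilon\|\nabla\bar\eta\|_{L^2}^2$. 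I would then repeat the same weighted pairing after testing $\eqref{EQ-t4h}_1$ by $\partial_t u$ and by $-\partial_y^2u$, and testing $\eqref{EQ-t4h}_2$, $\eqref{EQ-t4h}_3$ by $-\Delta\bar\eta$ and $-\Delta\tau$; the coupling cancels at each level, producing the further dissipation $\|\partial_t u\|_{L^2}^2+\mu\|\partial_x\partial_y u\|_{L^2}^2+\kappa\|\nabla\tau\|_{L^2}^2+\varepsilon\|\nabla^2\tau\|_{L^2}^2+\varepsilon\|\nabla^2\bar\eta\|_{L^2}^2$. Summing with small weights gives $\frac{d}{dt}\mathcal E+\mathcal D\le(\text{nonlinear terms})$, where $\mathcal E\simeq\|(u,\tau,\bar\eta)\|_{H^1}^2$ and $\mathcal D$ collects all the dissipation above.

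The decisive structural point, and the way I would compensate for the absence of vertical viscosity in the $u$–equation, is the Poincar\'e inequality supplied by the Dirichlet condition $u(0,y,t)=u(1,y,t)=0$ in \eqref{boundary}. Since $u$, and hence $\partial_y u$, vanish on $\{x=0,1\}$, one has $\|u\|_{L^2}\le C\|\partial_x u\|_{L^2}$ and $\|\partial_y u\|_{L^2}\le C\|\partial_x\partial_y u\|_{L^2}$; combined with $\int_{[0,1]\times\mathcal T}\bar\eta\,\mathrm{d}x\mathrm{d}y=0$ (conserved in time), which gives $\|\bar\eta\|_{L^2}\le C\|\nabla\bar\eta\|_{L^2}$, and with the relaxation term controlling $\|\tau\|_{L^2}$ directly, the dissipation becomes coercive over the whole energy, $\mathcal E\le C\mathcal D$. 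In particular the component $\|\partial_y u\|_{L^2}^2$, which receives no dissipation of its own, is dominated by the available $\|\partial_x\partial_y u\|_{L^2}^2$.

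It remains to handle the nonlinear terms, which is where the real work lies. Each term arising from $u\cdot\nabla u$, $u\cdot\nabla\tau$, $u\cdot\nabla\bar\eta$, the stress interaction $\nabla u\,\tau+\tau\nabla^\top u$, and $\bar\eta\,\mathbb D(u)$ is at least cubic in the perturbation, so I expect to bound each by $C\|(u,\tau,\bar\eta)\|_{H^1}\,\mathcal D$ using the anisotropic inequalities of Lemma \ref{2.3} and Remark \ref{remark1} together with Young's inequality, arranging the derivatives to fall on the factors controlled by $\mathcal D$ (such as $\|\nabla^2\tau\|_{L^2}$ or $\|\partial_x\partial_y u\|_{L^2}$) while the remaining factor is measured in a norm bounded by $\delta_1$. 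Invoking \eqref{2small} then gives $\frac{d}{dt}\mathcal E+\mathcal D\le C\delta_1\mathcal D$, and choosing $\delta_1$ so small that $C\delta_1\le\tfrac12$ yields $\frac{d}{dt}\mathcal E+\tfrac12\mathcal D\le0$, whence $\mathcal E(t)\le\mathcal E(0)\le C\|(u_0,\tau_0,\bar\eta_0)\|_{H^1}^2$ for all $t\in[0,T]$; by the smallness of the initial data in Theorem \ref{thm5} this is $\le\tfrac14\delta_1^2$, giving \eqref{2small2}. The main obstacle is precisely this last step: verifying that no nonlinear term forces a derivative onto the non-dissipated direction $\partial_y u$ in a way that cannot be recovered from $\|\partial_x\partial_y u\|_{L^2}$ through Poincar\'e, which is exactly what the anisotropic estimates and the Dirichlet geometry are designed to resolve.
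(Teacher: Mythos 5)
Your proposal is correct and follows essentially the same route as the paper's proof: the same weighted multipliers ($2u,\bar\eta,\tau$ at the $L^2$ level; $\partial_t u$, $-\partial_y^2 u$, $-\Delta\bar\eta$, $-\Delta\tau$ at the $H^1$ level), the same exact cancellation of the linear stress--velocity coupling at the $L^2$ level, anisotropic estimates (Lemma \ref{2.3}/Remark \ref{remark1}) on the cubic terms, absorption under the smallness hypothesis \eqref{2small}, and integration in time together with the Poincar\'e inequalities coming from the Dirichlet condition and the zero mean of $\bar\eta$. The only imprecision is your claim that the coupling cancels ``at each level'': at the $H^1$ level it cancels only partially, leaving non-small linear terms of the form $C\|\partial_x u\|_{L^2}^2$ and $C\|\nabla\tau\|_{L^2}^2$, but these are absorbed by the $L^2$-level dissipation exactly because the $H^1$ estimate enters the combined energy with a small weight, which your plan already builds in.
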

\begin{proof}
To begin with, multiplying $\eqref{EQ-t4h}_{1}$, $\eqref{EQ-t4h}_{2}$ and $\eqref{EQ-t4h}_{3}$ by  $2u$,  $\bar{\eta}$ and 
 $\tau$, respectively, then integrating the resulting equation over $[0,1]\times\mathcal{T}$, we have 
\begin{align}\label{0utau}
&\frac{1}{2}\frac{\mathrm{d}}{\mathrm{d}t}(\|\tau\|^{2}_{L^{2}}+2\|u\|^{2}_{L^{2}}+\|\bar{\eta}\|^{2}_{L^{2}})+2\kappa\|\tau\|^{2}_{L^{2}}
+\varepsilon\|\nabla\tau\|^{2}_{L^{2}}+2\mu\|\partial_x u\|^{2}_{L^{2}}+\varepsilon\|\nabla\bar{\eta}\|^{2}_{L^{2}} \notag\\
&\quad=\int{(\nabla u \tau+\tau\nabla^\top u):\tau}\mathrm{d}x\mathrm{d}y+\int{\bar{\eta}(\nabla u+\nabla^\top u):\tau}\mathrm{d}x\mathrm{d}y=:B_1+B_2, 
\end{align} 
where we use the fact that
\begin{equation*}
\begin{aligned}
\int{(\nabla u+\nabla^\top u):\tau}\mathrm{d}x\mathrm{d}y=-2\int{\diver\tau\cdot u}\mathrm{d}x\mathrm{d}y. 
\end{aligned}
\end{equation*}
With the help of Lemma \ref{2.3} and Young inequality, $B_1$ is bounded by
\begin{equation*}
\begin{aligned}
B_1\leq&C\|\nabla u\|_{L^{2}}\|\tau\|^{2}_{L^{4}}\\
\leq&C\|\nabla u\|_{L^{2}}\|\tau\|_{L^{2}}(\|\tau\|_{L^{2}}+\|\nabla\tau\|_{L^{2}})\\
\leq&\frac{\varepsilon}{4}\|\nabla\tau\|^2_{L^{2}}+C\|\tau\|^2_{L^{2}}(\|\nabla u\|_{L^{2}}+\|\nabla u\|^2_{L^{2}}).\\
\end{aligned}
\end{equation*}
Similarly, $B_{2}$ can be bounded by
\begin{equation*}
\begin{aligned}
B_2 =& -\int\Big(\partial_j(\bar{\eta}\tau_{ij})  u_i + \partial_i(\bar{\eta}\tau_{ij})  u_i\Big)\mathrm{d}x\mathrm{d}y\\
\leq&C\|\nabla\tau\|_{L^{2}}\|\bar{\eta}\|^{\frac{1}{2}}_{L^{2}}(\|\eta\|_{L^{2}}+\|\partial_{y}\bar{\eta}\|_{L^{2}})^{\frac{1}{2}}
\|u\|^{\frac{1}{2}}_{L^{2}}\|\partial_{x}u\|^{\frac{1}{2}}_{L^{2}}\\
&+C\|\nabla\bar{\eta}\|_{L^{2}}\|\tau\|^{\frac{1}{2}}_{L^{2}}(\|\tau\|_{L^{2}}+\|\partial_{y}\tau\|_{L^{2}})^{\frac{1}{2}}
\|u\|^{\frac{1}{2}}_{L^{2}}\|\partial_{x}u\|^{\frac{1}{2}}_{L^{2}}\\
\leq&\frac{\varepsilon}{4}\|\nabla\tau\|^2_{L^{2}}+\frac{\varepsilon}{2}\|\nabla\bar{\eta}\|^2_{L^{2}}+C\|\partial_{x}u\|^2_{L^{2}}
\Big(\|\bar{\eta}\|_{L^{2}}\big(\|\bar{\eta}\|_{L^{2}}\\
&+\|\nabla\bar{\eta}\|_{L^{2}}\big)+\|\tau\|_{L^{2}}\big(\|\tau\|_{L^{2}}+\|\nabla\tau\|_{L^{2}}\big)\Big).
\end{aligned}
\end{equation*}
Together with \eqref{2small}, inserting $B_{1}$ and $B_{2}$ into \eqref{0utau}, we get  
\begin{equation*}
\begin{aligned}
&\frac{\mathrm{d}}{\mathrm{d}t}(\|\tau\|^{2}_{L^{2}}+2\|u\|^{2}_{L^{2}}+\|\bar{\eta}\|^{2}_{L^{2}})+2\kappa\|\tau\|^{2}_{L^{2}}
+\varepsilon\|\nabla\tau\|^{2}_{L^{2}}+4\mu\|\partial_x u\|^{2}_{L^{2}}+\varepsilon\|\nabla\bar{\eta}\|^{2}_{L^{2}}\\
&\quad\leq C\|\partial_{x}u\|^2_{L^{2}}
\Big[\|\bar{\eta}\|_{L^{2}}(\|\bar{\eta}\|_{L^{2}}
+\|\nabla\bar{\eta}\|_{L^{2}})
+\|\tau\|_{L^{2}}(\|\tau\|_{L^{2}}+\|\nabla\tau\|_{L^{2}})\Big]\\
&\quad\quad +C\|\tau\|^2_{L^{2}}(\|\nabla u\|_{L^{2}}+\|\nabla u\|^2_{L^{2}})\\
&\quad\leq C\delta_1(\|\tau\|^2_{L^{2}}+\|\partial_{x}u\|^2_{L^{2}}).
\end{aligned}
\end{equation*}
By the smallness of $\delta_1$, we get
\begin{equation}\label{utau1}
\begin{aligned}
\frac{\mathrm{d}}{\mathrm{d}t}(\|\tau\|^{2}_{L^{2}}+2\|u\|^{2}_{L^{2}}+\|\bar{\eta}\|^{2}_{L^{2}})+\kappa\|\tau\|^{2}_{L^{2}}
+\varepsilon\|\nabla\tau\|^{2}_{L^{2}}+2\mu\|\partial_x u\|^{2}_{L^{2}}+\varepsilon\|\nabla\bar{\eta}\|^{2}_{L^{2}}\leq 0.
\end{aligned}
\end{equation}
Integrating \eqref{utau1} over $(0,t)$, and using Poincar\'{e} inequality, we can deduce that
\begin{align}\label{eq_uetateu1}
2\|u\|^{2}_{L^{2}} + \|\tau\|^{2}_{L^{2}}&+\|\bar{\eta}\|^{2}_{L^{2}}
+\mu\int^t_0(\| u\|^{2}_{L^{2}} + \|\partial_x u\|^{2}_{L^{2}})\mathrm{d}s+\kappa\int^t_0{\|\tau\|^{2}_{L^{2}}}\mathrm{d}s
\notag\\
&+\varepsilon\int^t_0{\|\nabla\tau\|^{2}_{L^{2}}}\mathrm{d}s
+\varepsilon\int^t_0(\|\bar{\eta}\|^{2}_{L^{2}} + \|\nabla\bar{\eta}\|^{2}_{L^{2}})\mathrm{d}s
\leq C\|(u_0,\tau_0,\eta_0)\|^{2}_{L^{2}}.
\end{align}
Next, we deduce the first-order estimation of $(u,\tau,\bar{\eta})$.
Multiplying $\eqref{EQ-t4h}_{1}$, $\eqref{EQ-t4h}_{1}$, $\eqref{EQ-t4h}_{2}$ and $\eqref{EQ-t4h}_{3}$ by $\partial_{t}u$, $-2\partial^2_{y}u$, $-\Delta\bar{\eta}$ and $-\Delta\tau$ respectively, then integrating the resulting equation over $[0,1]\times\mathcal{T}$ by parts, we obtain
\begin{align}\label{uxuy2}
&\frac{1}{2}\frac{\mathrm{d}}{\mathrm{d}t}(\|\nabla\tau\|^{2}_{L^{2}}+\mu\|\partial_x u\|^{2}_{L^{2}}+2\|\partial_y u\|^{2}_{L^{2}}
+\|\nabla\bar{\eta}\|^{2}_{L^{2}})+\|\partial_t u\|^{2}_{L^{2}}\nonumber\\
&\quad\quad\quad\quad\quad\quad+2\mu\|\partial_{x}\partial_{y} u\|^{2}_{L^{2}}
+2\kappa\|\nabla\tau\|^{2}_{L^{2}}+\varepsilon\|\nabla^2\tau\|^{2}_{L^{2}}
+\varepsilon\|\nabla^2\bar{\eta}\|^{2}_{L^{2}}\nonumber\\
&\quad=\int{\diver\tau\cdot \partial_t u}\mathrm{d}x\mathrm{d}y
-\int{u\cdot\nabla u\cdot \partial_t u}\mathrm{d}x\mathrm{d}y
-2\int{\diver\tau\cdot\partial^2_{y}u}\mathrm{d}x\mathrm{d}y\nonumber\\
&\quad\quad-2\int{u\cdot\nabla u\cdot\partial^2_{y}u}\mathrm{d}x\mathrm{d}y
+\int{u\cdot\nabla\tau:\Delta\tau}\mathrm{d}x\mathrm{d}y-\int{(\nabla u+\nabla^\top u):\Delta\tau}\mathrm{d}x\mathrm{d}y\nonumber\\
&\quad\quad-\int{((\nabla u)\tau+\tau\nabla^\top u):\Delta\tau}\mathrm{d}x\mathrm{d}y
-\int{\bar{\eta}(\nabla u+\nabla^\top u):\Delta\tau}\mathrm{d}x\mathrm{d}y\nonumber\\
&\quad\quad-\int{\nabla\bar{\eta}\cdot\nabla u\cdot\nabla\bar{\eta}}\mathrm{d}x\mathrm{d}y  =:\sum_{i=1}^{9}E'_i.
\end{align}
By H\"{o}lder  inequality and Young inequality, the estimate of $E'_1$ is given directly by
\begin{equation*}\label{}
\begin{aligned}
E'_1\leq\frac{1}{10}\|\partial_t u\|^2_{L^{2}}+C\|\nabla\tau\|^2_{L^{2}}.
\end{aligned}
\end{equation*}
Similarly to the analysis of \eqref{eq_F2}, we can get estimate $E'_2=F_{2}$ as follows

\begin{equation*}\label{E''2}
\begin{aligned}
E'_2\leq &\,\frac{2}{5}\|\partial_{t}u\|^2_{L^{2}}+\frac{\mu}{2}\|\partial_{x}\partial_{y}u\|^{2}_{L^{2}}
+C\|\partial_{x}u\|^{4}_{L^{2}}\|u\|^2_{L^{2}}+C\|\partial_{y}u\|^{2}_{L^{2}}\|u\|^2_{L^{2}}\|\partial_{x}u\|^{2}_{L^{2}}\\
\leq&\,\frac{2}{5}\|\partial_{t}u\|^2_{L^{2}} +  \frac{\mu}{2}\|\partial_{x}\partial_{y}u\|^{2}_{L^{2}} + C\delta_1\|\partial_{x} u\|^{2}_{L^{2}}.
\end{aligned}
\end{equation*}
From integration by parts, we have
\begin{equation*}\label{}
\begin{aligned}
E'_3+E'_6=&-2\int{\diver\partial_{x}\tau:\partial_{x} u}\mathrm{d}x\mathrm{d}y
\leq C\|\nabla^2\tau\|_{L^{2}}\|\partial_x u\|_{L^{2}}\\
\leq& \,\frac{\varepsilon}{8}\|\nabla^2\tau\|^2_{L^{2}}+C\|\partial_x u\|^2_{L^{2}}.
\end{aligned}
\end{equation*}
Similarly to the analysis of \eqref{F'4}, for $E''_4 = F'_4$, we have 
\begin{equation*}\label{}
\begin{aligned}
E'_{4}\leq&\frac{\mu}{4}\|\partial_{x}\partial_{y}u\|^{2}_{L^{2}}+C\delta_1\|\partial_{x}u\|^{2}_{L^{2}}.
\end{aligned}
\end{equation*}
Using Lemma \ref{2.3}, $E'_{5}$ is given by
\begin{equation*}
\begin{aligned}
E'_{5}\leq&C\|\nabla^2\tau\|_{L^{2}}\|\nabla\tau\|^{\frac{1}{2}}_{L^{2}}(\|\nabla\tau\|_{L^{2}}
+\|\nabla\partial_{y}\tau\|_{L^{2}})^{\frac{1}{2}}
\|u\|^{\frac{1}{2}}_{L^{2}}(\|u\|_{L^{2}}+\|\partial_{x}u\|_{L^{2}})^{\frac{1}{2}}\\
\leq&\frac{\varepsilon}{4}\|\nabla^2\tau\|^2_{L^{2}}+2\kappa\|\nabla\tau\|^2_{L^{2}}+C\|\nabla\tau\|^{2}_{L^{2}}
\|u\|^2_{L^{2}}\|\partial_{x}u\|^{2}_{L^{2}}\\
\leq & \frac{\varepsilon}{4}\|\nabla^2\tau\|^2_{L^{2}}+2\kappa\|\nabla\tau\|^2_{L^{2}} + C\delta_1 \|\partial_{x}u\|^{2}_{L^{2}}.
\end{aligned}
\end{equation*}
Similarly to the analysis of \eqref{F'6}, we can get
\begin{equation*}
\begin{aligned}
E'_{7}=F_6\leq&\frac{\varepsilon}{8}\|\nabla^2\tau\|^2_{L^{2}}+\frac{\mu}{8}\|\partial_{x}\partial_{y}u\|^2_{L^{2}}
+C(\|\partial_{x}u\|^2_{L^{2}}+\|\partial_{y}u\|^2_{L^{2}})(\|\tau\|^4_{L^{2}}+\|\tau\|^2_{L^{2}}\|\nabla\tau\|^2_{L^{2}})\\
\leq&\,\frac{\varepsilon}{8}\|\nabla^2\tau\|^2_{L^{2}}+\frac{\mu}{8}\|\partial_{x}\partial_{y}u\|^2_{L^{2}} + C\delta_1(\|\tau\|^2_{L^{2}}+\|\nabla\tau\|^2_{L^{2}}).\\
E'_{8}=F_7\leq&\frac{\varepsilon}{8}\|\nabla^2\tau\|^2_{L^{2}}+\frac{\mu}{8}\|\partial_{x}\partial_{y}u\|^2_{L^{2}}
+C(\|\partial_{x}u\|^2_{L^{2}}+\|\partial_{y}u\|^2_{L^{2}})(\|\bar{\eta}\|^4_{L^{2}} +\|\bar{\eta}\|^2_{L^{2}}\|\nabla\bar{\eta}\|^2_{L^{2}})\\
\leq&\,\frac{\varepsilon}{8}\|\nabla^2\tau\|^2_{L^{2}}+\frac{\mu}{8}\|\partial_{x}\partial_{y}u\|^2_{L^{2}} + C\delta_1(\|\bar{\eta}\|^2_{L^{2}}+\|\nabla\bar{\eta}\|^2_{L^{2}}).
\end{aligned}
\end{equation*}
For $E'_9$, we have
\begin{equation*}\label{}
\begin{aligned}
E'_9\leq C \|\nabla\bar{\eta}\|_{L^{2}}\|\nabla\bar{\eta}\|_{H^{1}} C\|\nabla u\|^2_{L^{2}} \leq \frac{\varepsilon}{2}\|\nabla\bar{\eta}\|^2_{H^{1}} + C\delta_1\|\nabla\bar{\eta}\|_{L^{2}}.
\end{aligned}
\end{equation*}
Combining the estimates of $E'_{1}$--$E'_{9}$ with \eqref{uxuy2}, it holds that
\begin{align} \label{eq_uetateu2}
&\frac{\mathrm{d}}{\mathrm{d}t}(\|\nabla\tau\|^{2}_{L^{2}}+\mu\|\partial_x u\|^{2}_{L^{2}}+2\|\partial_y u\|^{2}_{L^{2}}+\|\nabla\bar{\eta}\|^2_{L^{2}})+\|\partial_t u\|^{2}_{L^{2}} \notag \\
&+2\mu\|\partial_{x} \partial_{y}u\|^{2}_{L^{2}}+2\kappa\|\nabla\tau\|^{2}_{L^{2}}+\varepsilon\|\nabla^2\tau\|^{2}_{L^{2}}
+\varepsilon\|\nabla^2\bar{\eta}\|^2_{L^{2}} \notag \\
\leq &\,C\|\partial_{x}u\|^{2}_{L^{2}} + C\delta_1\|(\bar{\eta},\tau)\|_{H^1}^2.
\end{align}
Summing \eqref{utau1} and $\zeta\cdot\eqref{eq_uetateu2}$ up, and choosing $\delta_1$ and $\zeta>0$ sufficiently small, we get  
\begin{align}\label{eq_uetateu_H1}
&\frac{\mathrm{d}}{\mathrm{d}t}\Big(\|\tau\|^{2}_{L^{2}}+2\|u\|^{2}_{L^{2}}+\|\bar{\eta}\|^{2}_{L^{2}} +\zeta (\|\nabla\tau\|^{2}_{L^{2}}+\mu\|\partial_x u\|^{2}_{L^{2}}+2\|\partial_y u\|^{2}_{L^{2}}+\|\nabla\bar{\eta}\|^2_{L^{2}})\Big)\notag\\
&+\frac{1}{2}\Big(\kappa\|\tau\|^{2}_{L^{2}}
+\varepsilon\|\nabla\tau\|^{2}_{L^{2}}+2\mu\|\partial_x u\|^{2}_{L^{2}}+\varepsilon\|\nabla\bar{\eta}\|^{2}_{L^{2}}\Big)+\zeta\Big(\|\partial_t u\|^{2}_{L^{2}} +2\mu\|\partial_{x} \partial_{y}u\|^{2}_{L^{2}}\notag\\
&+2\kappa\|\nabla\tau\|^{2}_{L^{2}}+\varepsilon\|\nabla^2\tau\|^{2}_{L^{2}}
+\varepsilon\|\nabla^2\bar{\eta}\|^2_{L^{2}}\Big)  
\leq  0.
\end{align}
Integrating the above inequality with respect to $t$ over $(0,t)$, one can find a generic positive constant $C$ such that
\begin{align*}
&\|(u,\bar{\eta},\tau)\|^{2}_{H^{1}}+\int^t_0{\|(\bar{\eta},\tau)\|^{2}_{H^{2}}}\mathrm{d}s
+\int^t_0{\|\partial_t u\|^{2}_{L^{2}}}\mathrm{d}s
+\int^t_0{\|(u,\partial_x u,\partial_x \partial_y u)\|^{2}_{L^{2}}}\mathrm{d}s \\
\leq\,& C\|(u_0,\bar{\eta}_0,\tau_0)\|^{2}_{H^{1}}.
\end{align*}
Let $$C\|(u_0,\bar{\eta}_0,\tau_0)\|^{2}_{H^{1}}  =: \delta\leq \frac{1}{2}\delta_1.$$ 
The proof is complete.
\end{proof}
\subsection{The proof of Theorem \ref{thm5}.}
Together with Lemma \ref{lem6.1}, Poincar\'{e} inequality  and the method of continuity, we can complete the proof of Theorem \ref{thm5}.
Next, we will prove the decay estimates of $u$, $\bar{\eta}$ and $\tau$ and finish the proof of Theorem \ref{thm6}.
\subsection{The proof of Theorem \ref{thm6}.}
\begin{proof}
 With the help of Poincar\'{e} inequality and \eqref{eq_uetateu_H1}, one can find a positive constant $\nu$, such that  
\begin{align*}
&\frac{\mathrm{d}}{\mathrm{d}t}\Big(\|\tau\|^{2}_{L^{2}}+2\|u\|^{2}_{L^{2}}+\|\bar{\eta}\|^{2}_{L^{2}} +\zeta (\|\nabla\tau\|^{2}_{L^{2}}+\mu\|\partial_x u\|^{2}_{L^{2}}+2\|\partial_y u\|^{2}_{L^{2}}+\|\nabla\bar{\eta}\|^2_{L^{2}})\Big)\\
& +\nu \Big(\|\tau\|^{2}_{L^{2}}+2\|u\|^{2}_{L^{2}}+\|\bar{\eta}\|^{2}_{L^{2}} +\zeta (\|\nabla\tau\|^{2}_{L^{2}}+\mu\|\partial_x u\|^{2}_{L^{2}}+2\|\partial_y u\|^{2}_{L^{2}}+\|\nabla\bar{\eta}\|^2_{L^{2}})\Big) 
\leq  0,
\end{align*} 
which implies that  
\begin{equation*}
\begin{aligned}
&\|(u,\bar{\eta},\tau)\|_{H^1}\leq C\|(u_0,\bar{\eta},\tau_0)\|_{H^1}\mathrm{e}^{-\nu t},~~~\forall t\in (0,+\infty).
\end{aligned}
\end{equation*}
The proof of Theorem \ref{thm6} is complete.
\end{proof}

\section*{Acknowledgement}
The authors would like to thank Prof. Baishun Lai for valuable discussions and suggestions. The work of Y. H. Wang was partially supported by the National Natural Science Foundation of China grant 12401274 and the Natural Science Foundation of Hunan Province grant 2024JJ6302. The work of H. C. Yao was partially supported by the National Natural Science Foundation of China grant 12301273 and the Guangzhou Basic and Applied Basic Research Foundation grant 2024A04J4906.
%Wang was supported by the National Natural Science Foundation of China $\#$12401274 and the Natural Science Foundation of Hunan Province $\#$2024JJ6302.

\end{document}